\newcommand{\Ann}{\operatorname{Ann}}
\newcommand{\Pic}{\operatorname{Pic}}
\numberwithin{equation}{section}
\begin{document}

\newtheorem{teo}{Theorem}[section]
\newtheorem{lem}[teo]{Lemma}
\newtheorem{pro}[teo]{Proposition}
\newtheorem{cor}[teo]{Corollary}

\theoremstyle{definition}
\newtheorem{defi}[teo]{Definition}
\newtheorem{oss}[teo]{Remark}
\newtheorem{es}[teo]{Example}
\newtheorem{claim}[teo]{Claim}
\newtheorem{dom}[teo]{Questions}
\addcontentsline{}{}{}

\title{Cycles in Jacobians: infinitesimal results}
\author{Emanuele Raviolo}
\maketitle
\begin{abstract}
Let $C$ be a generic smooth curve of genus $g\geqslant 4$. 
We study normal functions and infinitesimal invariants associated to Ceresa cycles $W_{k}-W_{k}^{-}$, $k=2,\cdots,g-2$. 
We show how they can be obtained from the normal function associated to the basic cycle $C-C^{-}$ and, for
$k=2$, we also explicitely determine the zero locus of the infinitesimal invariant. 
For $C$ general hyperelliptic of genus $g=3$, we define the $K-$theoretic counterpart of $W_{2}-W_{2}^{-}$, 
generalizing a construction of A. Collino, and show that it is 
indecomposable. 
\end{abstract}
\section*{Introduction}
Let $C$ be a smooth projective curve of genus $g\geqslant3$. 
The $k-$fold symmetric product $C_{k}$ maps into the Jacobian $J(C)$ via the Abel-Jacobi map.
Let us call the image $W_{k}$, which is birational to $C_{k}$ for $k=1,\dots,g-2$.
We can consider the cycle-theoretic push-forward of $W_{k}$ under the involution $-1:J(C)\to J(C)$ of the Jacobian: $W_{k}^{-}=(-1)_{*}W_{k}$.
The difference $W_{k}-W_{k}^{-}$ is an homologically trivial cycle, since $(-1)_{*}$ acts as the identity on even homology.
It is natural to ask whether these cycles, called $\textit{Ceresa cycles}$, are algebraically trivial.
The answer is given by Ceresa theorem.
\begin{teo}[\cite{cer}] 
\label{c}   
If $C$ is general then $W_{k}-W_{k}^{-}$ is not algebraically trivial for $k=1,\dots,g$.
\end{teo}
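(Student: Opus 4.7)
The plan is to use the Griffiths Abel--Jacobi map and the theory of normal functions to produce a Hodge-theoretic obstruction to algebraic triviality, and then show the obstruction is nonzero for the generic curve. First I would consider the intermediate Jacobian $J^{2}(J(C))$ together with the Griffiths map $\Phi : A^{2}_{hom}(J(C)) \to J^{2}(J(C))$. The involution $-1$ on $J(C)$ decomposes $H^{3}(J(C),\mathbb{Z}) \cong \bigwedge^{3} H^{1}(C,\mathbb{Z})$ and therefore $J^{2}(J(C))$ into $\pm 1$ eigenparts, and the class $\Phi(W_{k}-W_{k}^{-})$ lies by construction in the $(-1)$-eigenpart. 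By Griffiths' theorem, if $W_{k}-W_{k}^{-}$ were algebraically trivial then this class would lie in the algebraic subtorus $J^{2}_{a}(J(C))$, whose tangent space is contained in a proper Hodge-theoretic subspace of $H^{1}(J(C), \Omega^{2})$. The strategy is therefore to produce a class in the quotient on which $\Phi(W_{k}-W_{k}^{-})$ projects nontrivially.

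Second, I would globalize over $M_{g}$. The construction of $W_{k}-W_{k}^{-}$ is functorial in $C$, so it defines a normal function $\nu_{k}$ on (a finite cover of) the moduli space, with values in the odd eigenbundle of the family of intermediate Jacobians. Its infinitesimal invariant $\delta\nu_{k}$ is a section of a sheaf built from the Hodge bundles, and Griffiths--Green show that if $\delta\nu_{k}(C) \neq 0$ modulo the fixed algebraic part, then $\Phi(W_{k}-W_{k}^{-})$ cannot lie in $J^{2}_{a}$ at $C$. Consequently nonvanishing of $\delta\nu_{k}$ at one point suffices to conclude for the generic $C$, by semicontinuity.

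The crux is therefore the explicit computation of $\delta\nu_{k}$. I would first treat the case $k=1$, which is the basic Ceresa cycle $C-C^{-}$, and reduce the infinitesimal invariant at $C$ to a symbol built from $H^{0}(C, K_{C})$ and the Kodaira--Spencer image in $H^{1}(C, T_{C})$, via the identification of $\bigwedge^{3} H^{1}(C)$ with cohomology of $J(C)$ and the description of the Abel--Jacobi differential as a cup product. One then exhibits a Kodaira--Spencer direction and holomorphic forms on which the resulting pairing is nonzero. The standard way to verify this is by degeneration to a tractable curve (for instance a curve with a node or a sufficiently general hyperelliptic specialization) where the limit mixed Hodge structure permits an explicit residue calculation; nonvanishing at the boundary point propagates to the generic $C$ by semicontinuity. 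For $k \geq 2$, I would deduce the result from the case $k=1$ by exploiting the Pontryagin product relation $[W_{k}] = \tfrac{1}{k!}[W_{1}]^{*k}$ in $H^{*}(J(C))$, which lifts to a relation between the corresponding normal functions and their infinitesimal invariants, so that nonvanishing for $k=1$ forces nonvanishing for the higher $W_{k}-W_{k}^{-}$.

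The main obstacle is the explicit degeneration calculation that shows $\delta\nu_{1} \neq 0$ at the chosen special curve: one has to control the limit of Abel--Jacobi classes under Clemens--Schmid and check that the resulting symbol in $\bigwedge^{3} H^{1}$ is not a decomposable element lying in the algebraic part. Everything else in the argument is formal Hodge theory once this explicit nonvanishing is established.
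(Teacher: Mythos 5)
Your architecture for $k\geqslant2$ --- lift the Pontryagin--product identity $C^{*k}=k!\,W_{k}$ to a relation between the normal functions $\nu_{k}$ and $\nu_{1}$ via multiplication by $\theta^{k-1}$, pass to infinitesimal invariants, and conclude from the nonvanishing of $\delta\nu_{1}$ --- is exactly the route of Theorem \ref{comparison} and Proposition \ref{i.i.comp}. Where you differ is the base case $k=1$. The paper does not reprove it by degeneration: it quotes \cite{cp}, where $\delta\nu_{1}(0)$ is computed directly at a \emph{general} curve on decomposable primitive elements $\omega_{1}\wedge\omega_{2}\wedge\overline{\omega}_{3}\otimes\xi$ using rank-one (Schiffer-type) variations and the adjoint form; no limit mixed Hodge structure or Clemens--Schmid analysis enters. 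Your degeneration plan for $k=1$ is essentially Ceresa's original argument \cite{cer}; it can be made to work, but it is the harder and least self-contained part of your proposal, and the direct infinitesimal computation at a general curve is what the paper relies on.

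Three concrete cautions. First, degenerating to a general hyperelliptic curve to verify $\delta\nu_{1}\neq0$ is a trap: on the hyperelliptic locus $C-C^{-}$ is algebraically trivial, so the Griffiths obstruction vanishes identically there, and one must either differentiate transversally to the locus or replace the cycle by Collino's $K$-theoretic substitute \cite{c97} --- this is precisely the content of Section 4 of the paper, not a shortcut. A nodal degeneration is what Ceresa actually uses. Second, your appeal to the $\pm1$-eigenspace decomposition of $H^{3}(J(C))\cong\bigwedge^{3}H^{1}$ under $(-1)^{*}$ carries no information: $(-1)^{*}$ acts as $-\mathrm{id}$ on all odd cohomology, so the whole intermediate Jacobian is anti-invariant; the decomposition that isolates the algebraic part is the Lefschetz primitive decomposition $\theta H^{1}\oplus P^{3}$ (equivalently, Hain's decomposition into irreducible symplectic summands), and the nonvanishing must be tested against the primitive piece. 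Third, the reduction from $k$ to $1$ requires the constant $C(g,k)$ to be nonzero (for $k=2$ it is proportional to $g-3$, so the argument starts at $g\geqslant4$), and the Pontryagin identity only yields the statement for $k\leqslant g-2$; for $k=g-1$ and $k=g$ the cycles $W_{k}^{-}$ are translates of $W_{k}$ and the difference is algebraically trivial, so the range $k=1,\dots,g$ in the statement as printed cannot be reached by your argument or any other (it is a slip for $k=1,\dots,g-2$).
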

These cycles appear in different contexts.
From the point of view of algebraic cycles, an important consequence of Theorem \ref{c} is the construction of A. Beauville \cite{bea} of a subring, 
called \textit{tautological}, of the Chow ring modulo algebraic equivalence of the generic Jacobian, which has been extensively studied by Beauville himself 
and other authors.
We also mention the results of R. Hain \cite{H} relating the cycle $C-C^{-}$ to the geometry of the mapping class group.

 The original proof of Ceresa was by degeneration.
There are now many different proofs of Ceresa theorem in the case $k=1$, i.e. for the \textit{basic cycle} $C-C^{-}$.
One particularly interesting is the one given by A. Collino and G.P. Pirola \cite{cp}.
They studied the normal function $\nu$ associated to the basic cycle and computed its infinitesimal invariant $\delta\nu$.
From the non triviality of $\delta\nu$, they were able to deduce that $C-C^{-}$ is not algebraically trivial.
They also proved that, if $g=3$, then the infinitesimal invariant $\delta\nu$ gives the equation of the canonical curve $C\subset\mathbb{P}^{2}$. 
We also remind the proof of B. Harris \cite{har} which uses harmonic volumes.
His work was then completed by W. Faucette \cite{f}, who extended Harris' results to all Ceresa cycles.

We present here some results on the normal functions $\nu_{k}$ and infinitesimal invariants $\delta\nu_{k}$ 
associated to $W_{k}-W_{k}^{-}$, $k\geqslant2$, for a curve of genus $g\geqslant4$.
Our study of the normal functions $\nu_{k}$ clarifies what is the relation between $\nu$ and $\nu_{k}$.
To be more concrete suppose that $\mathcal{C}\to B$ is a family of smooth curves of genus $g$. 
If we denote by $\mathcal{J}^{g-k}\to B$ the family of intermediate Jacobians we get normal functions $\nu_{k}\in H^{0}(B,\mathcal{J}^{g-k})$, with $\nu_{1}=\nu$ 
(see Section \ref{ht} for the basic definitions).
Then we prove
\begin{teo} \label{1}
\begin{enumerate}
  \item $\theta^{k-1}_{J(C_{t})}\nu_{k}(t)=C(g,k)\nu_{1}(t)$, where $\theta_{J(C_{t})}$ is the class of the divisor of $J(C_{t})$ and $C(g,k)$ is a rational 
  constant depending only on $g$ and $k$.
  \item According to the Lefschetz decomposition on cohomology, the normal functions decomposes as $\nu_{k}=\nu_{k}^{(0)}+\cdots+\nu_{k}^{(k)}$.
  Then, if $g\geqslant5$, $\nu_{k}^{(i)}=0$ for $i=0,\dots,k-2$.
\item $\delta\nu_{k}(t)\circ(\theta^{k-1}\otimes\textrm{Id})=C(g,k)\delta\nu_{1}(t)$.
In particular $W_{k}-W_{k}^{-}$ is not algebraically trivial for $k=2,\dots,g-2$.
\end{enumerate}
\end{teo}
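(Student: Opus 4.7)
The three assertions are intertwined: Part (1) is a cycle/normal-function identity, Part (3) is its infinitesimal shadow, and Part (2) refines how $\nu_k$ sits inside the Lefschetz decomposition. My plan is to prove Part (1) first, deduce Part (3) from it, and then treat Part (2) separately.

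For Part (1), I would begin from Poincar\'e's formula $[W_k]=\theta^{g-k}/(g-k)!$ in $H^{*}(J(C_t),\mathbb{Q})$ together with the iterative identity $\theta\cdot [W_j]=(g-j+1)[W_{j-1}]$, which compose to give $\theta^{k-1}\cdot[W_k]=\frac{(g-1)!}{(g-k)!}[W_1]$. The analogous identity modulo algebraic equivalence holds inside Beauville's tautological ring of the generic Jacobian, yielding $\theta^{k-1}(W_k-W_k^{-})\equiv\frac{(g-1)!}{(g-k)!}(C-C^{-})$ and forcing the constant to be $C(g,k)=(g-1)!/(g-k)!$. To upgrade this to an identity of normal functions I would invoke the compatibility of the Griffiths Abel--Jacobi map with intersection by a divisor class: taking a chain $\Gamma$ with $\partial\Gamma=W_k-W_k^{-}$ and a smooth representative $\Theta$ of the theta class, the chain $\Gamma\cap\Theta^{k-1}$ bounds $\theta^{k-1}(W_k-W_k^{-})$, and integrating a test form over it computes both $\theta^{k-1}\cdot\nu_k$ (the Lefschetz action on the intermediate Jacobian) and the normal function of the intersection cycle. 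Part (3) is then essentially immediate: the infinitesimal invariant is a Koszul-type derivative of the normal function, so the Lefschetz action on the target of $\nu_k$ dualizes to post-composition with $\theta^{k-1}\otimes\textrm{Id}$ on the source of $\delta\nu_k$; differentiating Part (1) gives the desired formula, and combining with the non-vanishing $\delta\nu_1\ne 0$ of \cite{cp} proves that $W_k-W_k^{-}$ is not algebraically trivial.

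Part (2) is the main obstacle. The Lefschetz decomposition of $H^{2g-2k-1}(J(C_t),\mathbb{Q})$ splits the intermediate Jacobian $J^{g-k}(J(C_t))$ and induces the decomposition $\nu_k=\nu_k^{(0)}+\cdots+\nu_k^{(k)}$. Part (1) already identifies $\nu_k^{(k-1)}$, up to a scalar, with $\nu_1$, while $\nu_k^{(k)}$ is the genuinely new component; it remains to show that the remaining components vanish for $g\ge 5$. I would approach this by projecting $\delta\nu_k$ onto each primitive Lefschetz summand and, using the explicit realisation of these summands as primitive subspaces of $\wedge^{2g-2k-1}H^1(C)$, reduce the vanishing of each $\nu_k^{(i)}$ with $i\le k-2$ to the vanishing of the corresponding primitive Koszul cohomology group of the curve. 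The hypothesis $g\ge 5$ should enter precisely as the non-degeneracy condition needed for such a Koszul vanishing to hold. The technically most demanding step, and the one where I would expect to spend most of the effort, is to write the primitive projections of $\delta\nu_k$ in a sufficiently explicit form for the vanishing theorem to apply.
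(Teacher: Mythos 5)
Your Part (3) is essentially the paper's argument (the class $\mathbf{\Theta}$ is flat for the Gauss--Manin connection, so the Leibniz rule lets one differentiate the identity of Part (1), and non-triviality then follows from Collino--Pirola), so that step is fine \emph{once Part (1) is established}. The genuine gap is in Part (1). The statement there is an identity of Abel--Jacobi images, i.e.\ of the functionals $\sigma\mapsto\int_{\Delta_{k}}\theta^{k-1}\wedge\sigma$ on $F^{2}H^{3}(J(C))$, and this is strictly finer than any identity of cohomology classes or of classes modulo algebraic equivalence. Your intersection step only yields $\theta^{k-1}AJ_{k}(W_{k}-W_{k}^{-})=AJ_{1}\bigl(\Theta^{k-1}\cdot(W_{k}-W_{k}^{-})\bigr)$; the entire content of the theorem is to identify the right-hand side with a multiple of $AJ_{1}(C-C^{-})$, and the two $1$-cycles $\Theta^{k-1}\cdot(W_{k}-W_{k}^{-})$ and $\frac{(g-1)!}{(g-k)!}(C-C^{-})$ are merely homologous, so their Abel--Jacobi images need not agree (nor does a relation modulo algebraic equivalence pin down the transcendental Abel--Jacobi image -- and Poincar\'e's formula itself fails modulo algebraic equivalence, which is exactly the phenomenon Ceresa's theorem is about). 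The idea your proposal is missing is the Pontryagin-product identity $(C-C^{-})*\Gamma_{k}=k!\,(W_{k}-W_{k}^{-})$ with $\Gamma_{k}=\sum_{i=0}^{k-1}C^{*i}*(C^{*(k-i-1)})^{-}$: this produces an \emph{explicit} $(2k+1)$-chain $\Delta*\Gamma_{k}=\mu_{*}(\Delta\times\Gamma_{k})$ bounding $k!\,(W_{k}-W_{k}^{-})$, and the integral $\int_{\Delta*\Gamma_{k}}\theta^{k-1}\wedge\sigma$ is then computed by pulling back along the sum map $\mu$ and integrating out the $\Gamma_{k}$ factor. This computation gives $C(g,2)=g-3$, not the value $g-1$ predicted by your cohomological bookkeeping -- a concrete sign that the bookkeeping is not computing the normal function.

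Part (2) has a second gap of a different nature: the assertion is $\nu_{k}^{(i)}=0$, not $\delta\nu_{k}^{(i)}=0$, and vanishing of an infinitesimal invariant never implies vanishing of the normal function itself, so a Koszul-vanishing argument applied to the primitive projections of $\delta\nu_{k}$ cannot prove the statement as written. In the paper the vanishing is proved pointwise, for each single curve, and again via the Pontryagin product: since $k!\,AJ_{k}(W_{k}-W_{k}^{-})=AJ_{1}(C-C^{-})*\Gamma_{k}$ and $\Gamma_{k}$ is homologous to a multiple of $C^{*(k-1)}$, the Abel--Jacobi image lies in $[C^{*(k-1)}]*H_{3}(J(C))$, which under Poincar\'e duality and the Poincar\'e formula is $\theta^{k-1}\wedge H^{3}=\theta^{k-1}P^{3}\oplus\theta^{k}H^{1}$; hence only the top two Lefschetz components of $\nu_{k}$ can survive, and all $\nu_{k}^{(i)}$ with $i\leqslant k-2$ vanish. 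No vanishing theorem for Koszul cohomology is involved, and the hypothesis $g\geqslant5$ does not enter where you expect it to.
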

We then specialize to the normal function $\nu_{2}$.
We are able to determine the zero locus of a section of a certain vector bundle naturally attached to the infinitesimal invariant.
Instead of using the formula in the above theorem, we use a machinery due to G.P. Pirola and F. Zucconi \cite{pz}.
Their work enable us to compute the infinitesimal invariant $\delta\nu_{2}(t)$ on some decomposable tensors in $H^{3,2}(J(C_{t}))\otimes H^{1}(C_{t},T_{C_{t}})$.
\\
To understand the geometry behind $\delta\nu_{2}(0)$, let us identify the curve with its canonical image in the canonical space $\mathbb{P}^{g-1}$.
consider the Grassmanian \mbox{$\mathbb{G}(g-4,\mathbb{P}^{g-1})$.}
We consider the subscheme $\mathcal{D}$ of the Grassmanian $\mathbb{G}(g-4,\mathbb{P}^{g-1})$ made of the linear $(g-4)$-spaces intersecting $C$.
The infinitesimal variations of Hodge structures of the second symmetric product $C_{2}$ produce a variety $\Gamma$
contained in $\mathbb{P}H^{1}(C,T_{C})\times\mathbb{G}(g-4,\mathbb{P}^{g-1})$.
The image of $\Gamma$ under the second projection $\pi_{2}$ contains $\mathcal{D}$. 
Moreover the infinitesimal invariant $\delta\nu_{2}(0)$ defines in a natural way a section $\alpha$ of a vector bundle on $\Gamma$.
Let us call $Z$ its zero locus.
Then we prove the following. 
\begin{teo}\label{2}  
$\pi_{2}(Z)=\mathcal{D}$.
In particular the infinitesimal invariant $\delta\nu_{2}(0)$ reconstructs the curve $C$. 
\end{teo}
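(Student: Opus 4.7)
The plan is to prove the two inclusions $\mathcal{D}\subset\pi_{2}(Z)$ and $\pi_{2}(Z)\subset\mathcal{D}$ separately. Both rely on combining the Pirola--Zucconi formula for $\delta\nu_{2}(0)$ on decomposable tensors with Theorem~\ref{1}(3), which converts a statement about $\delta\nu_{2}$ into a statement about $\delta\nu_{1}$ via cup-product with $\theta$. Under the canonical embedding, a point $\Lambda\in\mathbb{G}(g-4,\mathbb{P}^{g-1})$ corresponds to a three-dimensional subspace $U=\langle\omega_{1},\omega_{2},\omega_{3}\rangle\subset H^{0}(C,K_{C})$, and $\Lambda\in\mathcal{D}$ exactly when the $\omega_{i}$ share a common zero $p\in C$. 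I would first identify the fibre of the bundle on $\Gamma$ carrying $\alpha$ with the natural receptacle of the Pirola--Zucconi pairing evaluated on a decomposable tensor built from $\omega_{1}\wedge\omega_{2}\wedge\omega_{3}$ and the deformation direction $\xi$.

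For $\mathcal{D}\subset\pi_{2}(Z)$, fix $\Lambda\in\mathcal{D}$ and a common zero $p$ of the $\omega_{i}$. A natural candidate for a preimage in $Z$ is a Schiffer-type variation $\xi_{p}\in H^{1}(C,T_{C})$ supported at $p$. Checking $(\xi_{p},\Lambda)\in\Gamma$ is a direct compatibility with the IVHS of $C_{2}$ already set up in the construction of $\Gamma$. Then the Pirola--Zucconi formula expresses $\alpha(\xi_{p},\Lambda)$ as a residue at $p$ in which every summand carries a factor $\omega_{i}(p)=0$; hence $\alpha(\xi_{p},\Lambda)=0$ and $(\xi_{p},\Lambda)\in Z$.

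For the reverse inclusion, assume $(\xi,\Lambda)\in Z$ and argue by contradiction that $\omega_{1},\omega_{2},\omega_{3}$ have no common zero. The vanishing of $\alpha(\xi,\Lambda)$ is a condition on $\delta\nu_{2}(\xi)$; applying Theorem~\ref{1}(3), that is, contracting with $\theta$, converts it into the vanishing of $\delta\nu_{1}(\xi)$ on a related tensor of the form $(\omega_{i}\wedge\omega_{j})\otimes\omega_{k}\otimes\xi$. A Koszul-type argument, in the spirit of the Collino--Pirola genus-$3$ reconstruction of $C$ from $\delta\nu_{1}$, then forces $\omega_{1},\omega_{2},\omega_{3}$ to possess a common zero on $C$, contradicting $\Lambda\notin\mathcal{D}$.

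The main obstacle is this last step: promoting the genus-$3$ reconstruction theorem for $\delta\nu_{1}$ to a three-section statement in higher genus, and checking that the $\theta$-contraction does not erase the geometric information about $p$. I expect this to reduce to a base-point-free pencil trick or a Castelnuovo-style lemma applied to $U$, together with the fact that for $g\geq 4$ the canonical system is very ample enough that three generic sections encode a point of $C$ precisely when they have a common zero.
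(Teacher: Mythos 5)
Your overall plan (two inclusions, Schiffer variations for $\mathcal{D}\subset\pi_{2}(Z)$) matches the paper's skeleton, but both halves contain genuine gaps. For the inclusion $\mathcal{D}\subset\pi_{2}(Z)$: the Pirola--Zucconi formula gives $\delta\nu_{2}(0)$ as $2\int_{C_{2}}\omega_{\xi,W}\wedge\overline{\sigma}$, where $\omega_{\xi,W}$ is the adjoint form, a global holomorphic $2$-form on the surface $C_{2}$ taken modulo $W^{2}=\bigwedge^{2}W$. There is no residue-at-$p$ expression in which "every summand carries a factor $\omega_{i}(p)$"; the vanishing $\omega_{i}(p)=0$ only tells you that $C_{p}$ lies in the base locus of $|W^{2}|$, and for a surface the vanishing of the adjoint class does \emph{not} follow formally from the presence of a fixed divisor (Remark \ref{curves} records that this implication is special to curves). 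Proving $\omega_{\theta_{p},W}=0$ is the technical core of the paper (Proposition \ref{vanish}): one must choose compatible liftings so that the pairwise adjoint classes on $C$ vanish identically (Lemma \ref{tech}, which already uses that the generic canonical curve lies on no rank-$3$ quadric), decompose the representative as $\Upsilon+\Phi+\Psi+\Gamma$ with respect to a basis adapted to $H^{0}(\omega_{C}(-p))$, and kill each piece by restricting to well-chosen curves $C_{p_{i}}$, $C_{q_{j}}$. None of this is captured by your sketch.

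The reverse inclusion is where your route actually breaks. You propose to contract with $\theta$ and invoke Theorem \ref{1}(3) to reduce the vanishing of $\alpha(\xi,\Lambda)$ to a statement about $\delta\nu_{1}$. But the section $\alpha$ is built from $\delta\nu_{2}(0)$ evaluated on tensors $\omega_{1}\wedge\omega_{2}\wedge\omega_{3}\wedge\overline{\sigma}_{1}\wedge\overline{\sigma}_{2}\otimes\xi$ which are \emph{not} in general of the form $\theta\wedge\tau_{1}\wedge\tau_{2}\wedge\overline{\eta}\otimes\xi$; the identity $\delta\nu_{2}(0)\circ(\theta\otimes\mathrm{Id})=C(g,2)\,\delta\nu_{1}(0)$ only constrains $\delta\nu_{2}$ on the image of $\theta\otimes\mathrm{Id}$ and says nothing about the tensors defining $Z$. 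This is precisely the point the paper stresses at the start of Section \ref{i.i.}: the zero locus of $\delta\nu_{2}(0)$ carries information invisible to $\delta\nu_{1}$, so the reduction you propose cannot detect it, and the "three-section Koszul" step you flag as the main obstacle is not a technicality to be filled in but the place where the strategy fails. The paper argues instead entirely on $C_{2}$: if $W$ has no base point then (for generic $C$, where $\phi_{V}$ is birational onto its image) the base locus $B_{W^{2}}$ is finite (Proposition \ref{baselocus}), so the fixed divisor of $|W^{2}|$ is empty, and Theorem \ref{imp}(2) forces $[\omega_{\xi,W}]\neq0$ for every $\xi\in\pi_{2}^{-1}(W)$, whence $(\xi,W)\notin Z$.
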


We then consider an hyperelliptic curve $C$ of genus $g$. 
It is well-known that $W_{k}^{-}$ is a translate of $W_{k}$ (because the hyperelliptic involution induces $-1$ on $J(C)$) and then $W_{k}-W_{k}^{-}$
is algebraically trivial.

Nevertheless, A. Collino \cite{c} constructed a cycle $Z(C)$ in the higher Chow group $CH^{g}(J(C),1)$ and proved that it is indecomposable. 
As Collino explains in his paper, this cycle is a sort of substitute of $C-C^{-}$ and in fact the behaviour of these two cycles,
 from an infinitesimal point of view, is quite similar.

If $X$ is a smooth projective variety, it is considerably hard to construct indecomposable higher Chow cycles in $CH^{p}(X,1)$ for $p<\dim(X)$. 
Motivated by the above results, we are able to provide examples of indecomposable higher Chow cycles in $CH^{2}(J(C),1)$ when $g=3$. 
We consider, in fact, certain cycles $Z_{2}(C)\in CH^{2}(J(C),1)$ defined using the symmetric product $C_{2}$.
This is the natural substitutes of $W_{2}-W_{2}^{-}$.
As in the case of Ceresa cycles, we relate the associated infinitesimal invariant to that of Collino cycle and prove
\begin{teo}\label{3}
The cycle $Z_{2}(C)$ is indecomposable for the generic hyperelliptic curve of \mbox{genus $3$.}
\end{teo}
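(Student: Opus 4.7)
The plan is to transpose the strategy of Theorem \ref{1}(iii) into the higher Chow setting. Placing $C$ in a one-parameter family $\mathcal{C} \to B$ of hyperelliptic curves of genus $3$, one spreads $Z_2(C)$ out to a relative higher Chow cycle $\mathcal{Z}_2$ on the relative Jacobian, and associates to it a normal function $\nu_2 \in H^0(B,\mathcal{J}^2)$ with values in the intermediate Jacobian bundle attached to $H^3$ of the Jacobian. The principle to exploit is that decomposable classes in $CH^2(J(C),1)$, coming from $\mathrm{Pic}(J(C)) \otimes \mathbb{C}^{*}$, give rise to normal functions whose infinitesimal invariants vanish identically; hence it suffices to show that $\delta\nu_2(0) \neq 0$.

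First, I would recall Collino's cycle $Z(C) \in CH^3(J(C),1)$ together with the fact, proved in \cite{c}, that its infinitesimal invariant $\delta\eta$ is non-zero at a generic hyperelliptic curve of genus $3$; this will play the role of $\delta\nu_1$ in the Ceresa case. Next I would give an explicit construction of $Z_2(C)$ modelled on Collino's recipe, using $W_2 \subset J(C)$ and translates together with a rational function adapted to the hyperelliptic involution, and check that it defines a well-defined class in $CH^2(J(C),1)$ via the Gersten--Milnor complex.

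The heart of the argument would be a Lefschetz-type identity
\[
\delta\nu_2(t) \circ \bigl(\theta_{J(C_t)} \otimes \mathrm{Id}\bigr) = C \cdot \delta\eta(t),
\]
with $C$ a non-zero rational constant, in complete parallel with the third assertion of Theorem \ref{1}. The derivation should follow the same lines: Poincaré's formula expressing $[W_k]$ as a multiple of $\theta^{g-k}$, compatibility of the regulator into Deligne cohomology with cup product, and Pirola--Zucconi's computation \cite{pz} of $\delta\nu_2$ on decomposable tensors. Evaluating at $t = 0$ and invoking the non-vanishing of $\delta\eta$ then yields $\delta\nu_2(0) \neq 0$, and therefore indecomposability of $Z_2(C)$.

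The main obstacle I anticipate is the Lefschetz-type identity above. Unlike the Ceresa setting, where the argument takes place entirely in the singular cohomology of the Jacobian, here one must work with regulators into Deligne cohomology and track the contribution of the rational-function part of a higher Chow cycle to the infinitesimal invariant; this bookkeeping is what pins down the constant $C$ and guarantees that no unwanted boundary terms appear. A secondary delicate point is verifying that decomposable higher Chow cycles in the variational family produce normal functions with vanishing infinitesimal invariants, which should follow from the fact that such cycles extend as decomposables across the family, but requires an explicit variational verification.
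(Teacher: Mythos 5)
Your overall strategy is the one the paper follows: spread $Z_{2}$ out over a family of hyperelliptic Jacobians, prove a Lefschetz-type identity relating its infinitesimal invariant to that of Collino's cycle via multiplication by $\theta^{k-1}$, and then quote Collino's non-vanishing result. The ingredients you list for the comparison identity are essentially the right ones; in the paper the identity is obtained quite concretely from Collino's integral formula for $\delta\tau_{1}$, the observation that $d\log F_{k}=k\,d\log F$ (whence the factor $k$), the identification of the integrand with the adjoint form of \cite{pz}, and Poincar\'e's formula to convert integrals over translates of $W_{k}$ into integrals over $J(C)$ against $\theta^{g-1}$; the constant comes out as $\frac{2k(g-1)!}{(2\pi i)^{k}(g-k)!}$. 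A small slip: the regulator of a class in $CH^{2}(J(C),1)$ lands in $KJ^{2}=H^{2}/(F^{2}H^{2}+H^{2}(\mathbb{Z}(2)))$, built from $H^{2}$ of the Jacobian, not from $H^{3}$.

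The genuine gap is in your reduction of indecomposability to $\delta\nu_{2}(0)\neq0$. The ``principle'' that decomposable classes give normal functions with identically vanishing infinitesimal invariants is false: a family $(D_{t},c(t))\in\mathrm{Pic}(J(C_{t}))\otimes\mathbb{C}^{*}$ with varying $c(t)$ has regulator $\log c(t)\cdot[D_{t}]$, whose Gauss--Manin derivative $\frac{c'}{c}[D_{t}]\,dt$ is a Hodge-class-valued form that need not lie in the image of $\nabla$ (pairing it with $\theta^{2}\otimes\zeta$ already gives a non-zero answer), and in any case fiberwise decomposability does not force the decomposition to vary holomorphically. What is true, and what the paper uses (Lemma \ref{indec}), is that the regulator of a decomposable class, viewed as a functional, is supported on the Hodge group, so indecomposability follows from non-vanishing on a test class \emph{orthogonal to the Hodge group}. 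This is a serious constraint here, because the only classes on which the comparison identity lets you evaluate $\delta\tau_{2}(0)$ are of the form $\theta\wedge\omega\wedge\mu$, which contain a factor of $\theta$ and are a priori dangerously close to the Hodge classes. The paper closes this by quoting Pirola's theorem that the N\'eron--Severi group of the generic hyperelliptic Jacobian is $\mathbb{Z}\theta$ \cite{pir} and the Moonen--Zarhin result that codimension-two Hodge classes on an abelian threefold are generated by divisor classes \cite{mz}, so that the relevant Hodge group is $\mathbb{Z}\theta^{2}$ and the test class is orthogonal to it. This step is where the genericity hypothesis and the restriction to $g=3$ actually enter the proof; without it your argument only shows that the normal function is non-torsion, not that $Z_{2}(C)$ is indecomposable.
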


The paper is organized as follows. 
The first section contains a collection of two basic notions from Hodge theory: normal functions and infinitesimal invariants.
In section \ref{ceresa} we prove our results on the normal functions $\nu_{k}$. 
Section \ref{i.i.} is devoted to the study of the zero locus of $\delta\nu_{2}$.
In the last section ,we briefly introduce higher Chow groups, the regulator map, and then we prove Theorem \ref{3}.
\section{Basic notions of Hodge theory}
\subsection{Normal functions and infinitesimal invariants}
\label{ht}
We recall some basic facts from Hodge theory. The reader is referred to \cite[ch. 7]{voi2} for details.
\\
Let $\pi:\mathcal{X}\rightarrow B$ be a smooth projective morphism between smooth complex manifolds. There are holomorphic
 vector bundles $\mathcal{H}^{k}$ over $B$ with fibers $\mathcal{H}^{k}_{t}=H^{k}(X_{t},\mathbb{C})$. For fixed $k$ we also have holomorphic subbundles
 $\mathcal{F}^{i}\mathcal{H}^{k}\subset\mathcal{H}^{k}$ with fiber $(\mathcal{F}^{i}\mathcal{H}^{k})_{t}=F^{i}H^{k}(X_{t},\mathbb{C})$, $F$ being the Hodge filtration on the cohomology groups. We also recall that the Gauss-Manin connection $\nabla:\mathcal{H}^{k}\rightarrow\mathcal{H}^{k}\otimes\Omega^{1}_{B}$ 
 is flat, i.e $\nabla^{2}=0$, and moreover  
 \begin{equation}
\label{gm}
\nabla(\mathcal{F}^{i}\mathcal{H}^{k})\subset\mathcal{F}^{i-1}\mathcal{H}^{k}\otimes\Omega^{1}_{B}
\end{equation}
 Setting $\mathcal{J}^{p}(\mathcal{X})=\frac{\displaystyle\mathcal{H}^{2p-1}}{\displaystyle\mathcal{F}^{p}\mathcal{H}^{2p-1}+R^{2p-1}\pi_{*}\mathbb{Z}}$ we get a fibration 
 in complex tori $j(\pi):\mathcal{J}^{p}(\mathcal{X})\rightarrow B$, with $\mathcal{J}^{p}_{t}=J^{p}(X_{t})$ the \textit{p-th intermediate jacobian} of $X_{t}$.
 
 Let us denote by $\mathcal{Z}^{p}_{hom}(\mathcal{X}/B)$ the group of codimension $p$ cycles $\mathcal{Z}\subset\mathcal{X}$ 
 with support flat over $B$ and such that $Z_{b}=\mathcal{Z}\cdot X_{b}$ is homologous to zero in $X_{b}$. 
 An element $\mathcal{Z}\in\mathcal{Z}^{p}_{hom}(\mathcal{X}/B)$ induces a section $H^{0}(B,\mathcal{J}^{p})$ by the rule $\nu(t)=AJ_{X_{t}}(Z_{t})$.
 It is well-known \cite[prop. 7.19]{voi2} that $\nu$ satisfies the \textit{transversality property}: 
for every holomorphic local lifting $\tilde{\nu}:U\rightarrow\mathcal{H}^{2p-1}$, $\nabla\tilde{\nu}\in\mathcal{F}^{p-1}\mathcal{H}^{2p-1}\otimes\Omega^{1}_{U}$.
This means that $\nu$ is a \textit{normal function}.
 
 Thanks to (\ref{gm}) the Gauss-Manin induces
 \[
 \nabla^{p-1}_{p}: \mathcal{H}^{p,p-1}\rightarrow\mathcal{H}^{p-1,p}\otimes\Omega^{1}_{B}
 \]
where $\mathcal{H}^{p,p-1}=\frac{\displaystyle\mathcal{F}^{p}\mathcal{H}^{2p-1}}{\displaystyle\mathcal{F}^{p+1}\mathcal{H}^{2p-1}}$ and
$\mathcal{H}^{p-1,p}=\frac{\displaystyle\mathcal{F}^{p-1}\mathcal{H}^{2p-1}}{\displaystyle\mathcal{F}^{p}\mathcal{H}^{2p-1}}$.

The class $[\nabla\tilde{\nu}]\in\textrm{coker}\nabla_{p}^{p-1}$ is independent of the chosen lifting. 
It is called the \textit{infinitesimal invariant} of $\nu$ and it is denoted by $\delta\nu$ \cite{g}. 
Under the isomorphism
\begin{equation}
\label{dual}
\textrm{coker}\nabla_{p}^{p-1}\cong\big(\ker\big((\nabla_{p}^{p-1})^{t}:\mathcal{H}^{n-p+1,n-p}\otimes T_{B}\rightarrow\mathcal{H}^{n-p,n-p+1}\big)\big)^{*}
\end{equation}
 we can then consider $\delta\nu$ as an element of the last vector bundle. 
 More precisely, if $\sum_{i}\Omega_{i}\otimes v_{i}\in\ker\nabla_{p}^{p-1}(0)^{t}$, 
 we have 
 \begin{equation}
 \label{def}
 \delta\nu(0)(\sum_{i}\Omega_{i}\otimes v_{i})=\sum_{i}\langle\nabla_{v_{i}}\tilde{\nu},\Omega_{i}\rangle=\sum_{i}\int_{X}\nabla_{v_{i}}\widetilde{\nu}\wedge\Omega_{i}
 \end{equation}
 (cf. \cite{grif}).
\subsection{Adjoint class}\label{adjunction}
Let $X$ be a smooth projective $n$-dimensional variety and $\mathcal{F}$ a locally free sheaf of rank $n$.
Take a non-zero element $\xi\in Ext^{1}(\mathcal{F},\mathcal{O}_{X})$. It is the class of a short exact sequence 
\[
0\rightarrow\mathcal{O}_{X}\rightarrow\mathcal{E}\rightarrow\mathcal{F}\rightarrow0. 
\]
By abuse of notation we also call $\xi$ the element of $H^{1}(X,\mathcal{F}^{*})$ corresponding to the extension class 
under the isomorphism $ Ext^{1}(\mathcal{F},\mathcal{O}_{X})\cong H^{1}(X,\mathcal{F}^{*})$.
Then the coboundary map $\partial_{\xi}$ in the above sequence is given by cupping with $\xi$:
\[
\begin{array}{lcll}
\partial_{\xi}: & H^{0}(X,\mathcal{F}) & \rightarrow  & H^{1}(X,\mathcal{O}_{X})      \\
& \omega & \mapsto & \xi\cdot\omega        
\end{array}.
\]
To simplify notations, we will denote the homomorphism $\partial_{\xi}$ simply by $\xi$.

Let us assume now that $\dim(\ker\xi)\geqslant n+1$, and that $W\subset\ker\xi$ has dimension $n+1$. 
We fix a basis $\mathcal{B}=\{\omega_{1},\dots,\omega_{n+1}\}$ of $W$.
Then there exist liftings $\widetilde{\omega}_{i}\in H^{0}(X,\mathcal{E})$ of $\omega_{i}$ for $i=1,\dots,n+1$. 
Let us consider the natural map
\[
\Lambda:\bigwedge^{n+1}H^{0}(X,\mathcal{E})\rightarrow H^{0}(X,\bigwedge^{n+1}\mathcal{E})\cong H^{0}(X,\bigwedge^{n}\mathcal{F}).
\]
We define
\[
\omega_{\xi,W,\mathcal{B}}=\Lambda(\widetilde{\omega}_{1}\wedge\cdots\wedge \widetilde{\omega}_{n+1})\in H^{0}(X,\bigwedge^{n}\mathcal{F}).
\]
The form $\omega_{\xi,W,\mathcal{B}}$ depends on two choices: the base $\mathcal{B}$ and the liftings $\widetilde{\omega}_{1},\dots,\widetilde{\omega}_{n+1}$.
If we take another basis $\mathcal{B}^{\prime}$ of $W$, then $\omega_{\xi,W,\mathcal{B}}=\lambda\omega_{\xi,W,\mathcal{B}^{\prime}}$, 
where $\lambda$ is the determinant of the base change matrix.
If we change instead the liftings $\widetilde{\omega}_{1},\dots,\widetilde{\omega}_{n+1}$ then one can check that $\omega_{\xi,W,\mathcal{B}}$ 
changes by an element of $W^{n}:=\Lambda(W)$. 

The class $[\omega_{\xi,W,\mathcal{B}}]\in H^{0}(X,\bigwedge^{n}\mathcal{F})/W^{n}$ is then well-defined.
\begin{defi}[\cite{pz}]
The element $[\omega_{\xi,W,\mathcal{B}}]\in H^{0}(X,\bigwedge^{n}\mathcal{F})/W^{n}$ is called \textit{adjoint class}. 
\\
When $\mathcal{F}=\Omega^{1}_{X}$ we will denote by $\omega_{\xi,W,\mathcal{B}}$ the representative which is orthogonal to $W^{n}$ and call
it \textit{adjoint form}.
\end{defi}
With a little abuse of notation we will often abbreviate $\omega_{\xi,W,\mathcal{B}}$ with $\omega_{\xi,W}$.
Notice also that $[\omega_{\xi,W,\mathcal{B}}]=0$ if and only if $\omega_{\xi,W,\mathcal{B}}=0$.
 
 From now on we specialize to the case $\mathcal{F}=\Omega^{1}_{X}$. 
 Then $Ext^{1}(\mathcal{F},\mathcal{O}_{X})\cong H^{1}(X,T_{X})$ and if
 $\xi\in H^{1}(X,T_{X})$ then it represents a first order deformation $\mathcal{X}\rightarrow\textrm{Spec}\frac{\displaystyle\mathbb{C}[t]}{\displaystyle(t^{2})}$.
 The extension class has the following form
\begin{equation}
\label{ext}
0\rightarrow\mathcal{O}_{X}\rightarrow\Omega^{1}_{\mathcal{X}|X}\rightarrow\Omega^{1}_{X}\rightarrow0.
\end{equation}
The following theorem collects some of the results in \cite{pz}.
 \begin{teo}[\cite{pz}]
 \label{imp}
 Suppose that $W^{n}\neq0$ and denote by $D$ and $Z$ the fixed and the moving part of the linear system $|W^{n}|$. Then:
 \begin{enumerate}
  \item $[\omega_{\xi,W,\mathcal{B}}]\in H^{0}(X,\omega_{X}(-D)\otimes\mathcal{I}_{Z})$, where $\mathcal{I}_{Z}$ is the ideal sheaf of the scheme $Z$.
  \item If $[\omega_{\xi,W,\mathcal{B}}]=0$ then $\xi$ is in the kernel of $H^{1}(X,T_{X})\rightarrow H^{1}(X,T_{X}(D))$. 
  In particular if $D=\emptyset$ then $[\omega_{\xi,W,\mathcal{B}}]\neq0$.
\end{enumerate}
 \end{teo}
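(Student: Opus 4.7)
The plan is to unpack the adjoint form via a local splitting of the extension (\ref{ext}) and then compare it with the base locus of the linear system $|W^n|$.

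Pick a basis $\mathcal{B} = \{\omega_1,\ldots,\omega_{n+1}\}$ of $W$ and liftings $\widetilde{\omega}_i \in H^0(X,\Omega^1_{\mathcal{X}|X})$. On a small open $U\subset X$ over which (\ref{ext}) splits, write $\widetilde{\omega}_i|_U = a_i \oplus \omega_i|_U$ with $a_i \in \mathcal{O}(U)$. Under the canonical isomorphism $\bigwedge^{n+1}(\mathcal{O}_U \oplus \Omega^1_X|_U) \cong \omega_X|_U$ only the summands containing exactly one factor from $\mathcal{O}_U$ survive (any other summand vanishes, being in $\bigwedge^{n+1}\Omega^1_X = 0$ or in $\bigwedge^{\geqslant 2}\mathcal{O}_X = 0$), and one computes
\[
\omega_{\xi,W,\mathcal{B}}|_U = \sum_{i=1}^{n+1}(-1)^{i-1} a_i\, \eta_i, \qquad \eta_i := \omega_1 \wedge \cdots \wedge \widehat{\omega}_i \wedge \cdots \wedge \omega_{n+1}.
\]
By construction the $\eta_i$ generate $W^n \subset H^0(X,\omega_X)$.

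To deduce part (1), note that each $\eta_i$ vanishes along the fixed divisor $D$, so locally $\eta_i = s_D\,\tilde{\eta}_i$ with $s_D$ a defining section of $D$ and $\tilde{\eta}_i \in H^0(\omega_X(-D))$. The $\tilde{\eta}_i$ generate the movable part $|W^n - D|$ and hence cut out $Z$ scheme-theoretically. Substituting yields $\omega_{\xi,W,\mathcal{B}}|_U = s_D\sum (-1)^{i-1} a_i \tilde{\eta}_i$, an $\mathcal{O}_U$-combination of sections of $\mathcal{I}_Z \cdot \omega_X(-D)$, which globalizes to $[\omega_{\xi,W,\mathcal{B}}] \in H^0(X,\omega_X(-D)\otimes \mathcal{I}_Z)$.

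For part (2), assume the representative orthogonal to $W^n$ vanishes, so the local formula becomes a genuine relation $\sum (-1)^{i-1} a_i \eta_i = 0$. The strategy is to use the $a_i$, which (up to the ambiguity of the local splitting) encode a \v{C}ech representative of $\xi$, to produce a cochain with values in $T_X(D)$ that trivializes the image of $\xi$ in $H^1(X,T_X(D))$. The crucial observation is that the factor $s_D$ appearing in part (1) provides exactly the freedom needed to absorb the obstruction once logarithmic poles along $D$ are allowed. The main technical obstacle will be to convert the formal wedge relation into an explicit bounding cochain and to verify compatibility across overlaps of local splittings. Granting this, the ``in particular'' statement is immediate: if $D=\emptyset$ then $H^1(X,T_X)\to H^1(X,T_X(D))$ is the identity, whose kernel is trivial, so by contraposition $\xi \neq 0$ forces $[\omega_{\xi,W,\mathcal{B}}]\neq 0$.
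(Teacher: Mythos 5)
The paper itself offers no proof of this statement --- it is quoted verbatim from \cite{pz} --- so there is nothing internal to compare you against; I judge your argument on its own terms.

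Your proof of part (1) is correct and complete. The local splitting computation, the identification of the surviving summand $\mathcal{O}_U\otimes\bigwedge^{n}\Omega^{1}_{X}|_U$, the formula $\omega_{\xi,W,\mathcal{B}}|_U=\sum_i(-1)^{i-1}a_i\eta_i$ with the $\eta_i$ spanning $W^{n}$, and the observation that membership in the subsheaf $\omega_{X}(-D)\otimes\mathcal{I}_{Z}\subset\omega_{X}$ is a local condition all work as stated. (You should note explicitly that the conclusion is well posed for the \emph{class} because $W^{n}$ itself lies in $H^{0}(X,\omega_{X}(-D)\otimes\mathcal{I}_{Z})$, so changing representative by an element of $W^{n}$ is harmless.)

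Part (2), however, is not a proof: you state a strategy and then explicitly defer ``the main technical obstacle,'' which is precisely the content of the assertion. The missing step is the conversion of the pointwise relation $\sum_i(-1)^{i-1}a_i\eta_i=0$ (valid after adjusting the liftings so that the representative is identically zero, which is possible exactly because $[\omega_{\xi,W,\mathcal{B}}]=0$) into a \v{C}ech $0$-cochain with values in $T_{X}(D)$ bounding $\xi$. Concretely, on each $U$ one must solve $\langle v_U,\omega_i\rangle=a_i$ for $i=1,\dots,n+1$; the relation above is exactly the compatibility condition making this solvable where $\omega_1,\dots,\omega_{n+1}$ generate $\Omega^{1}_{X}$, and Cramer's rule produces $v_U$ with denominators the $\eta_i$'s, hence a priori with poles along the whole base scheme $D\cup Z$ of $|W^{n}|$, not just along $D$. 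One then needs (i) to check that $v_U-v_V$ is the \v{C}ech cocycle of $\xi$ on overlaps, and (ii) to remove the poles along $Z$, e.g.\ by invoking that $Z$ has codimension $\geqslant 2$ and $T_{X}(D)$ is locally free, so sections extend across $Z$. Neither step is in your write-up, and (ii) in particular is where the distinction between $D$ and $Z$ --- which you correctly isolated in part (1) --- actually earns its keep. Your final ``in particular'' deduction from part (2) is fine once part (2) is established, since $\xi$ is assumed non-zero in the setup.
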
 
\begin{oss}
\label{curves}
Suppose that in the above setting $X$ is a smooth projective curve.
Then by \cite[Theorem 1.1.8. p. 62]{cp} the converse of the second assertion hold: if $\xi$ is in the kernel of $H^{1}(X,T_{X})\rightarrow H^{1}(X,T_{X}(D))$
then $[\omega_{\xi,W,\mathcal{B}}]=0$.
\end{oss} 
We want to give an expression of the adjoint form in local coordinates which will be useful later. 
Let us restrict ourselves to the case  of interest to us, $\mathcal{F}=\Omega^{1}_{X}$.
Let $\underline{z}=(z_{1},\dots,z_{n})$ be local coordinates on $X$.
If we write locally $\omega_{i}=\sum_{j=1}^{n}f_{ij}(\underline{z})dz_{j}$, then the local expression of the lifting is 
$\widetilde{\omega}_{i}=\sum_{j=1}^{n}f_{ij}(\underline{z})dz_{j}+g_{i}(\underline{z})dt$.

Then, locally, 
\[
\widetilde{\omega}_{1}\wedge\cdots\wedge\widetilde{\omega}_{n+1}=\det
\begin{pmatrix}
f_{11} & \cdots & f_{1n} & g_{1} \\
\vdots & \vdots & \vdots & \vdots \\
f_{n+1n} & \cdots & f_{n+1n} & g_{n}
\end{pmatrix}
dz_{1}\wedge\cdots\wedge dz_{n}\wedge dt
.\]
Now since the isomorphism $L_{\xi}:H^{0}(X,\bigwedge^{n+1}\mathcal{E})\cong H^{0}(X,\omega_{X})$ is the contration with $\frac{\partial}{\partial t}$ 
we get that, locally, 
a representative for $[\omega_{\xi,W,\mathcal{B}}]$ is given by
\[
L_{\xi}(\widetilde{\omega}_{1}\wedge\cdots\wedge\widetilde{\omega}_{n+1})=\det
\begin{pmatrix}
f_{11} & \cdots & f_{1n} & g_{1} \\
\vdots & \vdots & \vdots & \vdots \\
f_{n+1n} & \cdots & f_{n+1n} & g_{n}
\end{pmatrix}
dz_{1}\wedge\cdots\wedge dz_{n}
.\]

We now consider the case we will deal with in the next section of the paper. 
Namely, let $C$ be a smooth non-hyperelliptic curve of genus $g\geqslant 4$ and consider the Ceresa cycles $W_{k}-W_{k}^{-}$, $k=1,\dots,g-2$.
For $k=1$ we will use the notation $C-C^{-}$ and call it the \textit{basic cycle}.
We consider a family $\pi:\mathcal{C}\rightarrow B$  with $\pi^{-1}(0)=C$. 
Therefore we have the corresponding family of Jacobians $\mathcal{J}\rightarrow B$ 
 and also relative Ceresa cycles $\mathcal{W}_{k}-\mathcal{W}_{k}^{-}\subset\mathcal{J}$ 
 (where $\mathcal{J}=\mathcal{J}^{1}(\mathcal{C})$ with the notation of \ref{ht}). 
 The normal function $\nu_{k}:B\rightarrow\mathcal{J}^{g-k}(\mathcal{J})$ is 
 given by the Abel-Jacobi map as explained in \ref{ht}.
\\
We recall a formula proved in \cite{pz} which computes the infinitesimal invariant in terms of the adjoint form.
Although their formula is quite general, we restrict ourselves to the case of interest for us, namely the infinitesimal invariant of $\nu_{k}$.
We assume below that the family $\mathcal{C}\to B$ is the Kuranishi family of $C$; in particular $T_{B,0}\cong H^{1}(C,T_{C})$.
\begin{teo}[\cite{pz}]
\label{adj}
Let $W=\langle\Omega_{1},\dots,\Omega_{k+1}\rangle\subset H^{1,0}(C_{k})$ be a $(k+1)$-dimensional space and let $\xi\in H^{1}(C_{k},T_{C_{k}})$
be a non zero element such that $\xi\cdot\Omega_{i}=0$ for every $i$. 
Then, for every $\sigma\in H^{k,0}(J(C))$ we have
  \[
  \delta\nu_{k}(0)(\Omega_{1}\wedge\cdots\wedge\Omega_{k+1}\wedge\bar{\sigma}\otimes \xi)=2\int_{C_{k}}\omega_{\xi,W}\wedge\overline{\sigma}.
  \]
\end{teo}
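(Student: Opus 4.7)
The plan is to apply the general formula~(\ref{def}) for the infinitesimal invariant, pull the computation back from $J(C)$ to $C_k$ via the Abel--Jacobi parametrization of $W_k$, and identify the resulting integrand with the adjoint form through its local determinantal description.

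The first step exploits the symmetry of the Ceresa cycle. The involution $\iota=-1$ of $J(C)$ acts as $(-1)^{2k+1}=-1$ on $H^{k+1,k}(J(C))$, so the pairing of any such class with $W_k^{-}=\iota_{*}W_k$ is the negative of its pairing with $W_k$; the two halves of the Ceresa cycle therefore contribute equally, producing the factor~$2$ in the statement. This reduces the problem to evaluating the derivative of the normal function of $\mathcal{W}_k$ alone, parametrized by the relative Abel--Jacobi map $u_k\colon\mathcal{C}_k\to\mathcal{J}$. By the projection formula, the ambient integral $\int_{J(C)}\nabla_\xi\tilde\nu_k\wedge(\Omega_1\wedge\cdots\wedge\Omega_{k+1}\wedge\bar\sigma)$ rewrites as an integral over $C_k$ involving $u_k^{*}(\Omega_1\wedge\cdots\wedge\Omega_{k+1}\wedge\bar\sigma)$ paired with the first-order variation of $u_k$ along~$\xi$.

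The heart of the argument is identifying this variation piece with the adjoint form. The class $\xi\in H^1(C,T_C)\cong T_{B,0}$ induces a Kodaira--Spencer class for $\mathcal{C}_k\to B$, still denoted~$\xi$, and the hypothesis $\xi\cdot\Omega_i=0$ is exactly the condition that each $u_k^{*}\Omega_i$ admits a lift $\widetilde{\Omega}_i\in H^0(C_k,\Omega^1_{\mathcal{C}_k|C_k})$ to the first-order deformation. Expanding $\widetilde{\Omega}_i=\sum_j f_{ij}\,dz_j+g_i\,dt$ in local coordinates on $\mathcal{C}_k$, as in the determinantal formula recalled just before the theorem, one checks that the contribution of $\nabla_\xi\tilde\nu_k$ to the integrand is precisely the contraction $L_\xi(\widetilde{\Omega}_1\wedge\cdots\wedge\widetilde{\Omega}_{k+1})$, i.e.\ the adjoint form $\omega_{\xi,W}$. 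Wedging with $u_k^{*}\bar\sigma$ and integrating over $C_k$ then yields the right-hand side, after verifying that the indeterminacy in the choice of liftings (an element of $W^k=\Lambda(W)$) is compatible with that of $\tilde\nu_k$ modulo $\mathcal{F}^{g-k}\mathcal{H}^{2g-2k-1}$.

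The main obstacle is precisely this matching step: showing that the a priori global infinitesimal invariant, defined through the cokernel construction on the family $\mathcal{J}\to B$, coincides with the explicit local determinantal integral on $C_k$. Carrying out the identification requires combining the projection formula, Griffiths' transversality, and the explicit description of the extension~(\ref{ext}) on the first-order deformation of $\mathcal{C}_k$, and constitutes the technical core of the Pirola--Zucconi formalism we are invoking.
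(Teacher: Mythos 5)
First, a point of reference: the paper does not prove this statement at all. Theorem \ref{adj} is imported verbatim from Pirola--Zucconi \cite{pz} (the text says ``We recall a formula proved in \cite{pz}\dots''), so there is no internal proof to compare yours against; the only honest comparison is with the argument in \cite{pz} (and, for $k=1$, in \cite{cp}).

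Against that standard, your proposal has a genuine gap. The peripheral ingredients are right: the factor $2$ does come from $(-1)^{*}$ acting as $-1$ on $H^{2k+1}$, the condition $\xi\cdot\Omega_{i}=0$ is exactly liftability of the $\Omega_{i}$ to $H^{0}(C_{k},\Omega^{1}_{\mathcal{C}_{k}|C_{k}})$, and the adjoint form is indeed the contraction $L_{\xi}(\widetilde{\Omega}_{1}\wedge\cdots\wedge\widetilde{\Omega}_{k+1})$. But the central step --- that $\langle\nabla_{\xi}\tilde{\nu}_{k},\Omega_{1}\wedge\cdots\wedge\Omega_{k+1}\wedge\bar{\sigma}\rangle$ localizes to $\int_{C_{k}}\omega_{\xi,W}\wedge\overline{\sigma}$ --- is asserted (``one checks that\dots'') and then, in your last paragraph, explicitly deferred to ``the Pirola--Zucconi formalism we are invoking,'' which is circular: that identification \emph{is} the theorem. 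The projection formula cannot carry this step on its own, because $\nabla_{\xi}\tilde{\nu}_{k}$ is a cohomology class on $J(C)$, not a current supported on $W_{k}$; the actual mechanism in \cite{cp} and \cite{pz} is to represent the lifting by the chain integral $\tilde{\nu}_{k}(t)=\int_{\Delta_{k}(t)}$ in a $C^{\infty}$ trivialization of the family, differentiate in $t$, and use Stokes' theorem so that the derivative is expressed through the boundary $\partial\Delta_{k}=W_{k}-W_{k}^{-}$, after which the pullback along $u_{k}$ and the determinantal expansion produce the adjoint form. Without carrying out that chain-level computation (and checking that the ambiguity in the liftings, an element of $\Lambda(W)$, pairs to zero with $\overline{\sigma}$ by type reasons), what you have is an accurate roadmap of the Pirola--Zucconi proof rather than a proof.
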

\begin{oss}\label{basic}
1) In the above formula we are using the isomorphisms $H^{0}(C_{k},\omega_{C_{k}})\cong\bigwedge^{k}H^{0}(C,\omega_{C})$ \cite{m61} and 
$H^{0}(C,\omega_{C})\cong H^{1,0}(J(C))$.
\\
2) The case $k=1$, i.e. the infinitesimal invariant of the basic cycle $C-C^{-}$, was computed in \cite{cp}.
\end{oss}

\section{Normal functions for $W_{k}-W_{k}^{-}$}\label{ceresa}
\subsection{Abel-Jacobi map}
In this section we compare the Abel-Jacobi image of $W_{k}-W_{k}^{-}$ with that of $C-C^{-}$, for a non-hyperelliptic curve of genus $g\geqslant 4$.
Recall that there are intermediate Jacobians
\[J^{g-k}(J(C))=\frac{\displaystyle{H^{2g-2k-1}(J(C))}}{\displaystyle{F^{g-k}H^{2g-2k-1}(J(C))}+H^{2g-2k-1}(J(C),\mathbb{Z})}\]
 for every $k$.
Serre and Poincar\'e dualities induce isomorphisms $J^{g-1}(J(C))\cong\frac{\displaystyle{(F^{2}H^{3}(J(C)))^{*}}}{\displaystyle{H_{3}(J(C),\mathbb{Z})}}$ and
$J^{g-k}(J(C))\cong\frac{\displaystyle{(F^{k+1}H^{2k+1}(J(C)))^{*}}}{\displaystyle{H_{2k+1}(J(C),\mathbb{Z})}}$. 
Recall also that there are Abel-Jacobi mappings $AJ_{k}:\mathcal{Z}_{k}^{hom}(J(C))\to J^{g-k}(J(C))$ defined by 
$AJ_{k}(Z)(\omega)=\int_{\sigma}\omega$, 
where $\sigma$ is a $(2k+1)-$chain such that $\partial \sigma=Z$.
\\
Notice also that thanks to the Hard Lefschetz theorem and Lefschetz decomposition we have isomorphisms
\begin{equation}
\label{lefschetz}
H^{2g-2k-1}(J(C),\mathbb{C})\cong\theta^{g-2k-1}H^{2k+1}(J(C),\mathbb{C})\cong\bigoplus_{i=0}^{k}\theta^{g-2k+i-1}P^{2k-2i+1}(J(C)),
\end{equation}
$P^{j}(J(C))=\ker(\theta^{g-j+1}:H^{j}(J(C),\mathbb{C})\rightarrow H^{2g-j+2}(J(C),\mathbb{C}))$ being the primitive cohomology.
According to the above decomposition the Abel-Jacobi map has several components: $AJ_{k}=\sum_{i=0}^{k}AJ^{(i)}_{k}$.
\begin{oss}
The Abel-Jacobi map $u_{k}:C_{k}\to J(C)$ depends on the choice of a point $p_{0}\in C$ so that, a priori, also the Abel-Jacobi image 
$AJ_{k}(W_{k}-W_{k}^{-})$ depens on it.
However, reasoning as in \cite[p. 72]{cp}, one can show that $AJ_{k}^{(i)}(W_{k}-W_{k}^{-})$ does not
 depend on $p_{0}$ for $i\neq k$. 
 \end{oss}
We want to compare the Abel-Jacobi images of $C-C^{-}\in\mathcal{Z}_{1}^{hom}(J(C))$ and $W_{k}-W_{k}^{-}\in\mathcal{Z}_{k}^{hom}(J(C))$.
In what follows we use the basic properties of the Pontryagin product; see for example \cite[pp. 21-22]{bl}.

Let us denote by $\Delta$ a $3-$ chain such that $\partial\Delta=C-C^{-}$ and by $\Delta_{k}$  a $(2k+1)-$chain such that 
$\partial\Delta_{k}=W_{k}-W_{k}^{-}$. 
\\
Define also the cycle 
\[
\Gamma_{k}=\sum_{i=0}^{k-1}C^{*i}*(C^{*(k-i-1)})^{-}.
\]
By the properties of the Pontryagin product \cite[pp. 21-22]{bl} and the fact that $C^{*k}=k!W_{k}$ 
for every $k$ we find
$(C-C^{-})*\Gamma_{k}=k!(W_{k}-W_{k}^{-})$.
   Consider the following diagram 
\[
\xymatrix{
\mathcal{Z}_{1}^{hom}(J(C)) \ar[d]^{*\Gamma_{k}} \ar[r]^{AJ_{1}}&  J^{g-1}(J(C))\\
\mathcal{Z}_{k}^{hom}(J(C)) \ar[r]^{AJ_{k}}     & J^{g-k}(J(C))\ar[u]_{\theta^{k-1}}   
,}
\]
where $*$ denotes the Pontryagin product \cite[p. 21]{bl} (which defined at the level chains) and $\theta\in H^{2}(J(C),\mathbb{Z})$ 
is the cohomology class of the theta divisor. 
With the above notations we have
\begin{equation}\label{abel}
\begin{array}{cc}
    AJ_{1}(C-C^{-})= & \displaystyle\int_{\Delta} \vspace{0.1cm}  \\
    AJ_{k}(W_{k}-W_{k}^{-})= & \displaystyle\int_{\Delta_{k}} \vspace{0.1cm} \\
    \theta^{k-1} AJ_{k}(W_{k}-W_{k}^{-})=& \displaystyle\int_{\Delta_{k}}\theta^{k-1}\wedge  
\end{array}.
\end{equation}
\begin{teo}\label{comparison}
We have
\[
\theta^{k-1} AJ_{k}(W_{k}-W_{k}^{-})=C(g,k)AJ_{1}(C-C^{-}),
\]
where $C(g,k)$ is a rational constant depending only on $g$ and $k$.
\end{teo}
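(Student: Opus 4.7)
The strategy is to express both sides as integrals of a test form $\omega\in F^{2}H^{3}(J(C))$ and transfer the integral over the bounding chain for $W_{k}-W_{k}^{-}$ to one over $\Delta\times\Gamma_{k}$ via the addition map $\mu:J(C)\times J(C)\to J(C)$. Concretely, the identity $(C-C^{-})*\Gamma_{k}=k!(W_{k}-W_{k}^{-})$ together with compatibility of $\partial$ with $*$ produces a bounding chain $\Delta_{k}=\tfrac{1}{k!}\Delta*\Gamma_{k}=\tfrac{1}{k!}\mu_{*}(\Delta\times\Gamma_{k})$ for $W_{k}-W_{k}^{-}$ starting from any $\Delta$ with $\partial\Delta=C-C^{-}$. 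From (\ref{abel}) and the projection formula one gets
\[
\theta^{k-1}AJ_{k}(W_{k}-W_{k}^{-})(\omega)\;=\;\frac{1}{k!}\int_{\Delta\times\Gamma_{k}}\mu^{*}(\theta^{k-1}\wedge\omega),
\]
reducing the problem to a Künneth computation on $H^{*}(J(C)\times J(C))$.

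Next I would decompose $\mu^{*}(\theta^{k-1}\wedge\omega)$ by bidegree. Since $H^{*}(J(C))=\bigwedge^{*}H^{1}(J(C))$ and $\mu^{*}\alpha=\alpha\otimes 1+1\otimes\alpha$ for $\alpha\in H^{1}$, the pullback $\mu^{*}\theta$ has the form $\theta\otimes 1+\eta+1\otimes\theta$ with $\eta\in H^{1}\otimes H^{1}$, and an analogous expansion holds for $\mu^{*}\omega$. Because $\dim_{\mathbb{R}}\Delta=3$ and $\dim_{\mathbb{R}}\Gamma_{k}=2k-2$, only the bidegree $(3,2k-2)$-part of $\mu^{*}(\theta^{k-1}\wedge\omega)$ contributes. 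I would write this piece out in a symplectic basis of $H^{1}(J(C))$, keeping track of the combinatorial factors produced by expanding $(\mu^{*}\theta)^{k-1}$ via the multinomial theorem.

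The second factor (the $H^{2k-2}$-component along $\Gamma_{k}$) is then evaluated cohomologically: Poincar\'e's formula $[W_{m}]=\theta^{g-m}/(g-m)!$ together with $(-1)^{*}|_{H^{2m}}=\mathrm{Id}$ gives $[\Gamma_{k}]=\sum_{i=0}^{k-1}i!(k-1-i)!\,[W_{i}]*[W_{k-1-i}]$, which under the cup/Pontryagin yoga on an abelian variety reduces $\int_{\Gamma_{k}}(\cdot)$ to a scalar times pairing with the appropriate power of $\theta$. After these substitutions, the $\Delta\times\Gamma_{k}$ integral collapses into a sum of terms of the form $c_{j}\int_{\Delta}\theta^{j}\wedge\omega_{j}$ with $\omega_{j}\in H^{3-2j}(J(C))$; degree and bidegree reasons (e.g.\ $\omega_{j}=0$ unless $j=0$, since $\omega$ is of pure degree~$3$ and the leftover forms produced by the Künneth expansion have a controlled shape) force all terms except the one proportional to $\int_{\Delta}\omega$ to vanish cohomologically.

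The principal obstacle will be this last step: the bookkeeping needed to verify that, in the Künneth expansion of $\mu^{*}(\theta^{k-1}\wedge\omega)$ paired with $[\Gamma_{k}]$, the many mixed contributions either cancel or reassemble into the single term $c(g,k)\,\omega\otimes\theta^{?}$. I expect this to rest on translation-invariance of $\theta$ (harmonic representatives on the Jacobian), the Hopf-algebra identity $\mu^{*}\theta=p_{1}^{*}\theta+p_{2}^{*}\theta+\eta$ with $\eta$ of bidegree $(1,1)$, and the Poincar\'e formula applied to each summand of $\Gamma_{k}$; the resulting rational number $C(g,k)$ is then an explicit combinatorial expression in $g$ and $k$ coming from the multinomial coefficients and the factorials $i!(k-1-i)!/(g-i)!(g-k+1+i)!$ appearing in the Poincar\'e formulas, and in particular depends only on $g$ and $k$, as claimed.
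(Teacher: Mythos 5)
Your proposal follows essentially the same route as the paper: both reduce the statement to the integral identity $\int_{\Delta_{k}}\theta^{k-1}\wedge\sigma=C(g,k)\int_{\Delta}\sigma$, replace the bounding chain by $\tfrac{1}{k!}\Delta*\Gamma_{k}=\tfrac{1}{k!}\mu_{*}(\Delta\times\Gamma_{k})$ using $(C-C^{-})*\Gamma_{k}=k!(W_{k}-W_{k}^{-})$, and then expand $\mu^{*}\theta=p_{1}^{*}\theta+p_{2}^{*}\theta+\eta$ and do the K\"unneth/bidegree bookkeeping against the class of $\Gamma_{k}$. The only difference is organizational — the paper carries out the coordinate computation explicitly for $k=2$ and reduces general $k$ to it via $[C^{*i}*(C^{*(k-i-1)})]=(k-1)![W_{k-1}]$, while you phrase the general case through Poincar\'e's formula — so the proposal is correct and matches the paper's argument.
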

\begin{proof}
By relations (\ref{abel}) the result will follow if we prove that
\begin{equation}
\label{thesis}
\displaystyle\int_{\Delta_{k}}\theta^{k-1}\wedge\sigma=C(g,k)\displaystyle\int_{\Delta}\sigma\textrm{ for every }\sigma\in F^{2}H^{3}(J(C)).
\end{equation}
Let us consider the $(2k+1)$-chain $\Delta*\Gamma_{k}$.
By the properties of the Pontryagin product \cite[pp. 21-22]{bl} its boundary is
\[
\partial\Delta_{k}=\partial(\Delta*\Gamma_{k})=(C-C^{-})*\Gamma_{k}=k!(W_{k}-W_{k}^{-})
\]
and then
\begin{equation}\label{pont}
\displaystyle\int_{\Delta_{k}}\theta^{k-1}\wedge\sigma=\frac{1}{k!}\displaystyle\int_{\Delta*\Gamma_{k}}\theta^{k-1}\wedge\sigma 
\end{equation}
The result is then equivalent to the equality
\begin{equation}
\label{tesi}
\frac{1}{k!}\displaystyle\int_{\Delta*\Gamma_{k}}\theta^{k-1}\wedge\sigma=C(g,k)\displaystyle\int_{\Delta}\sigma\textrm{ for every }\sigma\in F^{2}H^{3}(J(C)).
\end{equation}
We first give the details of the proof of (\ref{tesi}) in the case $k=2$.
Let us denote by $z_{1},\dots,z_{g}$ complex coordinates on $J(C)$. Then $F^{2}H^{3}(J(C))$ is generated by the differentials
$\{dz_{i}dz_{j}d\overline{z}_{k},dz_{l}dz_{m}dz_{n}\}$ (we omit the wedge symbol).
We can suppose that $\sigma=dz_{1}dz_{2}d\overline{z}_{3}$, the proof for the other differentials being similar.
With this coordinates $\theta=\frac{i}{2}\sum_{i=1}^{g}dz_{i}\wedge d\overline{z}_{i}$.
If we denote by 
\[
\mu:J(C)\times J(C)\rightarrow J(C)
\] 
the summation map we have $\mu^{*}dz_{i}=du_{i}+dv_{i}$,
where $u_{1},\dots,u_{g}$ and $v_{1},\dots,v_{g}$ are coordinates on the first and second factor respectively.
\\
We have
$\mu^{*}\theta=p_{1}^{*}\theta+p^{*}_{2}\theta+\psi$, where $\psi=\sum_{i}(du_{i}\wedge d\overline{v}_{i}+d\overline{u}_{i}\wedge dv_{i})$
and $p_{1}$, $p_{2}$ the two projections. 
Since $\Delta*(C+C^{-})=\mu_{*}(\Delta\times (C+C^{-}))$ we have
\[
\displaystyle\int_{\Delta*(C+C^{-})}\theta\wedge\sigma=\displaystyle\int_{\Delta\times (C+C^{-})}\mu^{*}\theta\wedge\mu^{*}\sigma
.\]
We immediately see that $\displaystyle\int_{\Delta\times(C+C^{-})}p_{1}^{*}\theta\wedge\mu^{*}\sigma=0$.
To compute the other terms just recall that the homology class of $C$ is 
$  [C]=-2i\sum_{i=1}^{g}a_{i}\wedge \overline{a}_{i}\in H_{2}(J(C),\mathbb{C})$,
  where $a_{i}\in H_{1}(J(C),\mathbb{C})$ are Poincar\'e duals of $dv_{i}$ and then $\displaystyle\int_{C} dv_{i}d\overline{v}_{j}=\displaystyle\int_{C^{-}}dv_{i}d\overline{v}_{j}=\delta_{i,j}$.
 After a straightforward computation we get 
  $ \frac{1}{2}\displaystyle\int_{\Delta\times(C+C^{-})}p_{2}^{*}\theta\wedge\mu^{*}\sigma=     
     g\displaystyle\int_{\Delta}\sigma$,
    $\frac{1}{2}\displaystyle\int_{\Delta\times(C+C^{-})}\psi\wedge\mu^{*}\sigma= -3\displaystyle\int_{\Delta}\sigma.$

Patching together we find
$\frac{1}{2}\displaystyle\int_{\Delta*(C+C^{-})}\theta\wedge\sigma=(g-3)\displaystyle\int_{\Delta}\sigma$
which, according to (\ref{thesis}) and (\ref{pont}) gives the result.
\\
To prove the result in the general case $k\geqslant2$ notice that 
 $[C^{*i}*(C^{*(k-i-1)})]=(k-1)![W_{k-1}]$ for \mbox{every $i$.} 
 Then the proof of (\ref{tesi}) proceeds as in the case $k=2$. 
\end{proof}
\begin{pro}\label{0}
We have $AJ_{k}^{(i)}(W_{k}-W_{k}^{-})=0$ for $i=0,\dots,k-2$.
\end{pro}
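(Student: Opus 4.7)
My plan is to extend Theorem~\ref{comparison} into a family of Pontryagin-product identities linking $AJ_k$ to each $AJ_{k'}$ for $1\leq k'\leq k-1$, then identify the relevant trace operations with iterated Lefschetz duals, which reduces the vanishing to the primitivity of the pairing forms. Starting from the Pontryagin identity
\[
\binom{k}{k'}(W_k-W_k^-)=(W_{k'}-W_{k'}^-)*W_{k-k'}+W_{k'}^-*(W_{k-k'}-W_{k-k'}^-),
\]
obtained by splitting $C^{*k}-(C^-)^{*k}$ into blocks of sizes $k'$ and $k-k'$, and running the chain-integration computation of Theorem~\ref{comparison} with $[W_m]=\theta^{g-m}/(g-m)!$, one gets
\[
\binom{k}{k'}AJ_k(W_k-W_k^-)(\omega)=AJ_{k'}(W_{k'}-W_{k'}^-)(T_{k-k'}\omega)+AJ_{k-k'}(W_{k-k'}-W_{k-k'}^-)(T_{k'}\omega),
\]
where $T_m\omega=(p_1)_*(\mu^*\omega\wedge p_2^*\theta^{g-m})/(g-m)!$ is a trace $H^{2k+1}(J(C))\to H^{2k-2m+1}(J(C))$.

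A direct K\"unneth expansion using $\mu^*\theta=p_1^*\theta+p_2^*\theta+\psi$, together with the Poincar\'e-duality formula $\int_J\eta\wedge\theta^{g-m}/(g-m)!\propto\Lambda^m\eta$, identifies $T_m$ with a nonzero rational multiple of the $m$-th iterated Lefschetz dual $\Lambda^m$. In particular, for $\omega=\theta^i\tau$ with $\tau\in P^{2k-2i+1}$ primitive, the standard $\mathfrak{sl}_2$-relation $\Lambda L^r\tau=r(g-(2k-2i+1)-r+1)L^{r-1}\tau$ combined with $\Lambda\tau=0$ gives $T_m(\theta^i\tau)\propto\theta^{i-m}\tau$ when $m\leq i$ and $T_m(\theta^i\tau)=0$ when $m>i$.

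The proposition then follows by induction on $k$. Base case $k=2$, $i=0$: for $\tau\in P^5$ apply the identity with $k'=1$; both traces vanish since $m\geq 1>0=i$, hence $AJ_2^{(0)}(\tau)=0$. Inductive step: assume the proposition for all $k''<k$; for $i=0$ the argument is identical to the base case (apply the identity with any $k'\in\{1,\dots,k-1\}$). For $1\leq i\leq k-2$, set $k'=k-i\geq 2$ and apply the identity to $\omega=\theta^i\tau$ with $\tau$ primitive of degree $2k-2i+1$. The first trace $T_i(\theta^i\tau)\propto\tau$ pairs with $AJ_{k-i}^{(0)}(W_{k-i}-W_{k-i}^-)$, which vanishes by the inductive hypothesis since $k-i\geq 2$; the second trace $T_{k-i}(\theta^i\tau)$ is either zero (for $i<k/2$) or a nonzero multiple of $\theta^{2i-k}\tau$, pairing with $AJ_i^{(2i-k)}(W_i-W_i^-)$, again zero by the inductive hypothesis since $2i-k\leq i-2$. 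This forces $AJ_k^{(i)}(W_k-W_k^-)=0$.

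The main obstacle is establishing the comparison identity with its cross term (careful bookkeeping in the chain computation, generalizing the proof of Theorem~\ref{comparison}) and verifying the identification $T_m\propto\Lambda^m$; once these two ingredients are in hand the proposition follows cleanly from the $\mathfrak{sl}_2$-representation theory of primitive cohomology on $J(C)$.
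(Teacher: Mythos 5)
Your argument is correct in substance, but it takes a genuinely different and more laborious route than the paper's. The paper avoids cross terms entirely by using the telescoping identity $(C-C^{-})*\Gamma_{k}=C^{*k}-(C^{-})^{*k}$ with $\Gamma_{k}=\sum_{i=0}^{k-1}C^{*i}*(C^{*(k-i-1)})^{-}$: this writes $k!(W_{k}-W_{k}^{-})$ as a Pontryagin product of the basic cycle with a single cycle homologous to a multiple of $C^{*(k-1)}$, so that $k!\,AJ_{k}(W_{k}-W_{k}^{-})$ is the image of $AJ_{1}(C-C^{-})$ under the one map $[C^{*(k-1)}]*:H_{3}(J(C))\to H_{2k+1}(J(C))$; under Poincar\'e duality that image is exactly the $i=k-1$ and $i=k$ Lefschetz summands, and the proposition follows in one step, with no induction. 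Your binary splitting $C^{*k}-(C^{-})^{*k}=(C^{*k'}-(C^{-})^{*k'})*C^{*(k-k')}+(C^{-})^{*k'}*(C^{*(k-k')}-(C^{-})^{*(k-k')})$ is also valid, but it is precisely what creates the cross term and forces the induction on $k$. The technical heart of the two proofs is the same fact, which you isolate as $T_{m}\propto\Lambda^{m}$ and the paper phrases as injectivity of $[C^{*(k-1)}]*$ on $H_{3}$ together with the identification of its image: Pontryagin product with the class dual to $\theta^{g-m}/(g-m)!$ acts on cohomology as a nonzero multiple of the $m$-th dual Lefschetz operator, hence annihilates primitive classes. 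You are right to flag this as the ingredient that still needs proof; it can be established by the K\"unneth computation already carried out in the proof of Theorem \ref{comparison}, or via the Fourier--Mukai transform, which exchanges Pontryagin and cup products and intertwines $L$ with $\Lambda$. Two small remarks: for $i=0$ your argument needs no induction at all, since both traces of a primitive class vanish outright; and your identities, like the paper's, hold only modulo periods and after multiplication by an integer, so strictly both arguments show that the components $AJ_{k}^{(i)}$, $i\leqslant k-2$, are torsion in the intermediate Jacobian --- which is all that is needed for the statements about infinitesimal invariants.
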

\begin{proof}
Notice that the Pontryagin product in homology corresponds to the wedge product in cohomology under Poincar\`e duality.
Under the isomorphisms $H^{2g-3}(J(C),\mathbb{C})\cong H_{3}(J(C),\mathbb{C})$ and $H^{2g-2k-1}(J(C),\mathbb{C})\cong H_{2k+1}(J(C),\mathbb{C})$
we have $AJ_{1}(C-C^{-})=\Delta$ and $k!AJ_{k}(W_{k}-W_{k}^{-})=\Delta*\Gamma_{k}=AJ_{1}(C-C^{-})*\Gamma_{k}$ by the discussion 
at the beginning of this section.
The chain $\Gamma_{k}$ is homologous to $C^{*(k-1)}$ and the map $[C^{*(k-1)}]*:H_{3}(J(C),\mathbb{C})\to H_{2k+1}(J(C),\mathbb{C})$
is injective, hence $AJ_{k}(W_{k}-W_{k}^{-})\in [C^{*(k-1)}]*H_{3}(J(C),\mathbb{C})$.
Using again Poincar\`e duality we obtain the result.
\end{proof}

\subsection{Normal functions in genus $g\geqslant 5$}
Consider a generic curve $C$ of genus $g\geqslant 5$ and a family $\pi:\mathcal{C}\rightarrow B$ with central fiber $C$ . 
Let us denote by $\mathcal{P}^{j}$ the bundle over $B$ with fiber 
$\mathcal{P}^{j}_{t}=P^{j}(J(C_{t})):=\ker(\theta_{J(C_{t})}^{g-j+1}:H^{j}(J(C_{t}),\mathbb{C})\rightarrow H^{2g-j+2}(J(C_{t}),\mathbb{C}))$.
Hard Lefschetz theorem and Lefschetz decomposition induces a splitting of the family of intermediate jacobians
\begin{equation}
\label{global}
     \mathcal{J}^{g-k}=\mathbf{\Theta}^{g-2k-1}\mathcal{P}^{2k+1}\oplus\cdots\oplus\mathbf{\Theta}^{g-k-3}\mathcal{P}^{5}
     \oplus\mathbf{\Theta}^{g-k-2}\mathcal{P}^{3} \oplus\mathbf{\Theta}^{g-k-1}\mathcal{J}.
     \end{equation}
where $\mathbf{ \Theta}$ is the section of $R^{2}j(\pi)_{*}\mathbb{Z}$ given by $\mathbf{ \Theta}(t)=\theta_{J(C_{t})}$. 
\\
We call $\nu_{k}^{(i)}$ the component of $\nu_{k}$ in $\mathbf{\Theta}^{g-2k+i-1}\mathcal{P}^{2k-2i-1}$ with respect to the above decomposition 
for $i=0,\dots,k-2$.
The results of section \ref{ceresa} can be translated in the language of normal functions:
\begin{enumerate}
  \item $\nu_{k}=C(k,g)\mathbf{\Theta}^{k-1}\nu_{1}$
  \item $\nu_{k}^{(i)}=0$ for $i=0,\dots,k-2$. 
\end{enumerate}

Let us assume now that $g\geqslant5$
(we will treat the case $g=4$ in the next section).
For $k=2,\dots,g-2$ there are commutative diagrams
\begin{equation}
\label{inf}
\xymatrix{
 H^{2,1}(J(C))\ar[rr]^{\hspace{.4cm}(\nabla^{g-2}_{g-1}(0))^{t}}\otimes T_{B,0} \ar[d]^{\theta^{k-1}\otimes\textit{Id}} & &
H^{1,2}(J(C))\ar[d]^{\theta^{k-1}} \\
H^{k+1,k}(J(C))\otimes T_{B,0}\ar[rr]^{\hspace{.5cm}(\nabla^{g-k-1}_{g-k}(0))^{t}} & & H^{k,k+1}(J(C))
.}
\end{equation}
Recall that $\delta\nu_{1}(0)\in\ker\big(\nabla^{g-2}_{g-1}(0)^{t}\big)^{*}$, 
\mbox{$\delta\nu_{k}(0)\in\ker\big(\nabla^{g-k-1}_{g-k}(0)^{t}\big)^{*}$}.
Let us assume now that $g\geqslant5$
(we will treat the case $g=4$ in the next section).
For $k=2,\dots,g-2$ there are commutative diagrams
\begin{equation}
\label{inf}
\xymatrix{
 H^{2,1}(J(C))\ar[rr]^{\hspace{.4cm}(\nabla^{g-2}_{g-1}(0))^{t}}\otimes T_{B,0} \ar[d]^{\theta^{k-1}\otimes\textit{Id}} & &
H^{1,2}(J(C))\ar[d]^{\theta^{k-1}} \\
H^{k+1,k}(J(C))\otimes T_{B,0}\ar[rr]^{\hspace{.5cm}(\nabla^{g-k-1}_{g-k}(0))^{t}} & & H^{k,k+1}(J(C))
.}
\end{equation}
Recall that $\delta\nu_{1}(0)\in\ker\big(\nabla^{g-2}_{g-1}(0)^{t}\big)^{*}$, 
\mbox{$\delta\nu_{k}(0)\in\ker\big(\nabla^{g-k-1}_{g-k}(0)^{t}\big)^{*}$} and the Lefschetz decomposition (\ref{lefschetz}).
Since by Proposition \ref{0} we know that $\delta\nu_{k}^{(i)}(0)=0$ for $i=0,\dots,k-2$, we actually have 
\begin{equation}
\label{boh}
\delta\nu_{k}(0)\in\big(\ker(\nabla^{g-k-1}_{g-k}(0)^{t})\cap\textrm{Im}(\theta^{k-1}\otimes\textrm{Id})\big)^{*}.
\end{equation}
\begin{lem}
$(\theta^{k-1}\otimes\textrm{Id})(\ker\nabla^{g-2}_{g-1}(0)^{t})=(\ker\nabla^{g-k-1}_{g-k}(0)^{t})\cap$ \mbox{$Im(\theta^{k-1}\otimes Id)$}.
\end{lem}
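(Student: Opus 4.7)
The plan is to prove the two inclusions separately. The inclusion $\subseteq$ is essentially formal: if $\alpha \in \ker \nabla^{g-2}_{g-1}(0)^t$, then by commutativity of diagram (\ref{inf}) we have $\nabla^{g-k-1}_{g-k}(0)^t((\theta^{k-1}\otimes \textrm{Id})(\alpha)) = \theta^{k-1}(\nabla^{g-2}_{g-1}(0)^t(\alpha)) = 0$, and the element $(\theta^{k-1}\otimes \textrm{Id})(\alpha)$ visibly lies in the image of $\theta^{k-1}\otimes \textrm{Id}$.

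For the reverse inclusion $\supseteq$, I would take $\beta$ in the right-hand side and choose a preimage $\alpha\in H^{2,1}(J(C))\otimes T_{B,0}$ with $(\theta^{k-1}\otimes \textrm{Id})(\alpha)=\beta$. Commutativity then gives $\theta^{k-1}(\nabla^{g-2}_{g-1}(0)^t(\alpha))=\nabla^{g-k-1}_{g-k}(0)^t(\beta)=0$, and it suffices to argue that $\nabla^{g-2}_{g-1}(0)^t(\alpha)=0$, i.e.\ that $\theta^{k-1}$ is injective on $H^{1,2}(J(C))$. This is the only nontrivial input: it follows from the Hard Lefschetz theorem applied to the $g$-dimensional abelian variety $J(C)$, which ensures that $\theta^{g-3}: H^3(J(C))\to H^{2g-3}(J(C))$ is an isomorphism. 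Since we are in the range $k\leqslant g-2$, we can factor $\theta^{g-3}=\theta^{g-k-2}\circ \theta^{k-1}$ on $H^3$, whence $\theta^{k-1}:H^3(J(C))\to H^{2k+1}(J(C))$ is injective, and being of pure Hodge type $(k-1,k-1)$ it is injective on each Hodge summand, in particular on $H^{1,2}(J(C))$.

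The only conceptual obstacle is to isolate this injectivity statement from Hard Lefschetz; once that is in hand, the rest of the argument is diagram chasing. One may also note, as a bonus observation that is not strictly needed but clarifies the picture, that the same injectivity shows $\theta^{k-1}\otimes \textrm{Id}:H^{2,1}(J(C))\otimes T_{B,0}\to H^{k+1,k}(J(C))\otimes T_{B,0}$ is itself injective, so the preimage $\alpha$ of $\beta$ is unique and there is no issue of choice in the argument above.
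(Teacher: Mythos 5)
Your proposal is correct and follows essentially the same route as the paper: the forward inclusion is the formal commutativity of diagram (\ref{inf}), and the reverse inclusion reduces to the injectivity of $\theta^{k-1}$ on $H^{1,2}(J(C))$, which both you and the paper extract from the Hard Lefschetz isomorphism $\theta^{g-3}:H^{3}(J(C))\to H^{2g-3}(J(C))$ by factoring off $\theta^{k-1}$ for $k\leqslant g-2$. Your explicit remark that $\theta^{k-1}$ respects the Hodge decomposition, and hence is injective on the summand $H^{1,2}$, is a useful clarification of a step the paper leaves implicit.
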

\begin{proof}
We only have to prove \[
(\ker\nabla^{g-k-1}_{g-k}(0)^{t})\cap\textrm{Im}(\theta^{k-1}\otimes \textrm{Id})\subset(\theta^{k-1}\otimes\textrm{Id})(\ker\nabla^{g-2}_{g-1}(0)^{t}),
\] 
the other inclusion being obvious.

Let us consider $\Gamma\in(\ker\nabla^{g-k-1}_{g-k}(0)^{t})\cap\textrm{Im}(\theta^{k-1}\otimes \textrm{Id})$. 
Then $\Gamma=$\mbox{$(\theta\otimes\textrm{Id})(\eta)$}
 with $\eta\in H^{2,1}(J(C))\otimes T_{B,0}$.
By commutativity of (\ref{inf}) it follows that \mbox{$\theta^{k-1}(\nabla^{g-2}_{g-1}(0))^{t}(\eta)=0$.}
\\
Hard Lefschetz theorem implies that $\theta^{g-3}:H^{3}(J(C))\rightarrow H^{2g-3}(J(C))$ is an isomorphism, which forces 
\mbox{$\theta^{k-1}:H^{1,2}(J(C))\rightarrow H^{k,k+1}(J(C))$} 
to be injective for $k=1,\dots,g-2$.
Then $(\nabla^{g-2}_{g-1}(0))^{t}(\eta)=0$.
\end{proof}
It follows from the above Lemma and (\ref{boh}) that
\[
\delta\nu_{k}(0)\in
\ker\big(\nabla^{g-k-1}_{g-k}(0)^{t}:\theta^{k-1} H^{2,1}(J(C))\otimes T_{B,0}\to H^{k,k+1}(J(C))\big)^{*}
\] 
or, equivalently, that
\[
\delta\nu_{k}(0)\circ(\theta^{k-1}\otimes\textrm{Id})\in\ker\big(\nabla^{g-2}_{g-1}(0)^{t}\big)^{*}.
\]
This vector space also contains $\delta\nu_{1}(0)$.
In view of the equality $\mathbf{ \Theta}^{k-1}\nu_{k}=C(g,k)\nu_{1}$ one might expect that also the functional above are proportional.
This is the content of the next proposition.
\begin{pro}\label{i.i.comp}
Suppose that $g\geqslant5$. Then for $k=2,\dots,g-2$ we have 
\[
\delta\nu_{k}(0)\circ(\theta^{k-1}\otimes\textrm{Id})=C(g,k)\delta\nu_{1}(0).
\]
In particular the Ceresa cycles $W_{k}-W_{k}^{-}$ are not algebraically trivial.
\end{pro}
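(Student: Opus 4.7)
The plan is to differentiate the identity $\mathbf{\Theta}^{k-1}\nu_{k}=C(g,k)\nu_{1}$ of Theorem \ref{comparison} via the Gauss-Manin connection, exploiting the flatness of the integral class $\mathbf{\Theta}$. Choose local holomorphic liftings $\tilde\nu_{k}\colon U\to\mathcal{H}^{2g-2k-1}$ of $\nu_{k}$ and $\tilde\nu_{1}\colon U\to\mathcal{H}^{2g-3}$ of $\nu_{1}$ near $0\in B$. Since $\mathbf{\Theta}^{k-1}\tilde\nu_{k}$ is another lifting of $C(g,k)\nu_{1}$, there exist $\beta\in\mathcal{F}^{g-1}\mathcal{H}^{2g-3}(U)$ and a locally constant integral section $\gamma$ with
\[
\mathbf{\Theta}^{k-1}\tilde\nu_{k}=C(g,k)\tilde\nu_{1}+\beta+\gamma.
\]
Applying $\nabla$, using $\nabla\mathbf{\Theta}=0$, $\nabla\gamma=0$, and Griffiths transversality (\ref{gm}) gives
\[
\mathbf{\Theta}^{k-1}\nabla\tilde\nu_{k}=C(g,k)\nabla\tilde\nu_{1}+\nabla\beta,\qquad \nabla\beta\in\mathcal{F}^{g-2}\mathcal{H}^{2g-3}\otimes\Omega^{1}_{U}.
\]

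Now take a test vector $\sum_{i}\Omega_{i}\otimes v_{i}\in\ker((\nabla^{g-2}_{g-1}(0))^{t})$; by the preceding Lemma, $\sum_{i}\theta^{k-1}\Omega_{i}\otimes v_{i}\in\ker((\nabla^{g-k-1}_{g-k}(0))^{t})$, so the left-hand side of the claimed equality is well-defined. Formula (\ref{def}) together with the identity above at $t=0$ yields
\[
\delta\nu_{k}(0)\bigl(\textstyle\sum_{i}\theta^{k-1}\Omega_{i}\otimes v_{i}\bigr)=\sum_{i}\int_{J(C)}\theta^{k-1}\nabla_{v_{i}}\tilde\nu_{k}\wedge\Omega_{i}=C(g,k)\delta\nu_{1}(0)\bigl(\textstyle\sum_{i}\Omega_{i}\otimes v_{i}\bigr)+E,
\]
where $E=\sum_{i}\int_{J(C)}\nabla_{v_{i}}\beta\wedge\Omega_{i}$. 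Only the $(g-2,g-1)$-component of $\nabla_{v_{i}}\beta(0)$ pairs non-trivially with the $(2,1)$-class $\Omega_{i}$, and by the very definition of the IVHS map this component equals $\nabla^{g-2}_{g-1}(\beta_{0})(v_{i})$, where $\beta_{0}\in H^{g-1,g-2}(J(C))$ denotes the class of $\beta(0)$ modulo $F^{g}$. Transposing by Serre duality on $J(C)$,
\[
E=\bigl\langle\beta_{0},\,\textstyle\sum_{i}(\nabla^{g-2}_{g-1}(0))^{t}(\Omega_{i}\otimes v_{i})\bigr\rangle=0,
\]
which proves the claimed identity. The non-algebraic triviality of $W_{k}-W_{k}^{-}$ for $2\leq k\leq g-2$ follows immediately: $\delta\nu_{1}(0)\neq 0$ for the generic curve by \cite{cp}, and $C(g,k)\neq 0$ by inspection of the computation in the proof of Theorem \ref{comparison}, so $\delta\nu_{k}(0)\neq 0$; standard infinitesimal invariant arguments then exclude algebraic triviality.

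The main technical obstacle is the vanishing of the error term $E$: because we lift $\nu_{k}$ into the larger ambient bundle $\mathcal{H}^{2g-3}$ via multiplication by $\mathbf{\Theta}^{k-1}$, rather than into its natural home $\mathcal{H}^{2g-2k-1}$, the ambiguity in the lift is parametrized by $\mathcal{F}^{g-1}\mathcal{H}^{2g-3}$, and one has to show that this ambiguity is precisely absorbed by the kernel condition on the test vector. The Serre-duality transposition above is the mechanism that makes this absorption work, and is what gives the identity $\delta\nu_{k}(0)\circ(\theta^{k-1}\otimes\textrm{Id})=C(g,k)\delta\nu_{1}(0)$ its genuine content.
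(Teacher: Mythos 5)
Your proof is correct and follows essentially the same route as the paper: differentiate the identity $\mathbf{\Theta}^{k-1}\nu_{k}=C(g,k)\nu_{1}$ of Theorem \ref{comparison} with the Gauss--Manin connection, use flatness of the integral class $\mathbf{\Theta}$ to pull it through $\nabla$, and then evaluate via formula (\ref{def}) on test vectors in $\ker((\nabla^{g-2}_{g-1}(0))^{t})$. The only difference is that you make explicit the vanishing of the error term $E$ coming from the ambiguity of the lifting, which the paper leaves implicit since it is exactly the statement that the infinitesimal invariant is well-defined as an element of $(\ker(\nabla^{p-1}_{p})^{t})^{*}$ via the isomorphism (\ref{dual}).
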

\begin{proof}
Using the Leibniz rule for $\nabla$ and the fact that the sections of the bundle $\mathcal{H}^{2}$ annihilated by $\nabla$ are the sections of $R^{2}j(\pi)_{*}\mathbb{C}$ 
we find
$\nabla\Big(\mathbf{ \Theta}^{k-1}\tilde{\nu}_{k}\Big)=\mathbf{ \Theta}^{k-1}\nabla\tilde{\nu}_{k}$,
where $\tilde{\nu}_{k}$ is a local lifting of the normal function.
Then, by Theorem \ref{comparison}, $\mathbf{ \Theta}^{k-1}\nabla\tilde{\nu}_{k}=C(g,k)\tilde{\nu}_{1}$.
The formula follows from formula (\ref{def}).

The second statement follows from the non-triviality of $\delta\nu_{1}(0)$ \cite{cp} on some decomposable elements 
$\omega_{1}\wedge\omega_{2}\wedge\overline{\omega}_{3}\otimes\xi\in P^{2,1}(J(C))\otimes H^{1}(T_{C})$.
\end{proof}
\begin{oss}
We want to compare the above results with those of R. Hain.
Suppose $g\geqslant3$ and consider, for $l\in\mathbb{N}_{>0}$ the moduli space $\mathcal{M}_{g}(l)$ of smooth curves of genus $g$
with a fixed basis of $H_{1}(C,\mathbb{Z}/l\mathbb{Z})$.
This moduli space has a universal family $\pi:\mathcal{C}_{g}(l)\to \mathcal{M}_{g}(l)$. 
Suppose that $\mathcal{V}$ is a variation of Hodge structures over $\mathcal{M}_{g}(l)$ of weight $-1$.
One can define in this situation a bundle $\mathcal{JV}\to\mathcal{M}_{g}(l)$ whose fiber over $t$ is $JV_{t}=
\frac{\displaystyle V_{t}}{\displaystyle F^{0}V_{t}+V_{t,\mathbb{Z}}}$ and
 the variations of Hodge structures of weight $-1$ $H^{2g-2k-1}(-g+k)$ whose fiber over $t$ is
$\mathcal{H}^{2g-2k-1}(J(C_{t}),\mathbb{C})(-g+k)$.
The Lefschetz decomposition on the fibers induces a decomposition (cf. (\ref{global}))
\[
\mathcal{H}^{2g-2k-1}=\mathbf{\Theta}^{g-2k-1}\mathcal{P}^{2k+1}(-g+k)\oplus\cdots\oplus\mathbf{\Theta}^{g-k-3}\mathcal{P}^{5}(-g+k)
     \oplus\mathbf{\Theta}^{g-k-2}\mathcal{P}^{3}(-g+k) \oplus\mathbf{\Theta}^{g-k-1}\mathcal{H}^{1}(-g+k).
\]
Set $\mathcal{V}=\mathcal{H}^{2g-2k-1}$ and $\mathcal{V}(\lambda_{1})=\mathbf{ \Theta}^{g-k-1}\mathcal{H}^{1}$,
$\mathcal{V}(\lambda_{3})=\mathbf{ \Theta}^{g-k-2}\mathcal{P}^{3}$, 
$\mathcal{V}(\lambda_{2k-2i+1})=\mathbf{ \Theta}^{g-2k+i-1}\mathcal{P}^{2k-2i+1}$ for $i=0,\dots,k-2$. (cf. this notation with that of \cite{H}).
In \cite{H} it is constructed a normal function $e:\mathcal{M}_{g}(l)\to\mathcal{JV}(\lambda_{3})$ associated to the basic cycle $C-C^{-}$.
When $g\geqslant4$ one could also consider in the same way normal functions $e_{k}:\mathcal{M}_{g}(l)\to\mathcal{JV}(\lambda)$ associated to $W_{k}-W_{k}^{-}$,
$k=2,\dots,g-2$
and their components $e_{k}^{(i)}:\mathcal{M}_{g}(l)\to\mathcal{JV}(\lambda_{i})$, $i=0,\cdots,k-2$. 
We can now state Hain's results in our particular case.
\begin{teo}[\cite{H}, section 8]
\begin{enumerate}
  \item With the above notations, the group of normal functions $s:\mathcal{M}_{g}(l)\to\mathcal{JV}(\lambda_{i})$ has rank $1$ for $i=3$ and $0$ otherwise.
  \item Every normal function $s:\mathcal{M}_{g}(l)\to\mathcal{JV}(\lambda_{3})$  is a multiple of $e$.
\end{enumerate}
\end{teo}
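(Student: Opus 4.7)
The plan is to translate the problem about normal functions into a cohomology calculation on $\mathcal{M}_g(l)$, and then to bring in the representation theory of the mapping class group.

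First I would recall the standard dictionary. For an admissible variation of Hodge structure $\mathcal{V}$ of weight $-1$ over $\mathcal{M}_g(l)$, the group of normal functions $s:\mathcal{M}_g(l)\to\mathcal{JV}$ fits into an exact sequence whose interesting term is $H^1(\mathcal{M}_g(l),\mathcal{V}_{\mathbb{Z}})$, modulo a Hodge-theoretic contribution that I would like to argue is either zero or easy to control in the cases at hand (the pieces $\mathcal{V}(\lambda_i)$ of the Lefschetz decomposition are irreducible, so there is no room for global sections over $\mathcal{M}_g(l)$). For $l$ sufficiently divisible $\mathcal{M}_g(l)$ is a $K(\pi,1)$ with $\pi$ a finite-index subgroup $\Gamma_g(l)$ of the mapping class group $\Gamma_g$, so this cohomology is the group cohomology $H^1(\Gamma_g(l),V(\lambda_i))$ of $\Gamma_g(l)$ with values in the symplectic representation $V(\lambda_i)$ corresponding to the local system $\mathcal{V}(\lambda_i)$.

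Next I would import the results on $H^1$ of mapping class groups with symplectic coefficients due to Morita, Kawazumi, and Looijenga. The key inputs are: (a) for the standard representation $\mathcal{V}(\lambda_1)=\mathbf{\Theta}^{g-k-1}\mathcal{H}^1$, $H^1(\Gamma_g(l),V(\lambda_1))=0$ in the range of $g$ considered; (b) for the primitive $\lambda_3$ piece one has a one-dimensional $H^1$, whose generator is precisely the Johnson homomorphism / Ceresa class; and (c) for the higher primitive pieces $\lambda_{2k-2i+1}$ with $i<k-1$, the corresponding $H^1$ vanishes. Putting these together against the decomposition of $\mathcal{H}^{2g-2k-1}$ gives assertion (1) piece by piece.

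For assertion (2) it then suffices to know that $e$ is itself nonzero in $\mathcal{JV}(\lambda_3)$; this is precisely the infinitesimal content of Ceresa's theorem (Theorem \ref{c}) as realised by the Collino--Pirola calculation, so $e$ lies in a rank-one group and is nontrivial, hence generates it rationally. Every other such normal function is therefore a rational multiple of $e$.

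The main obstacle in this route is step two: the cohomology computations $H^1(\Gamma_g(l),V(\lambda_i))$ are not elementary, and one really needs the nonabelian Hodge-theoretic input (rigidity of VHS over $\mathcal{M}_g(l)$, Johnson-type homomorphisms) to separate the $\lambda_3$ component from the other primitive pieces and to rule out extra classes coming from the boundary of $\overline{\mathcal{M}}_g$. Handling admissibility at the boundary—i.e.\ showing that a normal function extends across the Deligne--Mumford compactification in a controlled way so that it is detected by the group cohomology above—is the most delicate point and would occupy the bulk of a full proof.
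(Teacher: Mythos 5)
The paper does not prove this statement at all---it is quoted directly from Hain \cite{H}, Section 8, as an external input for comparison with $\nu_1$ and $\nu_k$---so there is no internal argument to measure your proposal against. Your sketch (normal functions over $\mathcal{M}_g(l)$ classified up to torsion by $H^1(\Gamma_g(l),V(\lambda))$, which vanishes for every irreducible symplectic $\lambda$ occurring in the Lefschetz decomposition except $\lambda_3$, where it is rank one and generated by the Johnson/Ceresa class) is essentially Hain's own argument; the only caveat is that the key cohomology computation you invoke is due to Hain himself, building on Johnson's work on the Torelli group, rather than to Morita--Kawazumi--Looijenga.
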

It follows in particular that $e_{k}^{(i)}=0$ for $i\neq 3$.
In our case the normal functions $e$ and $e_{k}$ are replaced by $\nu_{1}$ and $\nu_{k}$.
Notice that they are defined for every family of curves, and that they are local objects (if $\pi:\mathcal{C}\to B$ is the Kuranishi family of a curve of genus $g$
then $B$ is an open subset of $\mathcal{M}_{g}$) while $e$ and $e_{k}$ are globally defined.
\end{oss}
\begin{dom}\label{q}
If $V$ is a smooth cubic in $\mathbb{P}^{6}$ there is an isomorphism $J(V)\cong Alb(F)$ \cite{c}, where $J(V)$ is the intermediate jacobian associated
to $H^{5}(V)$ and $F$ the Fano surface of planes contained in $V$. 
The intermediate jacobian $J(V)$ is a $21-$dimensional abelian variety with principal polarization $\theta$.
If we denote by $a:F\rightarrow Alb(F)\cong J(V)$ the Albanese map, 
we get homologically trivial cycles $a(F_{k})-a(F_{k})^{-}$ for $k\geqslant 1$ and associated normal functions $\sigma_{k}$.

Are these cycles algebraically trivial? Is there a relation between $\sigma_{1}$ and $\sigma_{k}$ similar to that between $\nu_{1}$ and $\nu_{k}$?
\\
It is shown in \cite{Jia10} that the cohomology class of $a(F)$ is $\frac{\displaystyle\theta^{19}}{19!}$.
This lead us to suspect that the analogous of Theorem \ref{abel} holds in this case.
For what concern algebraic equivalence, one can study the infinitesimal invariants.
In this case $\delta\sigma_{1}(0)\in$\mbox{$\big(\ker(\nabla(0))^{t}:H^{3,2}(J(V))\otimes H^{1}(T_{V})\rightarrow H^{2,3}\big)^{*}$},
and one can use Theorem \ref{adj} to compute it on decomposable tensors.
It is possible to show \cite{miatesi} that there exist $\xi\in H^{1}(T_{V})$ and a $3-$dimensional space $W\in H^{3,2}(V)\cong H^{1,0}(J(V))$ annihilated by 
$\xi$ such that the adjoint form is not zero.
We are not able to determine, however, if $\delta\sigma_{1}(0)$ is zero or not.
\end{dom}

\subsection{Genus 4}

Let us consider a smooth generic curve $C$ of genus $4$ and its Kuranishi family $\mathcal{C}\to B$.
In the notation of the preceding section, Theorem \ref{0} in this case would say that the section $\nu_{2}^{(1)}$ of $\mathcal{P}^{3}$ is zero.
We show in this section that this is not the case.
The proof uses an explicit computation of the adjoint form, which is the main result of this section.

Let us $C$ identify with the canonical curve in $\mathbb{P}H^{0}(C,\omega_{C})^{*}
\cong\mathbb{P}^{3}$.
It is well known that it is a complete intersection of a smooth quadric $Q$ and a cubic. 
The rulings of the quadric cut on $C$ two divisors
$D_{1}=x_{1}+x_{2}+x_{3}$ and $D_{2}=y_{1}+y_{2}+y_{3}$. 
To a point $q\in Q\setminus C$ there correspond a first order deformation $\xi_{q}\in H^{1}(T_{C})$ of rank one, which is not a Schiffer \cite{grif}. 
Let us call $V=\ker\xi_{q}=\langle\omega_{1},\omega_{2},\omega_{3}\rangle$; then 
$\mathbb{P}V=\{\textrm{hyperplanes in }\mathbb{P}^{3} \textrm{ containing }q\}$. 
Let us complete $V$ to a basis of $H^{1,0}(C)$: $H^{1,0}(C)=\langle\omega_{1},\omega_{2},\omega_{3},\omega_{4}\rangle$ and
consider $V^{2}\subset\wedge^{2}H^{1,0}(C)\cong H^{0}(\omega_{C_{2}})$ \cite{m61}.
It is easy to check that the base locus of $|V^{2}|\subset|\omega_{C_{2}}|$ is 
\[
B_{V^{2}}=Z=\{x_{1}+x_{2},x_{1}+x_{3},x_{1}+x_{3},y_{1}+y_{2},y_{1}+y_{3},y_{2}+y_{3}\}.
\]
The following lemma is due to A. Collino (personal communication).
\begin{lem}\label{collino}
We have $h^{0}(C_{2},\omega_{C_{2}}\otimes\mathcal{I}_{Z})=4$.
\end{lem}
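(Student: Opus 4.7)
The plan is to compute directly the rank of the evaluation map
\[
\mathrm{ev}_Z \colon H^0(C_2, \omega_{C_2}) \longrightarrow \bigoplus_{p+q \in Z} \omega_{C_2}|_{p+q}
\]
and subtract from $h^0(\omega_{C_2})$. Since $h^0(\omega_{C_2}) = \binom{4}{2} = 6$ via Macdonald's isomorphism $H^0(\omega_{C_2}) \cong \bigwedge^2 H^0(C, \omega_C)$ (already recalled in Remark \ref{basic}), the identity $h^0(\omega_{C_2}\otimes \mathcal{I}_Z) = 4$ reduces to showing that $\mathrm{ev}_Z$ has rank exactly $2$.

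To compute this rank I dualize. For $p \neq q$ in $C$, a short local computation shows that the value at $p + q$ of $\omega_i \wedge \omega_j \in \bigwedge^2 H^0(\omega_C)$, in a natural trivialization of $\omega_{C_2}$, is the $2\times 2$ minor $\omega_i(p)\omega_j(q) - \omega_i(q)\omega_j(p)$. Consequently the evaluation functional at $p+q$ corresponds, up to a nonzero scalar, to the decomposable bivector $v_p \wedge v_q \in \bigwedge^2 H^0(\omega_C)^*$, where $v_p$ denotes the functional $\omega \mapsto \omega(p)$ (a representative of $p \in \mathbb{P}^3 = \mathbb{P}H^0(\omega_C)^*$). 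The rank of $\mathrm{ev}_Z$ equals the dimension of
\[
S := \spa\bigl\{\, v_{x_i}\wedge v_{x_j},\ v_{y_i}\wedge v_{y_j} \,:\, 1 \leq i < j \leq 3 \,\bigr\} \subset \textstyle\bigwedge^2 H^0(\omega_C)^*.
\]

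The core observation is the collinearity of the three points in each ruling. Because $x_1, x_2, x_3 \in L_1 \subset \mathbb{P}^3$, the vectors $v_{x_1}, v_{x_2}, v_{x_3}$ span only a $2$-plane $\Pi_1 \subset H^0(\omega_C)^*$; writing $v_{x_3} = \alpha v_{x_1} + \beta v_{x_2}$ with $\alpha, \beta \neq 0$, expansion of the wedges yields
\[
v_{x_1}\wedge v_{x_3} = \beta\,(v_{x_1}\wedge v_{x_2}), \qquad v_{x_2}\wedge v_{x_3} = -\alpha\,(v_{x_1}\wedge v_{x_2}),
\]
so the three $x$-wedges all lie on the line $\bigwedge^2 \Pi_1$. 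Symmetrically the three $y$-wedges span the line $\bigwedge^2 \Pi_2$. Since the two rulings $L_1, L_2$ of $Q$ are distinct, the $2$-planes $\Pi_1, \Pi_2$ are distinct and their exterior squares are independent lines in $\bigwedge^2 H^0(\omega_C)^*$. Therefore $\dim S = 2$, whence $\mathrm{rk}(\mathrm{ev}_Z) = 2$ and $h^0(\omega_{C_2}\otimes \mathcal{I}_Z) = 6 - 2 = 4$.

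No step presents a serious obstacle. The most delicate ingredient is the local $2\times 2$-minor description of the evaluation, a standard consequence of Macdonald's construction. One should also note that $Z$ is a reduced length-$6$ subscheme of $C_2$: this is guaranteed by the fact that the six points $x_i+x_j$, $y_i+y_j$ are pairwise distinct and that for generic $C$ the three generators of $V^2$ do not acquire unexpected higher-order coincidences at these points.
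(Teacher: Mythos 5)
Your proof is correct and is essentially the paper's argument in dual language: the paper factors the bicanonical map of $C_2$ through the Plücker embedding of $\mathbb{G}(1,\mathbb{P}^3)$ and observes that the six evaluation conditions at $Z$ collapse to the two conditions of containing $P(l_1)$ and $P(l_2)$, which is exactly your statement that the six functionals $v_p\wedge v_q$ span only the two independent lines $\bigwedge^2\Pi_1$ and $\bigwedge^2\Pi_2$. Both routes give rank $2$ for the evaluation map and hence $h^0=6-2=4$.
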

\begin{proof}
We have a commutative diagram
\[
\xymatrix{         
C_{2} \ar[dr]^{\mathcal{G}}\ar[r] & \mathbb{P}H^{0}(\omega_{C_{2}})^{*}\cong\mathbb{P}^{5}
\\ & \mathbb{G}(1,\mathbb{P}H^{0}(\omega_{C})^{*})\cong\mathbb{G}(1,3)\ar[u]^{P}
},\]
where $\phi:C_{2}\rightarrow \mathbb{P}H^{0}(\omega_{C_{2}})^{*}$ is the bicanonical map, 
$P:\mathbb{G}(1,3)\rightarrow\mathbb{P}^{5}$ the Plucker embedding and $\mathcal{G}(D)=\overline{D}$
(we denote by $\overline{D}$ the linear span of $D$, i.e. the line $\overline{xy}$ if $D=x+y$ or the tangent line to $x$ if $D=2x$). 
If $\sigma,\eta\in H^{1,0}(C)$ then $\sigma\wedge\eta\in H^{0}(C_{2},\omega_{C_{2}}\otimes\mathcal{I}_{Z})$ iff
 $(\sigma\wedge\eta)(D)=0$ for every $D\in Z$.
Such a form defines an hyperplane $H(\sigma\wedge\eta)=\{\sigma\wedge\eta=0\}$ in $\mathbb{P}^{5}$. 
Then $\sigma\wedge\eta\in H^{0}(C_{2},\omega_{C_{2}}\otimes\mathcal{I}_{Z})$ iff $P(\overline{D})\in H(\sigma\wedge\eta)$ for every $D\in B_{V^{2}}$, 
iff $P(l_{1}),P(l_{2})\in H(\sigma\wedge\eta)$, where $l_{1},l_{2}$ are the rulings through $q$.
\\
We conclude that
$\mathbb{P}H^{0}(\omega_{C_{2}}\otimes\mathcal{I}_{Z})\cong\{\textrm{hyperplanes containing }P(l_{1}),P(l_{2})\}$.
Let us take coordinates in $\mathbb{P}^{3}$ such that $q=[0:0:0:1]$. 
If $x\in\textrm{Supp}D_{1}$, $y\in\textrm{Supp}D_{2}$, then
\begin{equation}\label{conc}
\begin{array}{l}
P(l_{1}) =  [0:0:\omega_{1}(x):0:\omega_{2}(x):\omega_{3}(x)]      \\
P(l_{2}) =  [0:0:\omega_{1}(y):0:\omega_{2}(y):\omega_{3}(y)]
\end{array}
\end{equation}
Now since $\omega_{i}\wedge\omega_{j}$ gives coordinates on $\mathbb{P}H^{0}(\omega_{C_{2}})^{*}\cong\mathbb{P}^{5}$, using (\ref{conc})
we find 
\[
\mathbb{P}H^{0}(\omega_{C_{2}}\otimes\mathcal{I}_{Z})\cong\mathbb{P}\langle\omega_{1}\wedge\omega_{2},
\omega_{1}\wedge\omega_{3},\omega_{2}\wedge\omega_{3},\Omega\rangle,
\] 
where
$\Omega\in\langle\omega_{i}\wedge\omega_{4}\rangle_{i=1,2,3}$.
\end{proof}
Recalling that $\omega_{\xi_{q},V}\in\frac{\displaystyle{H^{0}(\omega_{C_{2}}\otimes\mathcal{I}_{Z})}}{\displaystyle{V^{2}}}$, we obtain immediately
\begin{cor}\label{exp}
$\omega_{\xi_{q},V}=c\Omega$, with $c\in\mathbb{C}^{*}$.
\end{cor}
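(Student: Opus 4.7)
The plan is to combine the dimension count from Lemma \ref{collino} with the structural results of Theorem \ref{imp} to force the adjoint form into a one-dimensional space, then to verify it is nonzero.

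First I would note that Lemma \ref{collino} produces an explicit basis
\[
H^{0}(C_{2},\omega_{C_{2}}\otimes\mathcal{I}_{Z})=\langle\omega_{1}\wedge\omega_{2},\,\omega_{1}\wedge\omega_{3},\,\omega_{2}\wedge\omega_{3},\,\Omega\rangle,
\]
while $V^{2}=\bigwedge^{2}V$ coincides with the three-dimensional subspace spanned by the first three generators. Consequently the quotient $H^{0}(C_{2},\omega_{C_{2}}\otimes\mathcal{I}_{Z})/V^{2}$ is one-dimensional and is generated by the class of $\Omega$.

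Next I would observe that the base locus of $|V^{2}|$ is the finite scheme $Z$ of six points exhibited before Lemma \ref{collino}. On the surface $C_{2}$ a fixed component must have codimension one, so the fact that the base locus is zero-dimensional forces the fixed part $D$ of $|V^{2}|$ to be empty. Applying part $(1)$ of Theorem \ref{imp} with $D=\emptyset$ then places the adjoint class $[\omega_{\xi_{q},V}]$ inside $H^{0}(C_{2},\omega_{C_{2}}\otimes\mathcal{I}_{Z})/V^{2}$, and hence, by the previous paragraph, it must equal $c\,\Omega$ for a unique scalar $c\in\mathbb{C}$.

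To conclude I would invoke part $(2)$ of Theorem \ref{imp}: since $D=\emptyset$, the adjoint form is automatically nonzero, so $c\neq 0$, which is the required statement. There is no computational obstacle here; the only point that deserves care is the verification that the hypotheses of Theorem \ref{imp} are met, namely that $\xi_{q}$ (viewed as a first-order deformation of $C_{2}$ induced by the deformation of $C$) annihilates each generator of $V^{2}$. This is immediate from the functoriality of symmetric products, since the induced action on $H^{1,0}(C_{2})\cong\bigwedge^{2}H^{1,0}(C)$ is the derivation extending the action of $\xi_{q}$ on $H^{1,0}(C)$, and the latter kills $V$ by construction.
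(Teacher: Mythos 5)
Your argument is correct and is essentially the paper's own (one-line) proof: Lemma \ref{collino} shows that $H^{0}(\omega_{C_{2}}\otimes\mathcal{I}_{Z})/V^{2}$ is one-dimensional and spanned by the class of $\Omega$, while Theorem \ref{imp} (with fixed part $D=\emptyset$, since the base locus of $|V^{2}|$ is the finite scheme $Z$) places the adjoint class in that quotient and guarantees it is nonzero. One small correction to your closing remark: the hypothesis to verify is that $\xi_{q}$ annihilates the three $1$-forms $\Omega_{i}\in H^{1,0}(C_{2})\cong H^{1,0}(C)$ (it is $H^{2,0}(C_{2})$, not $H^{1,0}(C_{2})$, that is identified with $\bigwedge^{2}H^{1,0}(C)$), and this follows at once from $V=\ker\xi_{q}$ on $C$ together with the compatibility of the cup product under the isomorphisms $H^{1}(C,T_{C})\cong H^{1}(C_{2},T_{C_{2}})$ and $H^{1,0}(C)\cong H^{1,0}(C_{2})$.
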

Let us fix a basis $\omega_{1},\dots,\omega_{4}\in H^{1,0}(C)$ such that the following hold:
  \\
  1) $\theta=i\sum_{j=1}^{4}\omega_{j}\wedge\overline{\omega}_{j}$,
  \\
  2) $\ker\xi_{q}=\langle\omega_{1},\omega_{2},\omega_{3}\rangle$,
  \\ 
  3) $\xi_{q}\cdot\omega_{4}=\rho\omega_{4}$, with $\rho\in\mathbb{C}$ (because $\theta\cdot\xi_{q}=0$ implies 
 $(\xi_{q}\cdot\omega_{4})\wedge\omega_{4}=0$).
 \\
 4) $\omega_{\xi_{q},\omega_{1},\omega_{2},\omega_{3}}=\omega_{1}\wedge\omega_{4}$.
\begin{pro}
The section $\nu_{2}^{(1)}$ of $\mathcal{P}^{3}$ is not torsion.
\end{pro}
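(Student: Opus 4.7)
The plan is to evaluate $\delta\nu_2(0)$ on an explicit test tensor that (i) lies in $\ker(\nabla^{g-3}_{g-2}(0))^{t}$, (ii) lies entirely in the primitive summand $\theta\cdot P^{2,1}(J(C))$ of the Lefschetz decomposition of $H^{3,2}(J(C))$, and (iii) pairs nontrivially via the adjoint-form formula of Theorem \ref{adj}. Since a torsion normal function has vanishing infinitesimal invariant, nonvanishing of the primitive piece of $\delta\nu_2(0)$ on such a tensor forces $\nu_2^{(1)}$ to be non-torsion.

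Concretely, I would take
\[
\alpha = \omega_1\wedge\omega_2\wedge\omega_3\wedge\overline{\omega}_1\wedge\overline{\omega}_4 \in H^{3,2}(J(C)),\qquad \sigma=\omega_1\wedge\omega_4\in H^{2,0}(J(C)),
\]
and couple $\alpha$ with $\xi_q\in T_{B,0}$. Theorem \ref{adj} applied to $W=V=\langle\omega_1,\omega_2,\omega_3\rangle$, together with Corollary \ref{exp} and item 4 of the adapted basis, gives
\[
\delta\nu_2(0)(\alpha\otimes\xi_q)=2\int_{C_2}(\omega_1\wedge\omega_4)\wedge(\overline{\omega}_1\wedge\overline{\omega}_4).
\]
Since $u_2:C_2\to W_2\subset J(C)$ is birational and $[W_2]=\theta^2/2$ by Poincar\'e's formula, this becomes $\int_{J(C)}\theta^2\wedge\omega_1\wedge\omega_4\wedge\overline{\omega}_1\wedge\overline{\omega}_4$. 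Expanding $\theta=i\sum_k\omega_k\wedge\overline{\omega}_k$, only the summand $\omega_2\omega_3\overline{\omega}_2\overline{\omega}_3$ of $\theta^2$ avoids a repeated 1-form, so the integral is a nonzero multiple of the top volume integral on $J(C)$. The membership $\alpha\otimes\xi_q\in\ker(\nabla^{g-3}_{g-2}(0))^{t}$ follows from the Leibniz rule: for each of the five wedge factors, cup product with $\xi_q$ either vanishes because $\omega_i\in\ker\xi_q$ (for $i=1,2,3$) or lands in $H^{-1,2}=0$ (for $\overline{\omega}_1,\overline{\omega}_4$).

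The crux is to show that $\alpha$ belongs to $\theta\cdot P^{2,1}(J(C))$, so that $\delta\nu_2(0)(\alpha\otimes\xi_q)$ detects only $\delta\nu_2^{(1)}(0)$ and not the $\theta^2\cdot H^{1,0}$ component (which is controlled by $\nu_2^{(2)}$ through a formula of the type in Proposition \ref{i.i.comp}). Writing $\alpha=\theta p_1+\theta^2 p_2$ with $p_1\in P^{2,1}$ and $p_2\in H^{1,0}$, one has $\theta\wedge\alpha=\theta^3 p_2$, and Hard Lefschetz makes $\theta^3:H^{1,0}(J(C))\to H^{4,3}(J(C))$ an isomorphism. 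A direct computation shows $\theta\wedge\alpha=0$: each of the four summands $i\,\omega_k\wedge\overline{\omega}_k\wedge\alpha$ contains a repeated 1-form ($\omega_1,\omega_2,\omega_3$ for $k=1,2,3$ and $\overline{\omega}_4$ for $k=4$). Hence $p_2=0$, so $\alpha\in\theta\cdot P^{2,1}(J(C))$ and $\delta\nu_2^{(1)}(0)(\alpha\otimes\xi_q)=\delta\nu_2(0)(\alpha\otimes\xi_q)\neq 0$.

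The main obstacle is the purity step $\theta\wedge\alpha=0$: without it, nonvanishing of $\delta\nu_2(0)$ on $\alpha\otimes\xi_q$ could in principle come entirely from the $\theta^2\cdot H^{1,0}$ summand, i.e.\ from $\delta\nu_2^{(2)}(0)$, and would say nothing about the primitive piece $\nu_2^{(1)}$. The argument depends essentially on the adapted basis of items 1--4, which forces $\omega_4$ to be the unique direction in $H^{1,0}(C)$ outside $\ker\xi_q$, and on the particular choice of antiholomorphic factors $\overline{\omega}_1,\overline{\omega}_4$ in $\alpha$, which together produce the required combinatorial cancellation.
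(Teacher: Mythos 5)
Your proposal is correct and follows essentially the same route as the paper: the same test tensor $\omega_{1}\wedge\omega_{2}\wedge\omega_{3}\wedge\overline{\omega}_{1}\wedge\overline{\omega}_{4}\otimes\xi_{q}$, evaluated via Theorem \ref{adj}, Corollary \ref{exp} and the adapted basis, with nonvanishing of the resulting integral $2\int_{C_{2}}\omega_{1}\wedge\omega_{4}\wedge\overline{\omega}_{1}\wedge\overline{\omega}_{4}$. The only difference is that you spell out the two steps the paper leaves implicit, namely the primitivity check (the paper just asserts the form lies in $P^{3,2}(J(C))$, which your computation $\theta\wedge\alpha=0$ plus Hard Lefschetz makes precise) and the explicit evaluation of the integral via Poincar\'e's formula.
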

\begin{proof}
Set $\phi=\omega_{1}\wedge\omega_{2}\wedge\omega_{3}$.
Then it is easy to check that $\phi\wedge\bar{\omega}_{1}\wedge\bar{\omega}_{4}\in P^{3,2}(J(C))$.
By Theorem \ref{adj} and Corollary \ref{exp}
\[
\delta\nu_{2}((\phi\wedge\overline{\omega}_{1}\wedge\overline{\omega}_{4})\otimes\xi_{q})=
2\int_{C_{2}}\omega_{\xi_{q},\omega_{1},\omega_{2},\omega_{3}}\wedge\overline{\omega}_{1}\wedge\overline{\omega}_{4}=
2\int_{C_{2}}\omega_{1}\wedge\omega_{4}\wedge\overline{\omega}_{1}\wedge\overline{\omega}_{4}\neq 0.
\]
\end{proof}

\section{The zero locus of $\delta\nu_{2}$}\label{i.i.} 
In this section we determine the zero-locus of $\delta\nu_{2}(0)$. 
We will deduce from that the Torelli theorem for curves of genus $g\geqslant4$.
Although not explicetely stated in \cite{cp}, the Torelli theorem for genus $g\geqslant3$ can also be deduced from the computation of $\delta\nu_{1}$ 
on decomposable elements $\tau_{1}\wedge\tau_{2}\wedge\eta\otimes v\in P^{2,1}(J(C))\otimes T_{B,0}$.  
One can then appeal to Theorem  \ref{i.i.comp} to reconstruct the curve from $\delta\nu_{2}(0)$.
The interesting thing is that we will use Theorem \ref{adj} to compute $\delta\nu_{2}(0)$ on some decomposable tensors 
$\omega_{1}\wedge\omega_{2}\wedge\omega_{3}\wedge\sigma_{1}\wedge\sigma_{2}\otimes v\in H^{3,2}(J(C))\otimes T_{B,0}$ 
which are not, in general, of the form $\theta\wedge\tau_{1}\wedge\tau_{2}\wedge\eta\otimes v$.
We then get extra geometric informations from $\delta\nu_{2}(0)$ which cannot be detected from the computations of \cite{cp}. 
\subsection{Holomorphic forms on $C_{2}$}\label{holo}
Suppose that $C$ is a non-hyperelliptic curve.
We begin by recalling well-known facts about holomorphic one forms on $C_{2}$. 
Recall that $C_{2}=\frac{\displaystyle{C\times C}}{\displaystyle{S_{2}}}$, where $S_{2}$ is the symmetric group of order two, generated by the transposition $\sigma$. 
We then have $H^{1,0}(C_{2})\cong H^{1,0}(C\times C)^{\textit{inv}}$, where $H^{1,0}(C\times C)^{\textit{inv}}$ is the space of invariant forms. 
By the Kunneth decomposition we have $H^{1,0}(C\times C)\cong H^{1,0}(C)\otimes H^{1,0}(C)$, and the action of $\sigma$ interchanges the two factors.
Then 
\[
H^{1,0}(C_{2})\cong \{(\omega_{1},\omega_{2})\in H^{1,0}(C)\times H^{1,0}(C) \ | \ \omega_{1}=\omega_{2}\}\cong H^{1,0}(C).
\]
For the remaining part of the paper we will adopt the following notation: if $\Omega\in H^{1,0}(C_{2})$, we will denote by $\omega$ the corresponding form in
$H^{1,0}(C)$ under the isomorphism above, and viceversa.

We also remark that there is an isomorphism $H^{2,0}(C_{2})\cong\bigwedge^{2}H^{0}(C,\omega_{C})$ \cite{m61}.
Now we turn to the zero locus of holomorphic forms.
\begin{lem}\label{zero}
If $\Omega\in H^{1,0}(C_{2})$ then $Z(\Omega)=\{D\in C_{2}\textrm{ }|\textit{ }\omega\in H^{0}(\omega_{C}(-D))\}$.
\end{lem}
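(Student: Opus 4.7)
I would exploit the two-to-one quotient $\pi:C\times C\to C_{2}$. Under the identification $H^{1,0}(C_{2})\cong H^{1,0}(C\times C)^{\mathrm{inv}}\cong H^{1,0}(C)$ recalled just above the lemma, the correspondence $\omega\leftrightarrow\Omega$ is characterized by
\[
\pi^{*}\Omega \;=\; p_{1}^{*}\omega+p_{2}^{*}\omega,
\]
where $p_{i}:C\times C\to C$ are the projections. Since the claim is pointwise, I would verify it separately on the complement of the diagonal $\Delta\subset C_{2}$ and on $\Delta$ itself.

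For a reduced divisor $D=p+q$ with $p\neq q$ the map $\pi$ is \'etale at $(p,q)$, so $\Omega(D)=0$ iff $(\pi^{*}\Omega)(p,q)=0$. Since $\pi^{*}\Omega$ is a sum of forms supported on the two factors, this is visibly equivalent to $\omega(p)=\omega(q)=0$, i.e.\ $\omega\in H^{0}(\omega_{C}(-D))$.

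The substantive case is a diagonal point $D=2p$. I would pick a local coordinate $z$ on $C$ at $p$ and write $\omega=f(z)\,dz$, so that $\pi^{*}\Omega=f(z_{1})\,dz_{1}+f(z_{2})\,dz_{2}$. Take the symmetric coordinates $s=z_{1}+z_{2}$ and $u=(z_{1}-z_{2})^{2}$ on $C_{2}$ near $2p$; splitting $\pi^{*}\Omega$ into its $S_{2}$-invariant and anti-invariant parts and using $ds=dz_{1}+dz_{2}$, $du=2(z_{1}-z_{2})(dz_{1}-dz_{2})$ gives
\[
\Omega \;=\; \tfrac{1}{2}\bigl(f(z_{1})+f(z_{2})\bigr)\,ds \;+\; \frac{f(z_{1})-f(z_{2))}{2(z_{1}-z_{2})}\,du.
\]
Both coefficients are symmetric in $(z_{1},z_{2})$, and the second one is holomorphic across $\Delta$ because the divided difference $\bigl(f(z_{1})-f(z_{2})\bigr)/(z_{1}-z_{2})$ is a holomorphic symmetric function. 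Evaluating at $z_{1}=z_{2}=0$ (i.e.\ $s=u=0$) produces $f(0)$ and $\tfrac{1}{2}f'(0)$ respectively, so $\Omega$ vanishes at $2p$ iff $f(0)=f'(0)=0$, which is exactly $\omega\in H^{0}(\omega_{C}(-2p))$.

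The only technical point is verifying that the $du$-coefficient is regular across the diagonal; once this is in hand, the lemma reduces to the two bookkeeping checks above.
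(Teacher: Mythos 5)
Your proof is correct and follows essentially the same route as the paper: identify $\Omega$ with the invariant form $p_{1}^{*}\omega+p_{2}^{*}\omega$ on $C\times C$ and check vanishing pointwise, the off-diagonal case being immediate because the quotient map is \'etale there. The paper merely asserts that the diagonal case $D=2p$ is ``similar,'' whereas you actually carry it out in symmetric coordinates; your computation is sound apart from a harmless normalization slip (with $du=2(z_{1}-z_{2})(dz_{1}-dz_{2})$ the $du$-coefficient should be $\frac{f(z_{1})-f(z_{2})}{4(z_{1}-z_{2})}$ rather than $\frac{f(z_{1})-f(z_{2})}{2(z_{1}-z_{2})}$), which does not affect the conclusion that $\Omega(2p)=0$ iff $f(0)=f'(0)=0$.
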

\begin{proof}
Let us consider the case $D=p+q$ with $p$ and $q$ distinct. 
Then in a neighborhood of $D$ we can take as local coordinates the same as those in $C\times C$.
If $\eta\in H^{1,0}(C\times C)$ corresponds to $(\eta_{1},\eta_{2})\in H^{1,0}(C)\otimes H^{1,0}(C)$ under the Kunneth isomorphism,
 then $\eta(p,q)=0$ iff $\eta_{1}(p)=\eta_{2}(q)=0$.
Now since $H^{1,0}(C_{2})$ is the space of invariant forms under the transposition, for $\Omega\in H^{1,0}(C_{2})$ we have:
\[
\Omega(D)=0 \Leftrightarrow\omega(p)=\omega(q)=0.
\]
In a similar way it is possible check that the result holds for $D=2p$.  
\end{proof}
We now consider zeroes of holomorphic two-forms. 
If $\sigma\wedge\eta\in\bigwedge^{2}H^{0}(\omega_{C})\cong H^{2,0}(C_{2})$ then, by definition of wedge product, $\sigma\wedge\eta(D)=0$ 
iff there exists $\alpha,\beta\in\mathbb{C}$, not both zero, such that \mbox{$\alpha\sigma(D)+\beta\eta(D)=0$.}
Then by the above lemma, for $D=p+q$
\begin{equation}\label{glizeri}
(\sigma\wedge\eta)(D)=0\Leftrightarrow\exists\alpha,\beta\in\mathbb{C}\textrm{ not both zero such that }
\left\{
\begin{array}{l}
    \alpha\sigma(p)+\beta\eta(p)=0  \\
     \alpha\sigma(q)+\beta\eta(q)=0
\end{array}
\right.
.\end{equation}
Let us consider three indipendent holomorphic 1-forms $\Omega_{1},\Omega_{2},\Omega_{3}\in H^{1,0}(C_{2})$, and 
\mbox{$W=\langle\Omega_{1},\Omega_{2},\Omega_{3}\rangle$}. 
We also denote by $V$ the corresponding subspace of $H^{1,0}(C)$: $V=\langle\omega_{1},\omega_{2},\omega_{3}\rangle\subset H^{1,0}(C)$. 
We are interested in the base locus $B_{W^{2}}$ of the linear system $|W^{2}|$, where
$W^{2}=\bigwedge^{2}W
\subset H^{2,0}(C_{2})$. 
\begin{pro}\label{baselocus}
\begin{enumerate}
  \item  If $V$ has a base point $p\in C$, then the curve $C_{p}=\{p+q\in C_{2} \ | \ q\in C\}$ is contained in the base locus $B_{W^{2}}$.
  \item  Suppose that $V$ is base-point-free and that the induced map
  $\phi_{V}:C\to\mathbb{P}V^{*}\cong\mathbb{P}^{2}$ is birational onto the image. 
  Then $B_{W^{2}}$ is finite.
\end{enumerate}
 \end{pro}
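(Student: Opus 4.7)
The plan is to interpret the vanishing of $(\omega_i\wedge\omega_j)(p+q)$ geometrically. Applying (\ref{glizeri}) to all three pairs $i<j$, the condition $D=p+q\in B_{W^{2}}$ (with $p\neq q$) is equivalent to the simultaneous vanishing of the three $2\times 2$ minors of the matrix
\[
M(p,q)=\begin{pmatrix} \omega_1(p) & \omega_2(p) & \omega_3(p) \\ \omega_1(q) & \omega_2(q) & \omega_3(q) \end{pmatrix},
\]
that is, to $\mathrm{rank}\,M(p,q)\leq 1$. When $V$ is base-point-free, neither row vanishes, so this is in turn equivalent to $\phi_V(p)=\phi_V(q)$ in $\mathbb{P}V^{*}\cong\mathbb{P}^{2}$.

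Part (1) is immediate from this reformulation. If $p\in C$ is a base point of $V$, the first row of $M(p,q)$ is identically zero, so the rank condition holds for every $q\in C$; hence $p+q\in B_{W^{2}}$ for all $q$, i.e.\ $C_{p}\subset B_{W^{2}}$. The diagonal point $D=2p$ is also in $B_{W^{2}}$ since all $\omega_i$ already vanish at $p$.

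For part (2), I want to show $B_{W^{2}}$ contains no irreducible curve. Suppose for contradiction that $\Sigma\subset B_{W^{2}}$ is irreducible of dimension one, and let $\Delta\subset C_{2}$ denote the image of the diagonal. If $\Sigma\neq\Delta$, I lift $\Sigma$ to an irreducible component $\widetilde{\Sigma}\subset C\times C$ of its preimage under the quotient $C\times C\to C_{2}$, chosen not to lie in the diagonal of $C\times C$. Generic points of $\widetilde{\Sigma}$ are pairs $(p,q)$ with $p\neq q$ and $\phi_V(p)=\phi_V(q)$. The projection $\pi_{1}\colon\widetilde{\Sigma}\to C$ cannot be constant (otherwise $\Sigma=C_{p_{0}}$ for some $p_{0}$, forcing $\phi_V$ to be constant and contradicting birationality); hence $\pi_{1}$ is dominant. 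Then for a generic $p\in C$ we obtain some $q\neq p$ with $\phi_V(p)=\phi_V(q)$, violating generic injectivity of $\phi_V$.

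The remaining case $\Sigma=\Delta$ is the main obstacle, since (\ref{glizeri}) does not apply at diagonal points and requires a direct local computation on $C_{2}$. Writing $\omega_k=f_k(t)\,dt$ in a local coordinate $t$ near $p$ and introducing the symmetric coordinates $u=t_{1}+t_{2}$, $v=t_{1}t_{2}$ on $C_{2}$, a short argument (applying L'Hopital to $f_i(t_{1})f_j(t_{2})-f_j(t_{1})f_i(t_{2})$ divided by $t_{1}-t_{2}$) shows that $(\omega_i\wedge\omega_j)(2p)$ is proportional to $f_i(p)f_j'(p)-f_j(p)f_i'(p)$. Hence $2p\in B_{W^{2}}$ if and only if $d\phi_V|_{p}=0$; since $\phi_V$ is birational onto its image it is a local immersion outside a finite set, so $\Delta\not\subset B_{W^{2}}$. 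This exhausts the cases and shows $B_{W^{2}}$ has no positive-dimensional component, hence is finite.
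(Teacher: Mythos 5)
Your proof is correct and follows essentially the same route as the paper: both reduce $p+q\in B_{W^{2}}$ (for $p\neq q$) to $\phi_{V}(p)=\phi_{V}(q)$ and $2p\in B_{W^{2}}$ to $d_{p}\phi_{V}=0$, and then invoke birationality of $\phi_{V}$; your ``no positive-dimensional component'' argument is just a repackaging of the paper's identification of $B_{W^{2}}$ with the nodes and cusps of the image plane curve $C'=\phi_{V}(C)$. The only point stated a bit quickly is that $2p\in B_{W^{2}}$ when $p$ is a base point of $V$, which follows either from closedness of the base locus or from the fact that $H^{0}(\omega_{C}(-p))/H^{0}(\omega_{C}(-2p))$ has dimension at most one.
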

\begin{proof}
\begin{enumerate}
  \item  Let $p$ be a base point of $V$. 
For any $q\in C$ we consider the evaluation morphism $ev_{q}:V\otimes \mathcal{O}_{C}\rightarrow \mathbb{C}_{q}$.
Then $\ker(ev_{q})$ has rank two, and we may suppose, up to a change of basis, that it is generated by $\omega_{1},\omega_{2}$. 
Let us consider $D=p+q\in C_{p}$. We want to show that $(\Omega_{i}\wedge \Omega_{j})(D)=0$ for $i,j=1,2,3$.
\\
Recalling (\ref{glizeri}) we see that $(\Omega_{i}\wedge \Omega_{j})(D)=0$ is certainly
satisfied since $\omega_{i}(p)=0$, $i=1,2,3$ and $\omega_{1}(q)=\omega_{2}(q)=0$.
  \item Let us consider the map
\[\begin{array}{llll }
      \phi_{V}: & C & \rightarrow & \mathbb{P}V^{*}\cong\mathbb{P}^{2}     \\
      &    p & \mapsto & [\omega_{1}(p):\omega_{2}(p):\omega_{3}(p)].
\end{array}
 \]
and define $C^{\prime}=\phi_{V}(C)$. 
Notice that we have a 1-1 correspondence
\[ \{D=p+q\in B_{W^{2}}, p\neq q\}\leftrightarrow \{\textrm{nodes of }C^{\prime}\}. \]
In fact
\[ D=p+q\in B_{W^{2}} \textrm{ if and only if }  \left\{ \begin{array}{ll}
\omega_{i}(p)=\lambda_{i}\omega_{1}(p) & \textmd{ $i=2,3$} \\
\omega_{i}(q)=\lambda_{i}\omega_{1}(q) & \textmd{ $i=2,3$} 
\end{array}
\right
.\]
for some $\lambda_{2},\lambda_{3}\in\mathbb{C}$.
But this means that 
\[
\begin{array}{ll}
\phi_{V}(p)=[\omega_{1}(p):\lambda_{2}\omega_{1}(p):\lambda_{3}\omega_{1}(p)]=[1:\lambda_{2}:\lambda_{3}]    &    \\
\phi_{V}(q)=[\omega_{1}(q):\lambda_{2}\omega_{1}(q):\lambda_{3}\omega_{1}(q)]=[1:\lambda_{2}:\lambda_{3}]      &   
\end{array}
,\]
i.e. $\phi_{V}(p)$ is a node (notice that we are using that $V$ is base-point-free).
\\
We also have a 1-1 correspondence \[
\{D=2p\in B_{W^{2}}\}\leftrightarrow \{\textrm{cusps of }C^{\prime}\}.
\]
To see this, recall that $(\omega_{i}\wedge\omega_{j})(2p)=0$ if and only if $\lambda_{i}\omega_{i}+\mu_{j}\omega_{j}\in H^{0}(\omega_{C}(-2p))$ for every $i,j$.
Let $D=2p$ be in $B_{W^{2}}$. It is not restrictive to assume that $\omega_{1},\omega_{2}\in H^{0}(\omega_{C}(-2p))$ and $\omega_{3}(p)\neq 0$.
In local coordinate near $p$ we have $\omega_{i}=f_{i}(z)dz$, $i=1,2$, with $f_{i}(0)=f_{i}^{\prime}(0)=0$. 
We also have $\phi_{V}(z)=(f_{1}(z),f_{2}(z))$ so that $d_{p}\phi_{V}=(f_{1}^{\prime}(0),f_{2}^{\prime}(0))=(0,0)$, i.e. $p$ is a cusp. 
\end{enumerate}
\end{proof}
\begin{oss}
If $C$ is general and $V$ has no base points then $\phi_{V}$ is birational to the image.
To see this suppose that it is not. 
Then by \cite[cor. 8.32 p.834]{acg2} the morphism factorizes as 
$\phi_{V}=\nu\circ g$, where $\nu:E\rightarrow C^{\prime}$ is the normalization, $E\cong\mathbb{P}^{1}$ and $g:C\rightarrow E$ is holomorphic. 
We have $\deg\phi_{V}=\deg g$ and since $C$ is not hyperelliptic $\deg g\geqslant 3$. If we denote $L=g^{*}\mathcal{O}_{\mathbb{P}^{1}}(1)$, we have
\[
 \omega_{C}\cong\phi_{V}^{*}\mathcal{O}_{C^{\prime}}(1)=g^{*}\nu^{*}\mathcal{O}_{C^{\prime}}(1)=g^{*}\mathcal{O}_{\mathbb{P}^{1}}(k)=L^{\otimes k},
  \]
for some $k\in \mathbb{Z}_{>0}$.
\\
The Petri map associated to $L$ is $\mu:H^{0}(L)\otimes H^{0}(L^{\otimes (k-1)})\rightarrow H^{0}(L^{\otimes k})$. 
If $s,t\in H^{0}(L)$, then $t\otimes s^{\otimes(k-1)}-s\otimes (s^{\otimes(k-2)}\otimes t)\in\ker\mu$, contradicting the injectivity of $\mu$ 
\cite[prop. 3.20 pag.795]{acg2}.
\end{oss}
\begin{oss}\label{-2p}
Arguing as in the proof of 1) above, one can show that if $V\subset H^{0}(\omega_{C}(-2p))$, then $2C_{p}\subset B_{W^{2}}$.
\end{oss}

\subsection{Infinitesimal variations of Hodge structures for $C_{2}$}
In the next section we will study the adjoint form on $C_{2}$. 
In order to construct it we need to study the infinitesimal variations of Hodge structures of $C_{2}$ and the varieties attached to them.
That is the content of this section.

First notice that under the isomorphism $H^{1,0}(C_{2})\cong H^{1,0}(C)$
and $H^{1}(T_{C_{2}})\cong H^{1}(T_{C})$ \cite[Thm. 10.1 pag. 242]{acg2}, the infinitesimal variations of Hodge structures of $C_{2}$ 
correspond to the ones of $C$.

Let us consider 
\[ \Gamma=\{(\xi,V)\textrm{ }|\textrm{ }V\subset\ker\xi\}\subset \mathbb{P}H^{1}(T_{C})\times G(3,H^{1,0}(C)),
\]
with  the projections $\pi_{1},\pi_{2}$ and $\Sigma=\pi_{2}(\Gamma)$. Note that 
\[
\pi_{2}^{-1}(V)=\{\xi\in H^{1}(T_{C})\textrm{ }|\textrm{ }\xi\cdot V=0\}.
\]
There is an isomorphism 
\begin{equation}\label{iso}
\begin{array}{ cll}
      \mathbb{G}(g-4,\mathbb{P}H^{0}(\omega_{C})^{*}) & \rightarrow & G(3,H^{1,0}(C)) \\
        L=\{\omega_{1}=\omega_{2}=\omega_{3}=0\} & \mapsto & V(L):=\langle\omega_{1},\omega_{2},\omega_{3}\rangle
\end{array}
\end{equation}

In the sequel we will always identify $\mathbb{P}H^{0}(C,\omega_{C})^{*}$ with the canonical space $\mathbb{P}^{g-1}$ and $C$
with its canonical image $\phi_{\omega_{C}}(C)$.
\\
Let us define $\mathcal{D}=\{W\in G(3,H^{1,0}(C)) \textrm{ with base points}\}$. 
Notice that $\dim\mathcal{D}=\dim G(3, H^{0}(\omega_{C}(-p)))+1=3(g-4)+1=3g-11$.
Geometrically, a point $W\in\mathcal{D}$ correspond under the above isomorphism to a $(g-4)$-dimensional linear subspace of $\mathbb{P}^{g-1}$ 
intersecting $C$.
\begin{lem}
\label{ivhs}
\begin{enumerate}
  \item $\dim\Gamma=\dim\Sigma=3g-10$.
  \item $\mathcal{D}\subset\Sigma$.
\end{enumerate}
 \end{lem}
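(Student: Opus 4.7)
The plan is to describe $\Gamma$ via the second projection using the multiplication map $\mu_{V}:V\otimes H^{0}(\omega_{C})\to H^{0}(\omega_{C}^{\otimes 2})$. Under the Serre duality identification $H^{1}(T_{C})\cong H^{0}(\omega_{C}^{\otimes 2})^{*}$, the condition $V\subset\ker\xi$ translates into $\xi$ annihilating $\operatorname{Im}(\mu_{V})$, so $\pi_{2}^{-1}(V)=\mathbb{P}\bigl(\Ann(\operatorname{Im}\mu_{V})\bigr)$ and $V\in\Sigma$ precisely when $\mu_{V}$ fails to be surjective, in which case $\dim\pi_{2}^{-1}(V)=\dim\operatorname{coker}(\mu_{V})-1$.

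For part (2), if $W\in\mathcal{D}$ has a base point $p\in C$, then $W\subset H^{0}(\omega_{C}(-p))$ forces $\operatorname{Im}(\mu_{W})\subset H^{0}(\omega_{C}^{\otimes 2}(-p))$. The latter is a hyperplane in $H^{0}(\omega_{C}^{\otimes 2})$ because $\omega_{C}^{\otimes 2}(-p)$ is non-special for $g\geqslant 2$, so $\operatorname{coker}(\mu_{W})\neq 0$ and hence $W\in\Sigma$.

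For part (1), the central observation is that $\mu_{V}$ always has a built-in Koszul kernel of dimension three, namely the image of $\bigwedge^{2}V\hookrightarrow V\otimes V\subset V\otimes H^{0}(\omega_{C})$ sending $\omega_{1}\wedge\omega_{2}$ to $\omega_{1}\otimes\omega_{2}-\omega_{2}\otimes\omega_{1}$. Globalising over $G:=G(3,H^{1,0}(C))$ and quotienting by the subbundle $\bigwedge^{2}\mathcal{V}$, where $\mathcal{V}$ is the tautological rank-$3$ bundle, one obtains a morphism of vector bundles of the same rank $3g-3$,
\[
\overline{\mu}:\bigl(\mathcal{V}\otimes H^{0}(\omega_{C})\bigr)/\bigwedge^{2}\mathcal{V}\longrightarrow H^{0}(\omega_{C}^{\otimes 2})\otimes\mathcal{O}_{G}.
\]
Its determinant is a section of a line bundle on $G$ whose zero scheme is exactly $\Sigma$. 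Non-triviality of this section reduces to exhibiting one $V$ with $\mu_{V}$ surjective: for a Petri-general curve, Max Noether's theorem together with the use of a quadric relation in $I_{2}(C)$ to rewrite any ``missing'' product $\omega\cdot\eta$ as a sum of products having at least one factor in $V$ produces such a $V$. Hence $\Sigma$ is a divisor in $G$ and $\dim\Sigma=3(g-3)-1=3g-10$. At a generic point of $\Sigma$ the cokernel of $\mu_{V}$ is one-dimensional, so the fibres of $\pi_{2}$ are generically reduced points, giving $\dim\Gamma=\dim\Sigma=3g-10$.

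The main technical obstacle is the non-triviality of $\det\overline{\mu}$: concretely, one must exhibit a base-point-free $V$ for which $\mu_{V}$ is surjective, which amounts to a Petri-type generic-position statement on the interaction between a $3$-dimensional subspace of $H^{0}(\omega_{C})$ and the ideal $I_{2}(C)$ of quadrics through the canonical curve. This is standard for a general non-hyperelliptic curve of genus $g\geqslant 4$ and uses Max Noether's theorem as input.
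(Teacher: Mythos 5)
Your proposal is correct in substance and follows the same circle of ideas as the paper, so let me focus on where the two differ and where your write-up leaves the real work implicit. For part (2) the paper argues dually to you: it exhibits an explicit nonzero element of $\Ann W$, namely the Schiffer variation $\theta_{p}$ at the base point $p$, whose kernel is $H^{0}(\omega_{C}(-p))\supset W$; your observation that $\operatorname{Im}(\mu_{W})\subset H^{0}(\omega_{C}^{\otimes 2}(-p))$ is exactly the dual statement, since $\theta_{p}$ is, up to scale, the evaluation functional cutting out that hyperplane. Both versions are complete. For part (1) the paper is much terser: it quotes \cite[Lemma 2.4]{r} for the existence of a $V$ with $\dim\Ann V=1$ (so that the generic fibre of $\pi_{2}$ is a point and $\dim\Gamma=\dim\Sigma$) and \cite[p.~570]{pz} for the fact that $\Sigma$ contains a divisor. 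Your determinantal description of $\Sigma$ as the zero scheme of $\det\overline{\mu}$ is the right way to see that $\Sigma$ is pure of codimension one once the section is nonzero, and reducing nonvanishing to the surjectivity of $\mu_{V}$ for a single $V$ (equivalently $I_{2}(C)\cap V\cdot H^{0}(\omega_{C})=0$ inside $\operatorname{Sym}^{2}H^{0}(\omega_{C})$, a dimension count with expected excess zero) is the correct reformulation. The two places where your argument is an assertion rather than a proof are precisely the statements the paper outsources: (i) the existence of one $V$ with $\mu_{V}$ surjective, which you only sketch via Max Noether and Petri; and (ii) the claim that $\operatorname{coker}(\mu_{V})$ is one-dimensional at a generic point of \emph{every} component of $\Sigma$ --- this is what rules out $\dim\Gamma>\dim\Sigma$, and a single $V$ with one-dimensional annihilator only controls the component through that $V$. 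Both points are exactly the content of the cited Lemma 2.4 of \cite{r}, so your proof is fine modulo that reference, but as written these assertions carry the genuine content of the lemma rather than being routine steps.
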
 
\begin{proof}
\begin{enumerate}
  \item Notice that there exists $V\in G(3,H^{1,0}(C))$ such that $\dim\Ann V=1$ 
(see for example \cite[Lemma 2.4]{r}). 
This means that the generic fibre of $\pi_{2}$ has dimension zero, hence $\dim\Sigma=\dim\Gamma$.
To see that the dimension is $3g-10$ it suffices to recall that $\dim G(3,H^{1,0}(C))=3g-9$ and $\Sigma$ contains a divisor \cite[p.570]{pz}.
 \item Let us suppose that $W\in\mathcal{D}$ and let $p$ be a base point.
Recall that the Schiffer variation $\theta_{p}$ has kernel $H^{0}(C,\omega_{C}(-p))$ \cite{grif}. 
Then the Schiffer variation $\theta_{p}$ annihilates $W$.
\end{enumerate}
\end{proof}

\subsection{Computation of the adjoint class}
\label{tecnico}
We want to determine where does the adjoint class vanish.
From now on we will assume that $C$ is a generic curve.
For the convenience of the reader we first recall some notation.
Let us consider $\xi\in H^{1}(C,T_{C})\cong H^{1}(C_{2},T_{C_{2}})$;
then $\xi$ defines two first order deformations $\mathcal{C}\to\textrm{Spec}\frac{\displaystyle\mathbb{C}[t]}{\displaystyle(t^{2})}$
and $\mathcal{C}_{2}\to\textrm{Spec}\frac{\displaystyle\mathbb{C}[t]}{\displaystyle(t^{2})}$ of $C$ and $C_{2}$ respectively.
The associated extension classes are
\begin{equation}
0\to\mathcal{O}_{C}\to\Omega^{1}_{\mathcal{C}|C}\to\omega_{C}\to0
\end{equation}
\begin{equation}
\label{c2}
0\to\mathcal{O}_{C_{2}}\to\Omega^{1}_{\mathcal{C}_{2}|C_{2}}\to\Omega^{1}_{C_{2}}\to0.
\end{equation}
We are now ready to state and prove the following technical lemma.
\begin{lem}
\label{tech}
Let $\omega_{1},\omega_{2},\omega_{3}\in H^{1,0}(C)$ be independent and $\Omega_{1},\Omega_{2},\Omega_{3}$ be the corresponding forms in $H^{1,0}(C_{2})$.
Suppose that there exists $\xi\in H^{1}(C,T_{C})\cong H^{1}(C_{2},T_{C_{2}})$ such that $\xi\cdot\omega_{i}=0$ for $i=1,2,3$ and the adjunction
classes $[\omega_{\xi,\omega_{k},\omega_{l}}]$ on $C$ vanishes for every $k,l$.
\\
Then, for every $q\in C$, there exists some representative of $[\omega_{\xi,\Omega_{1},\Omega_{2},\Omega_{3}}]$ that vanishes on $C_{q}=\{x+q \ | \ x\in C\}$.
\end{lem}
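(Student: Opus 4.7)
The plan is a local computation on $C_2$ off the diagonal, combined with a determinant identity among the adjoint forms on $C$. Fix a point $D=x+q\in C_q$ with $x\neq q$ and local coordinates $(z_1,z_2)$ near $D$ with $z_1$ near $x$ and $z_2$ near $q$; then $\omega_i=f_i(z_1)\,dz_1$ near $x$ and $\omega_i=h_i(z_2)\,dz_2$ near $q$. Choose liftings $\widetilde{\omega}_i=f_i\,dz_1+g_i\,dt$ near $x$ and $\widetilde{\omega}_i=h_i\,dz_2+k_i\,dt$ near $q$ of $\omega_i$ to $\Omega^1_{\mathcal{C}|C}$. Off the diagonal $\mathcal{C}_2$ is locally the fibre product $\mathcal{C}\times_{A}\mathcal{C}$, so a lifting of $\Omega_i$ in $\Omega^1_{\mathcal{C}_2|C_2}$ can be taken to be $\widetilde{\Omega}_i=f_i\,dz_1+h_i\,dz_2+(g_i+k_i)\,dt$, and the local formula for the adjoint form represents $[\omega_{\xi,\Omega_1,\Omega_2,\Omega_3}]$ by $D(z_1,z_2)\,dz_1\wedge dz_2$ with
\[
D=\det\begin{pmatrix} f_1 & h_1 & g_1+k_1\\ f_2 & h_2 & g_2+k_2\\ f_3 & h_3 & g_3+k_3\end{pmatrix}.
\]
Splitting the last column by multilinearity gives $D=D_g+D_k$; expanding $D_g$ along the middle column yields $D_g=-h_1A_{23}+h_2A_{13}-h_3A_{12}$, where $A_{ij}:=f_ig_j-f_jg_i$ is the local expression on $C$ of $\omega_{\xi,\omega_i,\omega_j}$.

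By hypothesis, $A_{ij}=\alpha^i_{ij}f_i+\alpha^j_{ij}f_j$ for constants $\alpha^\bullet_\bullet$. The key input is the tautological identity
\[
f_1A_{23}-f_2A_{13}+f_3A_{12}=0,
\]
which is the vanishing of a $3\times 3$ determinant with a repeated column. Substituting the expressions for $A_{ij}$, this becomes
\[
(\alpha^2_{23}-\alpha^1_{13})\,\omega_1\omega_2+(\alpha^3_{23}+\alpha^1_{12})\,\omega_1\omega_3+(\alpha^2_{12}-\alpha^3_{13})\,\omega_2\omega_3=0
\]
in $H^0(C,\omega_C^{\otimes 2})$. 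Genericity of $C$ forces the three products $\omega_i\omega_j$ to be linearly independent, so that
\[
\alpha^1_{13}=\alpha^2_{23},\qquad \alpha^1_{12}=-\alpha^3_{23},\qquad \alpha^3_{13}=\alpha^2_{12}.
\]

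Restricting $D$ to $C_q=\{z_2=0\}$ gives a combination $B_1f_1+B_2f_2+B_3f_3$, whose coefficients depend on $h_j(q),k_j(q)$ and the $\alpha$'s; modifying the representative by $\sum_{i<j}\lambda_{ij}\Omega_i\wedge\Omega_j\in W^2$ shifts it by $\sum\lambda_{ij}(f_ih_j(q)-f_jh_i(q))$. The associated $3\times 3$ linear system in the $\lambda_{ij}$ has rank $2$ with cokernel generated by $(h_1(q),h_2(q),h_3(q))$, so solvability amounts to the single identity $\sum_i h_i(q)B_i=0$. A direct calculation shows that the $k$-contributions cancel identically and that the $\alpha$-contributions cancel exactly by the three symmetries above. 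A solution $\lambda_{ij}$ therefore exists and yields a representative of $[\omega_{\xi,\Omega_1,\Omega_2,\Omega_3}]$ vanishing on $C_q\setminus\{2q\}$, hence on all of $C_q$ by continuity. The main obstacle is spotting the determinant identity, which is the bridge that converts the vanishing hypothesis on $C$ into precisely the symmetries required to verify the cokernel condition on $C_2$.
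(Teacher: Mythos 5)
Your argument is correct, and it reaches the conclusion by a genuinely different route than the paper. The paper's proof first \emph{normalizes the liftings on $C$}: using the vanishing of the $[\omega_{\xi,\omega_k,\omega_l}]$ it arranges $L_{\xi}(\widetilde{\omega}_1\wedge\widetilde{\omega}_2)=0$, deduces $\widetilde{\omega}_2=f\widetilde{\omega}_1$ with $f=\omega_2/\omega_1$, and then shows that the remaining coefficients are forced to vanish because a nonzero relation would produce a rank-$3$ quadric through the canonical curve; with all $\widetilde{\omega}_k\wedge\widetilde{\omega}_l=0$ the induced liftings $\widetilde{\Omega}_i$ become proportional along $C_q$ and the $3\times 3$ determinant dies there with no further work. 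You instead keep arbitrary liftings, expand the determinant by cofactors, and convert the hypothesis into the three symmetries $\alpha^1_{13}=\alpha^2_{23}$, $\alpha^1_{12}=-\alpha^3_{23}$, $\alpha^3_{13}=\alpha^2_{12}$ via the Cramer identity $f_1A_{23}-f_2A_{13}+f_3A_{12}=0$, then verify the solvability condition $\sum_i h_i(q)B_i=0$ for a $W^2$-correction. The two genericity inputs are literally the same fact (linear independence of $\omega_1\omega_2,\omega_1\omega_3,\omega_2\omega_3$ is exactly the absence of a rank-$\leqslant 3$ quadric through the canonical curve), and your symmetry $\alpha^3_{13}=\alpha^2_{12}$ is precisely the coefficient $\beta_3$ that the paper kills. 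What the paper's normalization buys is a single representative that vanishes on $C_q$ for every $q$ simultaneously, whereas your corrected representative depends on $q$ through the $\lambda_{ij}$; this is harmless for the lemma as stated and for its use in Proposition \ref{vanish}, where only one $q$ is needed at a time. Two small points to make explicit: (i) your cokernel description assumes $(h_1(q),h_2(q),h_3(q))\neq 0$; in the degenerate case where $q$ is a base point of $\langle\omega_1,\omega_2,\omega_3\rangle$ every term of $D|_{C_q}$ carries a factor $h_j(q)$ and the restriction is already zero, so no correction is needed; (ii) you should note that the symmetrized local lifting $f_i\,dz_1+h_i\,dz_2+(g_i+k_i)\,dt$ is the local expression of a genuine global section of $\Omega^{1}_{\mathcal{C}_2|C_2}$ (the $S_2$-invariant descent of $pr_1^*\widetilde{\omega}_i+pr_2^*\widetilde{\omega}_i$, extended across the diagonal), since the adjoint class is only defined via global liftings -- this is the same implicit step the paper takes via the compatibility of the isomorphisms $H^{1,0}(C_2)\cong H^{1,0}(C)$ and $H^1(T_{C_2})\cong H^1(T_C)$.
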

\begin{proof}
By hypothesis the forms $\omega_{1},\omega_{2},\omega_{3}$ lift to $H^{0}(C,\Omega^{1}_{\mathcal{C}|C})$.
\begin{claim}
 It is possible to choose liftings $\widetilde{\omega}_{1},\widetilde{\omega}_{2},\widetilde{\omega}_{3}\in H^{0}(C,\Omega^{1}_{\mathcal{C}|C})$
such that \mbox{$\widetilde{\omega}_{k}\wedge\widetilde{\omega}_{l}=0$} in $H^{0}(C,\bigwedge^{2}\Omega^{1}_{\mathcal{C}|C})$ for every $k,l$.
\end{claim}
\begin{proof}
Recall that $[\omega_{\xi,\omega_{k},\omega_{l}}]\in\frac{\displaystyle H^{1,0}(C)}{\displaystyle\langle\omega_{k},\omega_{l}\rangle}$ and that,
given liftings $\widetilde{\omega}_{1},\widetilde{\omega}_{2},\widetilde{\omega}_{3}$, a representative of $[\omega_{\xi,\omega_{k},\omega_{l}}]$ is 
$L_{\xi}(\widetilde{\omega}_{k}\wedge\widetilde{\omega}_{l})$ (see Section \ref{adjunction}).
Then by hypothesis the adjoint classes on $C$ have representatives
\begin{align}
 L_{\xi}(\widetilde{\omega}_{1}\wedge\widetilde{\omega}_{2})=\alpha_{1}\omega_{1}+\alpha_{2}\omega_{2}     \\
L_{\xi}(\widetilde{\omega}_{1}\wedge\widetilde{\omega}_{3})=\beta_{1}\omega_{1}+\beta_{3}\omega_{3} \\
\label{terza}
L_{\xi}(\widetilde{\omega}_{2}\wedge\widetilde{\omega}_{3})=\gamma_{2}\omega_{2}+\gamma_{3}\omega_{3}
\end{align}
with the $\alpha$'s, $\beta$'s and $\gamma$'s complex numbers.
One can choose $\widetilde{\omega}_{1},\widetilde{\omega}_{2},\widetilde{\omega}_{3}$ in such a way that $L_{\xi}(\widetilde{\omega}_{1}\wedge\widetilde{\omega}_{2})=0$ and $\beta_{1}=0$.
This means that $\widetilde{\omega}_{1}\wedge\widetilde{\omega}_{2}=0$ in $H^{0}(C,\bigwedge^{2}\Omega^{1}_{\mathcal{C}|C})$ or, equivalently, that
$\widetilde{\omega}_{2}=f\widetilde{\omega}_{1}$ for some meromorphic function $f$ on $C$.
It is easy to see that $f=\frac{\omega_{2}}{\omega_{1}}$.
Then 
\[
L_{\xi}(\widetilde{\omega}_{2}\wedge\widetilde{\omega}_{3})=fL_{\xi}(\widetilde{\omega}_{1}\wedge\widetilde{\omega}_{3})=f(\beta_{3}\omega_{3}).
\]
Recalling (\ref{terza}) we find $\gamma_{2}\omega_{2}+\gamma_{3}\omega_{3}=\beta_{3}f\omega_{3}$.
If we multiply by $\omega_{1}$ we get the following equation in $H^{0}(C,\omega^{\otimes2}_{C})$
\[
\gamma_{2}\omega_{1}\omega_{2}+\gamma_{3}\omega_{1}\omega_{3}=\beta_{3}\omega_{2}\omega_{3}
.\]
This is the equation of a rank $3$ quadric containing the canonical curve.
Since $C$ is generic there are no such quadrics, then $\gamma_{2}=\gamma_{3}=\beta_{3}=0$ and we have proved our claim.
In particular $\widetilde{\omega}_{3}=g\widetilde{\omega}_{1}$, where $g=\frac{\omega_{3}}{\omega_{1}}$. 
\end{proof}
From now on we fix the liftings $\widetilde{\omega}_{1},\widetilde{\omega}_{2},\widetilde{\omega}_{3}$ found above.

Now let us consider the extension (\ref{c2}).
Restriction to $C_{q}$ yeld an element $\xi_{C}\in H^{1}(C_{q},T_{C_{2}|C_{q}})$.
Compatibility of the cup product with the isomorphisms $H^{1}(C,T_{C})\cong H^{1}(C_{2},T_{C_{2}})$, $H^{1,0}(C)\cong H^{1,0}(C_{2})$ enables to find 
liftings $\widetilde{\Omega}_{i}\in H^{0}(C_{2},\Omega^{1}_{\mathcal{C}_{2}|C_{2}})$ mapping to $\widetilde{\omega}_{i}$ under the composition 
$H^{0}(C_{2},\Omega^{1}_{\mathcal{C}_{2}|C_{2}})\to H^{0}(C_{q},\Omega^{1}_{\mathcal{C}_{2}|C_{q}})\to H^{0}(C_{q},\Omega^{1}_{\mathcal{C}|C_{q}})$.

Let us take local coordinates $(z,w)$ on $C_{2}$ such that $\{w=0\}$ is a local equation for $C_{q}$ in $C_{2}$.
Then, locally, 
\[
\widetilde{\omega}_{1}=\alpha(z)dz+h(z)dt, \textrm{  }\  \ \widetilde{\omega}_{2}=f\big(\alpha(z)dz+h(z)dt\big),\  \ \widetilde{\omega}_{3}=g\big(\alpha(z)dz+h(z)dt\big) 
\]
and 
\begin{align}
\label{}
   \widetilde{\Omega}_{1}=\alpha(z,w)dz+\beta(z,w)dw+h(z,w)dt, & \ \ \widetilde{\Omega}_{2}=f\big(\alpha(z,w)+\beta(z,w)dw+h(z,w)dt\big), \\ 
\widetilde{\Omega}_{3}=g\big(\alpha(z,w)dz+\beta(z,w)dw+h(z,w)dt\big), & &
 \end{align}
 where $\alpha(z,0)=\alpha(z)$ and $h(z,0)=h(z)$.
From the expression of the adjoint class in local coordinates (see Section \ref{adjunction}) we see that 
$L_{\xi}(\widetilde{\Omega}_{1}\wedge\widetilde{\Omega}_{2}\wedge\widetilde{\Omega}_{3})=0$.
\end{proof}
We can now prove the main result of this section.
\begin{pro}
\label{vanish}
Let us suppose that $W\in G(3,H^{0}(\omega_{C}(-p)))$ and let $\xi=\theta_{p}$ be the Schiffer variation of $p$.
Then $\omega_{\xi,W}=0$.
\end{pro}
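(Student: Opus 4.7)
The plan is to apply Lemma \ref{tech} directly, with $\xi = \theta_p$ and $W$ identified with the corresponding subspace of $H^{1,0}(C_2)$. The first hypothesis of that lemma, namely $\theta_p \cdot \omega_i = 0$ for every $\omega_i \in W$, is automatic: the cup-product kernel of $\theta_p$ on $H^{1,0}(C)$ is precisely $H^0(\omega_C(-p))$ (as recalled in the proof of Lemma \ref{ivhs}(2)), and $W$ is contained in this kernel by hypothesis.

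For the second hypothesis I must check that every adjoint class $[\omega_{\theta_p, \omega_k, \omega_l}]$ on the curve $C$ vanishes. Since both $\omega_k, \omega_l \in H^0(\omega_C(-p))$, the point $p$ is a common zero, so it appears in the fixed part $D_{kl}$ of the pencil $|\langle\omega_k, \omega_l\rangle|$. By Remark \ref{curves}, the class vanishes iff $\theta_p$ lies in $\ker\bigl(H^1(T_C) \to H^1(T_C(D_{kl}))\bigr)$. The short exact sequence $0 \to T_C \to T_C(p) \to T_C(p)|_p \to 0$, together with $H^0(T_C(p)) = 0$ for $g \geq 4$, identifies the Schiffer line $\langle\theta_p\rangle$ with $\ker\bigl(H^1(T_C) \to H^1(T_C(p))\bigr)$, and this kernel is contained in $\ker\bigl(H^1(T_C) \to H^1(T_C(D_{kl}))\bigr)$ because $D_{kl} \geq p$. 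Thus the hypothesis is met.

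Lemma \ref{tech} now furnishes, for any $q \in C$, a representative of $[\omega_{\theta_p, \Omega_1, \Omega_2, \Omega_3}]$ vanishing on $C_q$. In fact its proof exhibits the concrete global representative $L_{\xi}(\widetilde{\Omega}_1 \wedge \widetilde{\Omega}_2 \wedge \widetilde{\Omega}_3) \in H^0(C_2, \omega_{C_2})$, whose local expression in coordinates $(z,w)$ around $C_q = \{w = 0\}$ is the determinant of a $3 \times 3$ matrix whose second and third rows are scalar multiples ($f = \omega_2/\omega_1$ and $g = \omega_3/\omega_1$) of the first, hence vanishes identically in an open neighborhood of $C_q$. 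By the identity principle on the connected complex manifold $C_2$ this global section is identically zero, so $[\omega_{\theta_p, W}] = 0$ in $H^0(C_2, \omega_{C_2})/W^2$; the adjoint form $\omega_{\xi, W}$, being the representative orthogonal to $W^2$, is then $0$. The main obstacle is this final step: extracting from Lemma \ref{tech} the global vanishing of the representative rather than merely its vanishing on $C_q$, which requires reading into the determinantal local formula and invoking the identity theorem on $C_2$.
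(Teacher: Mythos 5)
Your verification of the hypotheses of Lemma \ref{tech} is correct and is exactly Step 1 of the paper's argument: $W\subset H^{0}(\omega_{C}(-p))=\ker\theta_{p}$, and since $p$ lies in the fixed part of each pencil $\langle\omega_{k},\omega_{l}\rangle$ while $\theta_{p}$ generates $\ker\bigl(H^{1}(T_{C})\to H^{1}(T_{C}(p))\bigr)$, Remark \ref{curves} kills the one--dimensional adjoint classes. The gap is in your final step. The proportionality of the rows of the determinant holds only \emph{along} $C_{q}=\{w=0\}$, not on a neighbourhood of it: the liftings $\widetilde{\Omega}_{i}$ are constrained only to restrict, over $C_{q}$, to the liftings $\widetilde{\omega}_{i}$ on $C_{q}\cong C$, and it is there that $\widetilde{\omega}_{2}=f\widetilde{\omega}_{1}$, $\widetilde{\omega}_{3}=g\widetilde{\omega}_{1}$. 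Off $C_{q}$ no such relation can hold: if $\Omega_{2}$ were a meromorphic function times $\Omega_{1}$ on an open subset of $C_{2}$, then $\Omega_{1}\wedge\Omega_{2}$ would vanish there, whereas $\Omega_{1}\wedge\Omega_{2}=\omega_{1}\wedge\omega_{2}\neq0$ in $H^{2,0}(C_{2})\cong\bigwedge^{2}H^{0}(\omega_{C})$ because $\omega_{1},\omega_{2}$ are independent. So the determinant vanishes on the curve $C_{q}$ only; the identity principle does not apply, and you cannot conclude that the global representative is the zero section. (Had your reading been correct, Lemma \ref{tech} would already assert identical vanishing and Proposition \ref{vanish} would be a one--line corollary; the lemma is stated, deliberately, only as vanishing on $C_{q}$, and the representative it produces is moreover allowed to depend on $q$.)

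What is missing is the entire mechanism the paper uses after this point. One completes $\omega_{1},\omega_{2},\omega_{3}$ by $\alpha_{1},\dots,\alpha_{g-4}$ to a basis of $H^{0}(\omega_{C}(-p))$ and by a form $\sigma$ with $\sigma(p)\neq0$, and writes the representative as $\Upsilon+\Phi+\Psi+\Gamma$ with respect to the induced basis of $\bigwedge^{2}H^{0}(\omega_{C})$; the pieces $\Phi,\Psi,\Gamma$ are independent of the liftings, while a change of liftings only moves $\Upsilon$ inside $W^{2}$. Applying Lemma \ref{tech} at $q=p$ and at suitably generic auxiliary points $p_{1},p_{2},p_{3},q_{1},\dots,q_{g-4}$, and analysing which basis two--forms vanish on the corresponding curves $C_{p_{i}}$ and $C_{q_{j}}$ (via Lemma \ref{zero} and (\ref{glizeri})), one kills first $\Gamma$, then $\Phi$, then $\Psi$. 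The conclusion is that the representative lies in $W^{2}$, i.e.\ the adjoint class vanishes modulo $W^{2}$, and hence the adjoint form, being the representative orthogonal to $W^{2}$, is zero --- not that the representative itself is identically zero (the $\Upsilon$ part survives in general). Your proposal bypasses all of this with an identity--principle step that is not available.
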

\begin{proof}
Let us fix a basis $\omega_{1},\omega_{2},\omega_{3}$ of $W$ and take $\alpha_{1},\dots,\alpha_{g-4}$ that complete 
the $\omega$'s to a basis of $H^{0}(C,\omega_{C}(-p))$.
Then choose $\sigma\in H^{0}(C,\omega_{C})$ not vanishing in $p$.
If $\Omega_{1},\Omega_{2},\Omega_{3}\in H^{1,0}(C_{2})$ correspond to the $\omega$'s, choosing liftings $\widetilde{\Omega}_{1},\widetilde{\Omega}_{2},\widetilde{\Omega}_{3}$
we get a representative of $[\omega_{\xi,\Omega_{1},\Omega_{2},\Omega_{3}}]$: 
\[
L_{\xi}(\widetilde{\Omega}_{1}\wedge\widetilde{\Omega}_{2}\wedge\widetilde{\Omega}_{3})=\Upsilon+\Phi+\Psi+\Gamma
,\]
where $\Upsilon=\sum_{i,j=1}^{3}a_{ij}\omega_{i}\wedge\omega_{j}$, $\Phi=\sum_{k\leqslant3,l\leqslant g-4}b_{kl}\omega_{k}\wedge\alpha_{l}$,
$\Psi=\sum_{s,t=1}^{g-4}c_{st}\alpha_{s}\wedge\alpha_{t}$ and $\Gamma=\sigma\wedge(\sum_{r=1}^{3}d_{r}\omega_{r}+\sum_{m=1}^{g-4}e_{m}\alpha_{m})$.
We remark that $\Phi,\Psi,\Gamma$ are independent on the chosen liftings.
\medskip
\\
\textit{Step 1. }$\Gamma=0$
\\
We claim that we are in the hypothesis of Lemma \ref{tech}.
In fact $W$ has base point $p$ and the Schiffer variation $\xi$ is in the kernel of $H^{1}(C,T_{C})\to H^{1}(C,T_{C}(p))$ (cf. \cite[p. 273]{grif}).
Then by Remark \ref{curves} the adjoint classes $[\omega_{\xi,\omega_{i},\omega_{j}}]$ vanish for $i,j=1,2,3$.
\\
It follows from Lemma \ref{tech} that some representative of $[\omega_{\xi,\Omega_{1},\Omega_{2},\Omega_{3}}]$ vanishes on $C_{p}$.
Up to a change of the liftings, that will vary the coefficients of $\Upsilon$, we can suppose that
 $L_{\xi}(\widetilde{\Omega}_{1}\wedge\widetilde{\Omega}_{2}\wedge\widetilde{\Omega}_{3})$ vanishes on $C_{p}$.
Since this is also the case for $\Upsilon,\Phi,\Psi$,
we see that $\Gamma$ also vanishes on $C_{p}$. 

Let us consider a point $x+p\in C_{p}$ and recall from (\ref{glizeri}) that 
\[
\Gamma(x+p)=0 \Leftrightarrow
\lambda\sigma+\mu\big(\sum_{r=1}^{3}d_{r}\omega_{r}+\sum_{m=1}^{g-4}e_{m}\alpha_{m}\big)\in H^{0}(C,\omega_{C}(-x-p)), \textrm{ \  
$\lambda,\mu\in\mathbb{C}$ not both zero.}
\]
Then, since $\sum_{r=1}^{3}d_{r}\omega_{r}+\sum_{m=1}^{g-4}e_{m}\alpha_{m}$ vanishes in $p$ and $\sigma$ does not, we get that $\lambda=0$ in the above expression.
This implies that $\sum_{r}d_{r}\omega_{r}(x)+\sum_{m}e_{m}\alpha_{m}(x)=0$.
Since this hold for every $x\in C$ we get that all $d_{r}$'s and $e_{m}$'s are zero.
\medskip
\\
\textit{Step 2. }$\Phi=0$.
\\
Choose $p_{1},p_{2},p_{3},q_{1},\dots,q_{g-4}\in C$ generic points such that
\begin{enumerate}
  \item $\alpha_{i}(p_{j})=0$ for every $i,j$
  \item $\omega_{i}(p_{j})=0$ iff $i\neq j$
  \item $\alpha_{i}(q_{j})=0$ iff $i\neq j$ and $\omega_{3}(q_{j})\neq 0$ for every $j$.
\end{enumerate}
Then $\Psi$, $\omega_{i}\wedge\alpha_{j}$ vanish on $C_{p_{1}}$ for $i=2,3$, $j=1,\dots,g-4$.
We get 
\[
L_{\xi}(\widetilde{\Omega}_{1}\wedge\widetilde{\Omega}_{2}\wedge\widetilde{\Omega}_{3})_{|C_{p_{1}}}=
\Big(\omega_{1}\wedge(a_{12}\omega_{2}+a_{23}\omega_{3}+\sum b_{1j}\alpha_{j})\Big)_{|C_{p_{1}}}.
\]
As before we can suppose that  $L_{\xi}(\widetilde{\Omega}_{1}\wedge\widetilde{\Omega}_{2}\wedge\widetilde{\Omega}_{3})_{|C_{p_{1}}}=0$.
The form $a_{12}\omega_{2}+a_{23}\omega_{3}+\sum_{j=1}^{g-4} b_{1j}\alpha_{j}$ vanishes in $p_{1}$ while $\omega_{1}$ does not. 
Exactly as in Step 1 we conclude that $b_{1j}=0$ for all $j$.
In the same way, using $p_{2}$ and $p_{3}$, one gets that $b_{2j}=b_{3j}=0$.
\medskip
\\
\textit{Step 3. }$\Psi=0$.
\\
Let $p_{1},p_{2},p_{3},q_{1},\dots,q_{g-4}\in C$ as before.
We take another basis $\{\eta_{1},\eta_{2},\eta_{3}\}$ of $W$ defined as 
$\eta_{i}=\omega_{3}(q_{1})\omega_{i}-\omega_{i}(q_{1})\omega_{3}$ for $i=1,2$ and $\eta_{3}=\omega_{3}$.
Notice that $\eta_{1}(q_{1})=\eta_{2}(q_{1})=0$.
Reasoning as in Step 2 we see that
\[
\Big(\eta_{3}\wedge(a_{13}\eta_{1}+a_{23}\eta_{2})+\alpha_{1}\wedge(\sum_{j=1}^{g-4} c_{1j}\alpha_{j})\Big)_{|C_{q_{1}}}=0.
\] 
Since $\eta_{3}\wedge\eta_{1}$, $\alpha_{1}\wedge(\sum_{j=1}^{g-4} c_{1j}\alpha_{j})$ vanish on $q_{1}+p_{2}$ and $\eta_{3}\wedge\eta_{2}$ 
does not we must have $a_{23}=0$.
A similar argument involving $q_{1}+p_{1}$ shows that $a_{13}=0$.
Then \mbox{$\Big(\alpha_{1}\wedge(\sum_{j=1}^{g-4} c_{1j}\alpha_{j})\Big)_{|C_{q_{1}}}=0$} which implies $c_{1j}=0$ for $j=1,\dots,g-4$.
\\
Using the other points $q_{2},\dots,q_{g-4}$ one can show in the same way that $c_{ij}=0$.
\end{proof}
We now explain how the infinitesimal invariant $\delta\nu_{2}(0)$ gives a section of a suitable vector bundle on $\Gamma$.
Let us call $S$ the tautological bundle over $G(g-3,H^{1,0}(C))$.
The infinitesimal invariant, according to Theorem \ref{adj}, can be interpreted as the following morphism of vector bundles, which we call again 
$\delta\nu_{2}(0)$ by abuse of notation:
\[
\delta\nu_{2}(0): \big(\pi_{1}^{*}\mathcal{O}_{\mathbb{P}H^{1}(T_{C_{2}})}(-1)\otimes\pi_{2}^{*}\bigwedge^{3}S\big)_{|\Gamma}\otimes\big(\bigwedge^{2}H^{0}(C,\omega_{C})\otimes\mathcal{O}_{\Gamma}\big)\rightarrow
\mathcal{O}_{\Gamma},\]
which on the fiber over a point $(\xi,W)\in\Gamma$ is defined as follows. 
 Let us fix a basis $\{\omega_{1},\omega_{2},\omega_{3}\}$ of $W$. 
 Then
\[
\begin{array}{lrrc}
\delta\nu_{2}(0): & \langle\xi\rangle\otimes\bigwedge^{3}W\otimes\bigwedge^{2}H^{0}(C,\omega_{C}) & \rightarrow &
\mathbb{C} \\
& \xi\otimes\omega_{1}\wedge\omega_{2}\wedge\omega_{3}\otimes\sigma_{1}\wedge\sigma_{2} & \mapsto & 
2\int_{C_{2}}\omega_{\xi,\Omega_{1},\Omega_{2},\Omega_{3}}\wedge\overline{\sigma}_{1}\wedge\overline{\sigma}_{2} 
\end{array}
.\]
If we call $E=\big(\pi_{1}^{*}\mathcal{O}_{\mathbb{P}H^{1}(T_{C_{2}})}(-1)\otimes\wedge^{3}S\big)_{|\Gamma}\otimes
\big(\bigwedge^{2}H^{0}(C,\omega_{C})\otimes\mathcal{O}_{\Gamma}\big)$, the dual
 of the above morphism gives a section $\alpha\in H^{0}(\Gamma,E^{*})$.
We call $Z$ its zero locus.
Recall also the variety $\mathcal{D}=\{W\in G(3,H^{1,0}(C)) \textrm{ with base points}\}$.
We can summarize the previous results in the following theorem 
\begin{teo}\label{torelli} 
 $\pi_{2}(Z)=\mathcal{D}$. 
\end{teo}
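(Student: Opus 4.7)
The plan is to establish the two inclusions $\mathcal{D} \subset \pi_2(Z)$ and $\pi_2(Z) \subset \mathcal{D}$ separately, after first reinterpreting the zero locus $Z$ in terms of the vanishing of the adjoint form.

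\textbf{Step 1 (translation).} By Theorem \ref{adj}, the functional $\alpha(\xi, W)$ vanishes exactly when
\[
\int_{C_2} \omega_{\xi, W} \wedge \overline{\sigma}_1 \wedge \overline{\sigma}_2 = 0
\]
for every $\sigma_1, \sigma_2 \in H^0(C, \omega_C)$. Under the isomorphism $H^0(C_2, \omega_{C_2}) \cong \bigwedge^2 H^0(C, \omega_C)$, this is the Hodge--Riemann pairing of $\omega_{\xi, W} \in H^{2,0}(C_2)$ against an arbitrary element of $H^{2,0}(C_2)$. Since that pairing is positive definite, and the adjoint form representative is chosen orthogonal to $W^2$, the simultaneous vanishing of every such integral is equivalent to $\omega_{\xi, W} = 0$. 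Consequently $Z = \{(\xi, W) \in \Gamma : [\omega_{\xi, W}] = 0 \text{ in } H^0(C_2, \omega_{C_2})/W^2\}$.

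\textbf{Step 2 ($\mathcal{D} \subset \pi_2(Z)$).} Given $W \in \mathcal{D}$ with a base point $p$, the corresponding subspace $V \subset H^{1,0}(C)$ lies in $H^0(C, \omega_C(-p))$. Under $H^1(C_2, T_{C_2}) \cong H^1(C, T_C)$, the Schiffer variation $\theta_p$ has kernel exactly $H^0(C, \omega_C(-p))$, so $\theta_p$ annihilates $V$ and $(\theta_p, W) \in \Gamma$. Proposition \ref{vanish} then yields $\omega_{\theta_p, W} = 0$, whence $(\theta_p, W) \in Z$ and $W \in \pi_2(Z)$.

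\textbf{Step 3 ($\pi_2(Z) \subset \mathcal{D}$).} I argue the contrapositive: if $W \notin \mathcal{D}$, the associated $V$ is base-point-free, and for $C$ generic the Remark following Proposition \ref{baselocus} guarantees that $\phi_V : C \to \mathbb{P}V^*$ is birational onto its image. Proposition \ref{baselocus}(2) then gives that the base locus $B_{W^2} \subset C_2$ of $|W^2|$ is finite, so the fixed part $D$ of $|W^2|$ is empty. Applying Theorem \ref{imp}(2) to $X = C_2$ (with $W^2 \neq 0$), we conclude $[\omega_{\xi, W}] \neq 0$ for every nonzero $\xi$ satisfying $\xi \cdot W = 0$. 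Therefore $\pi_2^{-1}(W) \cap Z = \emptyset$, so $W \notin \pi_2(Z)$. The conceptual hurdle is Step 1, where one must check that the functional $\alpha$ genuinely detects the full vanishing of the orthogonal representative $\omega_{\xi, W}$ and not just of some quotient thereof; once this is in hand, the two inclusions are routine applications of the two deep inputs already available, namely Proposition \ref{vanish} on one side, and the combination of Proposition \ref{baselocus}(2) with Theorem \ref{imp}(2) on the other.
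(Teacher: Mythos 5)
Your proof is correct and follows essentially the same route as the paper: Proposition \ref{vanish} applied to Schiffer variations gives $\mathcal{D}\subset\pi_{2}(Z)$, while base-point-freeness plus Proposition \ref{baselocus}(2) and Theorem \ref{imp}(2) (the paper cites ``Proposition \ref{adj}'' here, but Theorem \ref{imp}(2) is indeed the statement actually needed, so your reference is the right one) gives the reverse inclusion. Your Step 1, reducing the vanishing of the section $\alpha$ to the vanishing of the adjoint form via the definiteness of the Hermitian pairing on $H^{2,0}(C_{2})$, is a translation the paper leaves implicit and is worth spelling out as you do.
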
 
\begin{proof}
By Proposition \ref{vanish} the infinitesimal invariant vanishes on $\mathcal{D}$. 
If $W\notin\mathcal{D}$ then it has no base points.
Proposition \ref{baselocus} together with Proposition \ref{adj} then implies that for every $\xi\in\pi_{2}^{-1}(W)$ we have $\omega_{\xi,W}\neq0$.
\end{proof}
\begin{cor}
If $g\geqslant4$ the infinitesimal invariant $\delta\nu_{2}(0)$ reconstructs the curve $C$.
\end{cor}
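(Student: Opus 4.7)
The plan is to derive the Corollary directly from Theorem \ref{torelli} by showing that the variety $\mathcal{D}$ determines the canonical image of $C$ in $\mathbb{P}^{g-1}$, and hence $C$ itself.

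From $\delta\nu_{2}(0)$ one recovers the section $\alpha\in H^{0}(\Gamma,E^{*})$, its zero locus $Z$, and therefore $\mathcal{D}=\pi_{2}(Z)$ by Theorem \ref{torelli}. Under the isomorphism (\ref{iso}), $\mathcal{D}$ is identified with the incidence variety $\{L\in\mathbb{G}(g-4,\mathbb{P}^{g-1})\mid L\cap C\neq\emptyset\}$ inside the Grassmannian, where $C$ is viewed via its canonical embedding.

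To recover $C$ from $\mathcal{D}$ I would use the following Schubert-cycle characterization: for each $p\in\mathbb{P}^{g-1}$, set $\Sigma_{p}=\{L\in\mathbb{G}(g-4,\mathbb{P}^{g-1})\mid p\in L\}$, which has dimension $3(g-4)=3g-12$. The claim is that $p\in C$ if and only if $\Sigma_{p}\subset\mathcal{D}$. The forward direction is immediate, since any $(g-4)$-plane through a point of $C$ automatically meets $C$. For the converse, if $p\notin C$, parametrizing those $(g-4)$-planes through $p$ that also contain a point $q\in C$ (first choose $q\in C$, then choose a $(g-6)$-plane in $\mathbb{P}^{g-1}/\overline{pq}\cong\mathbb{P}^{g-3}$) yields a subvariety of $\Sigma_{p}$ of dimension $1+3(g-5)=3g-14$, strictly smaller than $\dim\Sigma_{p}=3g-12$; hence $\Sigma_{p}\not\subset\mathcal{D}$. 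Since $g\geq 4$ and $C$ is non-hyperelliptic, the canonical map is an embedding, and so $C$ is recovered from its canonical image. The essential input is Theorem \ref{torelli}; once $\mathcal{D}$ is in hand, the reconstruction is a routine Schubert-cycle dimension count in the Grassmannian.
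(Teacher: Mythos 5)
Your proposal is correct, and the reduction to recovering $C$ from $\mathcal{D}$ via Theorem \ref{torelli} is exactly the paper's starting point; but the reconstruction step itself is carried out differently. The paper argues by uniqueness: for $g=4$ it observes that $\mathcal{D}$ literally \emph{is} the canonical curve, and for $g\geqslant5$ it supposes a second canonical curve $C'$ has $\mathcal{D}'=\mathcal{D}$, picks $p\in C\setminus C'$, and derives a contradiction from the $2$-dimensional join $S=J(p,C')$ (every $(g-4)$-plane through $p$ would have to meet $S$ in a line, which fails for a generic such plane inside a suitable codimension-two linear section). You instead give a direct, constructive characterization $C=\{p\in\mathbb{P}^{g-1}\mid\Sigma_{p}\subset\mathcal{D}\}$ and prove the nontrivial inclusion by the dimension count $\dim(\Sigma_{p}\cap\mathcal{D})\leqslant 1+3(g-5)=3g-14<3g-12=\dim\Sigma_{p}$ for $p\notin C$; since $\Sigma_{p}$ is irreducible and $\mathcal{D}$ is closed (being the image of a projective incidence variety), $\Sigma_{p}\not\subset\mathcal{D}$. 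This count is right, and it degenerates harmlessly at $g=4$ (where $\Sigma_{p}\cap\mathcal{D}=\emptyset$ for $p\notin C$), so you get a uniform treatment of all $g\geqslant4$ where the paper splits into cases. What your route buys is an explicit formula recovering $C$ from $\mathcal{D}$ alone, without assuming the competing locus comes from another canonical curve; what the paper's route buys is avoiding any Schubert-cycle bookkeeping at the cost of a by-contradiction uniqueness statement. Both are complete; yours is arguably the cleaner derivation of the corollary as stated.
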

\begin{proof}
We have to show that one can recover the curve $C$ from $\mathcal{D}$.

Let us identify the canonical space with $\mathbb{P}^{g-1}$ and consider $C\subset\mathbb{P}^{g-1}$.
Then, under the identification $G(3, H^{1,0}(C))\cong\mathbb{G}(g-4,g-1)$ we have 
\[
\mathcal{D}=\{L\in\mathbb{G}(g-4,g-1) \ | \ L\cap C\neq\emptyset\}.
\]
Let us assume first that $g=4$.
Then, by the very definition of $\mathcal{D}$, we have that $\mathcal{D}$ is the canonical curve $C$.

Let us now consider $g\geqslant5$.
Suppose there exists a canonical curve $C^{\prime}\subset\mathbb{P}^{g-1}$ such that $\mathcal{D}^{\prime}=\mathcal{D}$, where
$\mathcal{D}^{\prime}=\{L\in\mathbb{G}(g-4,g-1) \ | \ L\cap C^{\prime}\neq\emptyset\}.$
We want to show that actually $C^{\prime}=C$.
\\
Let us suppose by contradiction that there exists a point $p\in C\setminus C^{\prime}$ and consider the join of $p$ and $C^{\prime}$, i.e.
$S=J(p,C^{\prime})=\overline{\bigcup_{q\in C^{\prime}}\overline{pq}}$; we have $\dim(S)=2$.
Let $L$ be a linear space of dimension $g-4$ passing through $p$; in particular $L\in\mathcal{D}$.
Since $\mathcal{D}=\mathcal{D}^{\prime}$, there is a point $q\in L\cap C^{\prime}$ and then $\overline{pq}\subset L\cap S$.
\\
We conclude that $S$ has the following property: for every linear space $L$ of dimension $g-4$ passing through $p$ the intersection $L\cap S$ contains a line.
But this is in contradiction with the fact that $\dim(S)=2$. 
In fact one can take two hyperplanes $H_{1},H_{2}$ passing through $p$ such that $S$ is not contained in $H_{1}$ and $S\cap H_{1}$ is not contained in 
$H_{2}$.
Then $S\cap H_{1}\cap H_{2}$ is finite and for every linear space $L\subset H_{1}\cap H_{2}$ of dimension $g-4$ passing through $p$, $L\cap S$ is finite.
\end{proof}

\section{Higher K-theory on hyperelliptic Jacobians}
\subsection{Higher Chow groups and regulators}\label{hch}
We briefly recall some basic definitions. 
We refer to \cite{gl} and \cite{v} for details. 

Let $X$ be a smooth projective variety of dimension $n$. 
An element of the group $CH^{p}(X,1)$ is represented by a formal sum 
$\sum_{i}(Z_{i},f_{i})$, where $Z_{i}$ is an irreducible subvariety of $X$ of codimension $p-1$ and $f_{i}$ are rational functions on $Z_{i}$
such that $\sum_{i}\textrm{div}(f_{i})=0$ as a codimension $p$ cycle on $X$ (relations in this group are given by the tame symbols, see \cite{gl}). 
We call \textit{higher cycles} the elements of these groups.
 There is an obvious map
\[
\alpha:CH^{p-1}(X)\otimes\mathbb{C}^{*}\rightarrow CH^{p}(X,1),
\]
which maps $(\sum_{i}Z_{i})\otimes c$ to $\sum_{i}(Z_{i},c)$. 
The elements in the image of this map are called \textit{decomposable}.
The group of \textit{indecomposable Higher Chow cycles} is by defintion the quotient $CH^{p}_{ind}(X,1)=CH^{p}(X,1)/\textrm{Im}\alpha$.
We will call \textit{indecomposable} every non trivial element of this group. 
As algebraically non trivial cycles can be detected by the Abel Jacobi map, indecomposable elements can be detected by means of another map,
 called \textit{regulator}, which we introduce below. 

Let $Z=\sum_{i}(Z_{i},f_{i})$ be an element of $CH^{p}(X,1)$. 
We can view each $f_{i}$ as a rational function $f_{i}:Z_{i}\dashrightarrow\mathbb{P}^{1}$ and 
consider $\gamma_{i}=f_{i}^{-1}([0,\infty])$, where $[0,\infty]$ denotes any path 
joining $0$ and $\infty$ on the Riemann sphere $\mathbb{P}^{1}$. Then $\gamma=\sum_{i}\gamma_{i}$ defines a $(2n-2p+1)-$chain which is 
homologically trivial \cite{gl}.
\\
Let us define the $K$-theoretic $p$-th intermediate Jacobian of $X$ as 
\[
KJ^{p}(X)=\frac{\displaystyle H^{2p-2}(X,\mathbb{C})}{\displaystyle F^{p}H^{2p-2}(X,\mathbb{C})+H^{2p-2}(X,\mathbb{Z}(p))}\cong
\frac{\displaystyle (F^{n-p+1}H^{2n-2p+2}(X,\mathbb{C}))^{*}}{\displaystyle H_{2n-2p-2}(X,\mathbb{Z}(n-p+1))}.
\]
The regulator map $R_{p}:CH^{p}(X,1)\rightarrow KJ^{p}(X)$ is defined as
\[
R_{p}(Z)=\frac{\displaystyle1}{(2\pi i)^{n-p+1}}\Big(\sum_{i}\int_{Z_{i}}log(f_{i})+2\pi i\int_{D}\Big),
\] 
where $D$ is an $(2n-2p+2)-$chain such that $\partial D=\gamma$. 
\\
Let us define the Hodge group $Hdg^{p-1}(X):=H^{2p-2}(X,\mathbb{Z})\cap H^{p-1,p-1}(X)$.
Recall the following fact \cite{gl}.
\begin{lem} 
\label{indec}
If $R_{p}(Z)(\omega)\neq0$ for some $\omega$ orthogonal to the Hodge group $Hdg^{p-1}(X)$ then $Z$ is indecomposable.
\end{lem}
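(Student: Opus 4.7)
The plan is to prove the contrapositive: every decomposable cycle $Z \in \mathrm{Im}\,\alpha$ pairs to zero against any $\omega \in F^{n-p+1}H^{2n-2p+2}(X,\mathbb{C})$ orthogonal to $\mathrm{Hdg}^{p-1}(X)$. Since $R_p(\alpha(-))$ is $\mathbb{C}$-linear on $CH^{p-1}(X)\otimes\mathbb{C}^*$, by linearity it suffices to analyze the image of a single elementary tensor $Y\otimes c$, i.e.\ the higher cycle $\alpha(Y\otimes c) = (Y,c)$ with $Y\subset X$ irreducible of codimension $p-1$ and $c\in\mathbb{C}^*$ a nonzero constant function on $Y$.

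For such a cycle, the regulator formula simplifies drastically because $f=c$ is constant. One has to set up the membrane $D$ and the chain $\gamma = f^{-1}([0,\infty])$ with some care: since $c$ is a single point of $\mathbb{P}^1$, one can perturb the path $[0,\infty]$ to avoid $c$, giving $\gamma=\emptyset$ and allowing $D=0$. With this normalization, the formula from Section~\ref{hch} reduces to
\[
R_p((Y,c))(\omega) \;=\; \frac{1}{(2\pi i)^{n-p+1}}\,\log(c)\int_{Y}\omega,
\]
where $\log(c)$ is a fixed branch (the branch ambiguity is absorbed in the quotient by $H^{2n-2p+2}(X,\mathbb{Z}(n-p+1))$, exactly the reason the target $KJ^p(X)$ is the quotient it is). The key observation is that $\int_Y\omega$ is, by the definition of the fundamental class, the Poincaré pairing $\langle [Y],\omega\rangle$ with $[Y]\in H^{2p-2}(X,\mathbb{Z})$, and because $Y$ is an algebraic subvariety, $[Y]\in H^{p-1,p-1}(X)\cap H^{2p-2}(X,\mathbb{Z}) = \mathrm{Hdg}^{p-1}(X)$.

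By hypothesis $\omega$ is orthogonal to $\mathrm{Hdg}^{p-1}(X)$ under this pairing, so $\int_Y\omega=0$ and therefore $R_p((Y,c))(\omega)=0$. Summing over the decomposition of an arbitrary element of $\mathrm{Im}\,\alpha$, we obtain $R_p(Z')(\omega)=0$ for every decomposable $Z'$. Hence if $R_p(Z)(\omega)\neq 0$, the class of $Z$ is nonzero in $CH^{p}_{\mathrm{ind}}(X,1) = CH^p(X,1)/\mathrm{Im}\,\alpha$, i.e.\ $Z$ is indecomposable.

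The main technical obstacle is the bookkeeping around the chain $\gamma$ and the membrane $D$ in the constant-function case: one has to justify that the choice of branch of $\log(c)$ and of perturbation of $[0,\infty]$ away from $c$ only affects $R_p(Z)$ by a class in $H^{2n-2p+2}(X,\mathbb{Z}(n-p+1))$, i.e.\ in the lattice that is quotiented out in $KJ^p(X)$. This is standard for the Deligne--Beilinson regulator (cf.\ \cite{gl}), but it is the step that is easy to get wrong; once it is in place, the rest is a one-line Hodge-theoretic orthogonality argument.
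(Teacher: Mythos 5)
The paper does not prove this lemma at all: it is quoted as a known fact with a citation to \cite{gl}, so there is no in-paper argument to compare against. Your proposal supplies the standard proof from that reference, and it is correct: a decomposable generator $(Y,c)$ has $\gamma=\emptyset$ for a path $[0,\infty]$ avoiding the constant $c$, the regulator collapses to $\tfrac{\log c}{(2\pi i)^{n-p+1}}\int_{Y}\omega$, and $\int_{Y}\omega$ is the Poincar\'e pairing of $\omega$ with the cycle class $[Y]\in Hdg^{p-1}(X)$, which vanishes by the orthogonality hypothesis; linearity over generators of $\mathrm{Im}\,\alpha$ and passage to the quotient $CH^{p}_{\mathrm{ind}}(X,1)$ finish the argument. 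One small remark: the bookkeeping you flag as the delicate step (branch of $\log c$, choice of path and membrane, all shifting the value only by lattice elements of $H^{2n-2p+2}(X,\mathbb{Z}(n-p+1))$) is indeed what makes $R_p$ well defined in general, but for the conclusion you need it is moot --- once $\int_{Y}\omega=0$, every branch and every admissible choice gives the value $0$ on $\omega$, so the evaluation against this particular $\omega$ is unambiguously zero. The only cosmetic slip is calling $R_p\circ\alpha$ ``$\mathbb{C}$-linear'' on $CH^{p-1}(X)\otimes\mathbb{C}^*$; it is a homomorphism of abelian groups (additive in the cycle, multiplicative-to-additive in $c$ via $\log$), which is all the reduction to elementary tensors requires.
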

As in the case of Abel-Jacobi maps, the above construction can be performed in families.
To a family of $n$-dimensional smooth projective varieties $\pi:\mathcal{X}\rightarrow B$ we can associate the family of $K$-theoretic $p$-th Jacobians
\[
\mathcal{KJ}^{p}(\mathcal{X})=\frac{\displaystyle\mathcal{H}^{2p-2}}{\displaystyle\mathcal{F}^{p}\mathcal{H}^{2p-2}+R^{2p-2}\pi_{*}\mathbb{Z}(p)}
.\]
 If $\mathcal{Z}$ is an higher cycle on $\mathcal{X}$ such that its restriction to $X_{t}$ gives an higher 
cycle $Z_{t}\in CH^{p}(X_{t},1)$ we can define a section $\tau\in H^{0}(B,\mathcal{KJ}^{p}(\mathcal{X}))$ by the rule $\tau(t)=R_{p}(Z_{t})$. 

The section $\tau$ shares the formal properties of normal functions: it is holomorphic and, for every holomorphic local lifting $\widetilde{\tau}$ we have 
$\nabla\widetilde{\tau}\in$\mbox{$\mathcal{F}^{p-1}\mathcal{H}^{2p-2}\otimes\Omega^{1}_{B}$} (cf. \cite{v}). 
These properties allows to define the infinitesimal invariant $\delta\tau$ 
in a similar way to the classical case.
\subsection{Collino cycle and symmetric products}
\label{col}
We recall first the construction due to A. Collino \cite{c97} of a cycle in the higher Chow group of hyperelliptic Jacobians. 
We will use the following notation: if $D$ is a divisor on a curve we will denote by $[D]$ its class modulo linear equivalence.
Let $C$ be an hyperelliptic curve of genus $g>1$ and $h:C\rightarrow \mathbb{P}^{1}$ its covering map. We fix two ramification points 
$p,q\in C$ and a parameter on $\mathbb{P}^{1}$ such that $h(p)=0$, \mbox{$h(q)=\infty$.}
Note that $2[p-q]=0$ in $\Pic^{0}(C)$.
We can embed $C$ into $\Pic^{1}(C)$ and consider \mbox{$C(-p)=\{[x-p] \ | \ x\in C\}\subset\Pic^{0}(C)$} and $C(-q)$ defined in a similar way. 
Call $h_{p}$ and $h_{q}$ the functions on $C(-p),C(-q)\cong C$ induced by $h$. 
Then $\textrm{div}(h_{p})=2(0-[q-p])=2(0-[p-q])$ and 
$\textrm{div}(h_{q})=2([p-q]-0)$.
\\
The cycle $Z=Z(C,p,q)=(C(-p),h_{p})+(C(-q),h_{q})\in CH^{g}(J(C),1)$ is the cycle defined and studied by A. Collino \cite{c97}. He proved that 
$Z$ is indecomposable by computing explicitly the infinitesimal invariant of the corresponding normal function.

Using symmetric products it is possible to construct other higher cycles $Z_{k}=Z_{k}(C,p,q)\in CH^{g-k+1}(J(C),1)$ for $k=2,\dots$ \mbox{$g-1$}. 
In fact let us consider the rational function on the $k-$fold symmetric product of $C$, \mbox{$r:C_{k}\dashrightarrow\mathbb{C}$} 
defined on the open subset $C_{k}\setminus\{D\ | \ D-q\textrm{ is effective}\}$ 
by $r(x_{1}+\cdots+x_{k})=\prod_{i=1}^{k}h(x_{i})$.
Using the fact that the map $u:C_{k}\to W_{k}\subset\Pic^{k}(C)$, 
$u(\sum_{i=1}^{k}x_{i})=[\sum_{i=1}^{k}x_{i}]$, has a rational inverse 
(defined on the open subset $W_{k}^{0}=\{[D]\in W_{k} \ | \ h^{0}(D)=1 \}$),
 we get a diagram
\begin{equation}
\xymatrix{
 C_{k}\ar@{-->}[r]^{r} & \mathbb{C}    \\
W_{k}\ar@{-->}[u]\ar@{-->}[ur]^{l} &
}
\end{equation}
where $l$ is the composition.
By the very definition of $l$ we have $\textrm{div}(l)=2(W_{k-1}(p)-W_{k-1}(q))$, where $W_{k-1}(p)$ and $W_{k-1}(q))$ are the images 
of $C_{k-1}+p$ and $C_{k-1}+q$ in $J(C)$.
Let us now consider the bijections $i_{p}:\Pic^{k}(C)\to\Pic^{k-1}(C)$, $i_{q}:\Pic^{k}(C)\to\Pic^{k-1}(C)$ defined by $i_{p}([D])=[D-p]$ and $i_{q}([D])=[D-q]$.
 Set $W_{k}(-p)=i_{p}(W_{k})$ and $W_{k}(-q)=i_{q}(W_{k})$.
 We have rational functions $l_{p}=l\circ i_{p}^{-1}:W_{k}(-p)\dashrightarrow\mathbb{C}$ and 
 $l_{q}=l\circ i_{q}^{-1}:W_{k}(-q)\dashrightarrow\mathbb{C}$ whose divisors are
 $\textrm{div}(l_{p})=$ \mbox{$2(W_{k-1}-W_{k-1}(p-q))$} and 
$\textrm{div}(l_{q})=2(W_{k-1}(p-q)-W_{k-1})$.
Then $Z_{k}=(W_{k}(-p),l_{p})+(W_{k}(-q),l_{q})\in CH^{g-k+1}(J(C),1)$.
\\
To simplify the notation we set $Z_{k}=(A_{k},a_{k})+(B_{k},b_{k})$ for $k=1,\dots,g-1$. 
We also omit the index when $k=1$, i.e. we set $Z_{1}=Z=(A,a)+(B,b)$. 
 \begin{teo}
If $J(C)$ is a generic hyperelliptic jacobian of dimension $3$ then $Z_{2}$ is indecomposable.
\end{teo}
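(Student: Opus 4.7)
The plan is to transplant to the $K$-theoretic setting of Section \ref{hch} the three-step strategy used for Ceresa cycles in Theorem \ref{1}. Take the Kuranishi family $\pi\colon\mathcal{C}\to B$ of the generic hyperelliptic $C$ of genus $3$, equipped with two sections tracing the chosen ramification points $p,q$. Over $B$ we obtain relative higher Chow cycles $\mathcal{Z}\in CH^{3}(\mathcal{J},1)$ and $\mathcal{Z}_{2}\in CH^{2}(\mathcal{J},1)$, and associated regulator normal functions $\tau$ and $\tau_{2}$ with infinitesimal invariants $\delta\tau(0),\delta\tau_{2}(0)$ defined as in Section \ref{hch}. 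Since $H^{4,0}(J(C))=0$, the invariant $\delta\tau_{2}(0)$ is a functional on the subspace of $H^{3,1}(J(C))\otimes T_{B,0}$ killed by the transpose Gauss-Manin, while $\delta\tau(0)$ is a functional on a subspace of $F^{1}H^{2}(J(C))\otimes T_{B,0}$.

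First I would establish a comparison formula
\[
\theta\cdot R_{2}(Z_{2})=c\cdot R_{3}(Z)\qquad\text{in }KJ^{3}(J(C))
\]
for a non-zero rational constant $c$, in complete analogy with Theorem \ref{comparison}. The chain $l_{p}^{-1}([0,\infty])+l_{q}^{-1}([0,\infty])$ bounding $\mathrm{div}(l_{p})+\mathrm{div}(l_{q})$ can be produced, modulo tame-symbol relations, as the Pontryagin product of the analogous chain attached to $Z=(A,a)+(B,b)$ with a suitable $2$-chain closely related to $C+C^{-}$; the resulting integral identity is evaluated by the same $\mu^{\ast}\theta=p_{1}^{\ast}\theta+p_{2}^{\ast}\theta+\psi$ computation performed in the proof of Theorem \ref{comparison}. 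Differentiating by Gauss-Manin and using $\nabla\mathbf{\Theta}=0$ together with Griffiths transversality for the regulator, the Leibniz argument of Proposition \ref{i.i.comp} goes through verbatim and yields
\[
\delta\tau_{2}(0)\circ(\theta\otimes\mathrm{Id})=c\cdot\delta\tau(0).
\]

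Next I would invoke the non-vanishing of $\delta\tau(0)$ established by Collino \cite{c97}, who computed via the adjoint form that $\delta\tau(0)$ does not vanish on some decomposable tensor $\omega_{1}\wedge\omega_{2}\otimes v\in H^{2,0}(J(C))\otimes H^{1}(C,T_{C})$. Hard Lefschetz provides an isomorphism $\theta\colon H^{2,0}(J(C))\to H^{3,1}(J(C))$ (both spaces are $3$-dimensional for an abelian $3$-fold), so the preceding formula forces $\delta\tau_{2}(0)$ to be non-zero on the class $(\theta\wedge\omega_{1}\wedge\omega_{2})\otimes v\in H^{3,1}(J(C))\otimes T_{B,0}$. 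Because $H^{3,1}\wedge H^{1,1}\subset H^{4,2}=0$ on an abelian $3$-fold, any class in $H^{3,1}(J(C))$ is automatically orthogonal to the Hodge group $Hdg^{1}(J(C))\subset H^{1,1}(J(C))$, and Lemma \ref{indec} then gives the indecomposability of $Z_{2}$ at the chosen curve, hence at the generic hyperelliptic curve of genus $3$ by semicontinuity of the non-vanishing of the infinitesimal invariant.

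The main obstacle is the first step: one has to track the rational functions $l_{p},l_{q}$ and their divisors carefully through the Pontryagin product, so that the tame symbol relations presenting $CH^{2}(J(C),1)$ recover the exact rational multiple of $R_{3}(Z)$ claimed, and the logarithmic contributions in the regulator formula combine consistently. Once this chain-level comparison is secured, the Gauss-Manin differentiation and the subsequent non-vanishing argument are formal transcriptions of the ones already carried out in Section \ref{ceresa} for the Ceresa case.
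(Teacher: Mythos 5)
Your overall strategy --- relate $\delta\tau_{2}$ to Collino's invariant through multiplication by $\theta$, quote his non-vanishing result, and conclude with Lemma \ref{indec} --- is the same as the paper's, but two of your steps do not survive scrutiny. The first problem is that your Hodge types are off by one throughout the second half. For $g=3$, $k=2$ the infinitesimal invariant $\delta\tau_{2}(0)$ is a functional on $\ker\big((\nabla^{0}_{2}(0))^{t}:H^{2,2}(J(C))\otimes T_{B,0}\to H^{1,3}(J(C))\big)$, not on a subspace of $H^{3,1}(J(C))\otimes T_{B,0}$; likewise Collino's non-vanishing is proved on classes $\omega\wedge\mu\otimes\zeta$ with $\omega\in H^{1,0}$, $\zeta\cdot\omega=0$ and $\mu\in\zeta\cdot H^{1,0}\subset H^{0,1}$, i.e.\ on classes of type $(1,1)$, not on $\omega_{1}\wedge\omega_{2}\in H^{2,0}$. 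This is not a cosmetic slip: the correct test class $\theta(\omega\wedge\mu)$ lies in $H^{2,2}$, and such a class is \emph{not} automatically orthogonal to the classes coming from decomposable cycles. That orthogonality is exactly where the hypothesis ``generic'' enters, via Pirola's theorem that the N\'eron--Severi group of the generic hyperelliptic Jacobian is $\mathbb{Z}\theta$, the Moonen--Zarhin fact that codimension-two Hodge classes on an abelian threefold are generated by products of divisor classes, and the primitivity of $\omega\wedge\mu$. Your argument that ``any class in $H^{3,1}$ is orthogonal to $Hdg^{1}$'' would prove indecomposability for \emph{every} hyperelliptic Jacobian of genus $3$, which should have been a warning sign.

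The second problem is that the comparison formula you yourself flag as ``the main obstacle'' is left unproved, and the analogy with Theorem \ref{comparison} is weaker than you suggest: $Z_{2}$ is not the Pontryagin product of $Z$ with $C+C^{-}$, and the chain $l_{p}^{-1}([0,\infty])$ attached to the multiplicative function $l_{p}$ on $W_{2}(-p)$ is not the Pontryagin product of the chains $h^{-1}([0,\infty])$ on the factors, so the chain-level identity would require genuine work with the tame-symbol relations and the logarithmic terms in the regulator. The paper sidesteps this entirely: it writes down Collino's integral formula for $\delta\tau_{k}(0)(\theta^{k-1}(\omega\wedge\mu)\otimes\zeta)$ directly (formula (\ref{tauk})), observes that $dF_{k}/F_{k}$ contributes $k$ copies of $dF/F$, identifies the Lie contraction with the adjoint form $\omega_{\zeta,\omega}$, and obtains the proportionality $\delta\tau_{k}(0)(\theta^{k-1}(\omega\wedge\mu)\otimes\zeta)=\tfrac{2k(g-1)!}{(2\pi i)^{k}(g-k)!}\,\delta\tau_{1}(0)(\omega\wedge\mu\otimes\zeta)$ with no comparison of regulator classes in $KJ^{p}$ at all. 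To repair your proof you would need either to carry out the chain-level comparison in full or, more economically, to replace it by this direct infinitesimal computation and then redo the final orthogonality step with the correct $(2,2)$ test class.
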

\begin{proof}
We first present some computations that are valid for every genus $g$ and every $k\leqslant g-1$.
Let us consider $\pi:\mathcal{C}\rightarrow B$ be a family of $2-$pointed hyperelliptic curves with $\pi^{-1}(0)=(C,p,q)$ and 
$j(\pi):\mathcal{J}\rightarrow B$ be corresponding 
family of Jacobians. 
Then $B$ is an open subset of the moduli space 
of hyperelliptic $2-$ pointed curves of genus $g$ and $T_{B,0}$ parametrizes first order hyperelliptic deformations of $(C,p,q)$.
We may assume the existence of two holomorphic sections $p,q:B\rightarrow\mathcal{C}$ such that $p(0)=p$ and $q(0)=q$.
Since the deformation is hyperelliptic there is a morphism $H:\mathcal{C}\rightarrow\mathbb{P}^{1}$ such that 
$h_{t}=H_{|C_{t}}:C_{t}\rightarrow\mathbb{P}^{1}$ is the covering map of the hyperelliptic curve $C_{t}$.
We also have for every $k$ a relative cycle $\mathcal{Z}_{k}=(\mathcal{A}_{k},F_{k})+
(\mathcal{B}_{k},G_{k})$ which restricts to $Z_{k}(t)=Z_{k}(C_{t},p(t),q(t))$ on $J(C)$. Here $F_{k},G_{k}$ are rational functons on 
$\mathcal{A}_{k}$ and $\mathcal{B}_{k}$ respectively.
Let us define the normal functions $\tau_{k}\in H^{0}(B,\mathcal{JK}^{g-k+1}(\mathcal{J}))$ by $\tau_{k}(t)=R_{g-k+1}(Z_{k}(t))$.
\\
We will show that $\delta\tau_{k}(0)\neq0$; 
this implies that $\tau_{k}$ is not a torsion section.
We then show that $Z_{2}$ is indecomposable when $g=3$ using Lemma \ref{indec}.

Note the commutative diagram
\begin{equation}
\label{infinitesimal}
\xymatrix{
 H^{1,1}(J(C))\ar[rr]^{\hspace{.4cm}(\nabla^{g-2}_{g}(0))^{t}}\otimes T_{B,0}\ar[d]^{\theta^{k-1}\otimes\textit{Id}} & &
H^{0,2}(J(C))\ar[d]^{\theta^{k-1}} \\
H^{k,k}(J(C))\otimes T_{B,0}\ar[rr]^{\hspace{.5cm}(\nabla^{g-k-1}_{g-k+1}(0))^{t}} & &
H^{k-1,k+1}(J(C))
.}
\end{equation}
By definition  $\delta\tau_{1}(0)\in(\ker\nabla^{g-2}_{g}(0)^{t})^{*}$ and 
\mbox{$\delta\tau_{k}(0)\in(\ker\nabla^{g-k-1}_{g-k+1}(0)^{t})^{*}$}.
Let us fix a non-zero $\zeta\in T_{B,0}$ and consider the associated first order deformation $\mathcal{J}_{\Delta}\rightarrow\Delta$, where 
$\Delta=\textrm{Spec}\frac{\mathbb{C}[t]}{(t^{2})}$. By pull-back, we can also suppose that $B$ is a small disc having tangent
 direction $\zeta$ at $0$. In particular $T_{B,0}=\langle\frac{\partial}{\partial t}\rangle\cong\langle\zeta\rangle$.
  \\
Take $\omega\in H^{1,0}(J(C))$ such that $\zeta\cdot\omega=0$
  and $\mu\in\zeta\cdot H^{1,0}(J(C))$. 
 Then $\omega\wedge\mu\in\ker(\nabla^{g-2}_{g}(0)^{t})$ and $\theta^{k-1}(\omega\wedge\mu)\in\ker(\nabla^{g-k-1}_{g-k+1}(0)^{t})$.
We now follow the argument of Collino. 
We denote by $\mu_{\Delta}$ the pullback to $\mathcal{J}_{\Delta}$ of the trivial lift of $\mu$ 
to $J(C)\times B$ (recall that this is diffeomorphic to $\mathcal{J}$).
 It is proved in \cite[pp. 405-406]{c97} that 
\begin{equation}
\label{tau1}
\delta\tau_{1}(0)(\omega\wedge\mu\otimes\zeta)=\int_{A}\mu_{\Delta}\wedge L_{\zeta}(\alpha_{\Delta}\wedge dF/F)+
\int_{B}\mu_{\Delta}\wedge L_{\zeta}(\alpha_{\Delta}\wedge dG/G).
\end{equation}
In the formula above $\alpha$ is a $C^{\infty}$ closed 1-form on $\mathcal{J}$ such that (i) its pullback $\alpha_{\Delta}$ to $\mathcal{J}_{\Delta}$ is holomorphic, 
(ii) $\alpha$ and the trivial lift of $\omega$ to $J(C)\times B$ coincides when pulled-back to $\mathcal{J}_{\Delta}$; 
$L_{\zeta}(\alpha_{\Delta}\wedge dF/F)=i(\frac{\partial}{\partial t})(\alpha_{\Delta}\wedge dF/F)$ is the Lie-contraction. 
The same computation of Collino gives
\begin{equation}
\label{tauk}
(2\pi i)^{k}\delta\tau_{k}(0)(\theta^{k-1}(\omega\wedge\mu)\otimes\zeta)=\int_{A_{k}}\theta^{k-1}\mu_{\Delta}\wedge L_{\zeta}(\alpha_{\Delta}\wedge dF_{k}/F_{k})+
\int_{B_{k}}\theta^{k-1}\mu_{\Delta}\wedge L_{\zeta}(\alpha_{\Delta}\wedge dG_{k}/G_{k}).
\end{equation}
Recalling that $F_{k},G_{k}$ are essentially the products of $F$ and $G$ ($k$ times), we find
\begin{equation}
\label{ }
(2\pi i)^{k}\delta\tau_{k}(0)(\theta^{k-1}(\omega\wedge\mu)\otimes\zeta)=k\int_{A_{k}}\theta^{k-1}\mu_{\Delta}\wedge L_{\zeta}(\alpha_{\Delta}\wedge dF/F)+
k\int_{B_{k}}\theta^{k-1}\mu_{\Delta}\wedge L_{\zeta}(\alpha_{\Delta}\wedge dG/G)
.
\end{equation} 
The term $L_{\zeta}(\alpha_{\Delta}\wedge dF/F)$ is by definition the adjoint form $\omega_{\zeta,\omega,dh/h}\in H^{0}(\omega_{C}(p+q))$
 and the same holds for $L_{\zeta}(\alpha_{\Delta}\wedge dG/G)$. 
Denoting by $\omega_{\zeta,\omega}$ its holomorphic part under the isomorphism $H^{0}(\omega_{C}(p+q))\cong H^{0}(\omega_{C})\oplus\langle dh/h\rangle$ 
we find (cf. \cite[p. 406]{c97})
\begin{equation}
\label{ }
\delta\tau_{1}(0)(\omega\wedge\mu\otimes\zeta)=2\int_{C}\mu\wedge\omega_{\zeta,\omega}=
\frac{\displaystyle 2}{\displaystyle (g-1)!}\int_{J(C)}\theta^{g-1}(\mu\wedge\omega_{\zeta,\omega})
\end{equation}
and 
\[
\delta\tau_{k}(0)(\theta^{k-1}(\omega\wedge\mu)\otimes\zeta)=
\frac{\displaystyle 2k}{\displaystyle (2\pi i)^{k}(g-k)!}\int_{J(C)}\theta^{g-1}(\mu\wedge\omega_{\zeta,\omega})= 
\frac{\displaystyle 2k(g-1)!}{\displaystyle (2\pi i)^{k}(g-k)!}\delta\tau_{1}(0)(\omega\wedge\mu\otimes\zeta).
\]
Collino proves that is possible to choose $\zeta,\omega,\mu$ such that $\omega\wedge\mu$ and $\delta\tau_{1}(\omega\wedge\mu\otimes\zeta)\neq0$.
Then for the same choices $\delta\tau_{k}(\theta^{k-1}(\omega\wedge\mu)\otimes\zeta)\neq0$.
\\
Let us now suppose that we are in the hypothesis of the Theorem.
Then since $J(C)$ is generic the Neron-Severi group, or equivalently the Hodge group of divisors, is generated by $\theta$ \cite{pir}.
Also the Hodge classes in codimension two  of any abelian threefold are intersection of those in codimension one (cf. \cite[Introduction]{mz}).  
We conclude that $Hdg^{2}(J(C))$ is generated by $\theta^{2}$ and then $\theta(\omega\wedge\mu)$ lies in its orthogonal space. 
The result follows from Lemma \ref{indec}.
\end{proof}

\textbf{Aknowledgements} I would like to thank G.P. Pirola for useful conversations on the subject and for his help with the results of Section
\ref{tecnico}. 
\\
I also thank A. Collino for letting me reproduce the proof of Lemma \ref{collino} due to him and for having pointed me out a mistake in the first
version of the paper.
\\  
Finally, I wish to thank Ramesh Sreekantan for a useful conversation on higher Chow cycles and for the correspondence we had.

\bigskip

\noindent Emanuele\ Raviolo\\
Dipartimento di Matematica, Universit\`a di Pavia \\
via Ferrata 1 \\
 27100 Pavia - Italy \\
 emanuele.raviolo@unipv.it

\end{document}